\documentclass{amsart}
\usepackage{amssymb,color}
\usepackage{tikz}
\theoremstyle{plain}
\newtheorem{theorem}{Theorem}[section]
\newtheorem{cor}{Corollary}[section]
\newtheorem{lem}{Lemma}[section]
\newtheorem{prop}{Proposition}[section]

\theoremstyle{definition}

\newtheorem{exmp}{Example}[section]
\theoremstyle{remark}
\newtheorem{rem}{Remark}[section]

\allowdisplaybreaks

\long\def\symbolfootnote[#1]#2{\begingroup
	\def\thefootnote{\fnsymbol{footnote}}\footnote[#1]{#2}\endgroup}
\begin{document}
	\title[Nonlinear nonhomogeneous Robin problems]
	{Nonlinear nonhomogeneous Robin problems with almost critical and partially concave reaction}
	\author[N.S. Papageorgiou, D.D. Repov\v{s}, and C. Vetro]{N.S. Papageorgiou, D.D. Repov\v{s}, and C. Vetro}
	\address[N.S. Papageorgiou]{National Technical University, Zografou campus, 15780 Athens, Greece
	\& Institute of Mathematics, Physics and Mechanics,
		1000 Ljubljana, Slovenia}
	\email{npapg@math.ntua.gr}
	\address[D.D. Repov\v{s}]{Faculty of Education and Faculty of Mathematics and Physics,
		University of Ljubljana
		\& Institute of Mathematics, Physics and Mechanics,
		1000 Ljubljana, Slovenia}
	\email{dusan.repovs@guest.arnes.si}
	\address[C. Vetro]{Department of Mathematics and Computer Science, University of Palermo, Via Archirafi 34, 90123 Palermo, Italy}
	\email{calogero.vetro@unipa.it}	
	\thanks{{\em 2010 Mathematics Subject Classification: } 35J20, 35J60.}
	\keywords{Competition phenomena, nonlinear regularity, nonlinear maximum principle, strong comparison principle, bifurcation-type result, almost critical growth.}	
	\maketitle	
	\begin{abstract}
		We consider a nonlinear Robin problem driven by a nonhomogeneous differential operator, with reaction which exhibits the competition of two Carath\'eodory terms. One is parametric, $(p-1)$-sublinear with a partially concave nonlinearity near zero. The other is $(p-1)$-superlinear and has almost critical growth. Exploiting the special geometry of the problem, we prove a bifurcation-type result, describing the changes in the set of positive solutions as the parameter $\lambda>0$ varies.
	\end{abstract}
	
	\section{Introduction}
	Let $\Omega \subseteq \mathbb{R}^N$ be a bounded domain with a $C^2$-boundary $\partial \Omega$. In this paper we study the following parametric Robin problem
	\begin{equation}\tag{$P_\lambda$}\label{PL} 
	\begin{cases}
	- \mbox{div}\, a(\nabla u(z)) + \xi(z) |u(z)|^{p-2}u(z)= \lambda f(z,u(z)) + g(z,u(z))  \mbox{ in }\Omega,& \\
	\quad \dfrac{\partial u}{\partial n_a} + \beta (z)|u|^{p-2} u=0  \mbox{ on }\partial \Omega, \, u>0.
	\end{cases}
	\end{equation} 
	In this problem $a: \mathbb{R}^N \to \mathbb{R}^N$ is a continuous, strictly monotone (hence also maximal monotone) map which satisfies certain other regularity and growth conditions listed in hypotheses $H(a)$ below. These conditions are not restrictive and incorporate in our framework many differential operators of interest. We point out that the differential operator $u \to \mbox{div}\, a(\nabla u)$ is not homogeneous and this is a source of difficulties in the analysis of problem \eqref{PL}. The potential function is $\xi \in L^\infty(\Omega)$, $\xi \geq 0$. In the reaction (the right hand side of the equation) $\lambda >0$ is a parameter and $f(z,x)$, $g(z,x)$ are Carath\'{e}odory functions (that is, for all $x \in \mathbb{R}$, $z \to f(z,x), g(z,x)$ are measurable, while for a.a. $z \in \Omega$, $x \to f(z,x), g(z,x)$ are continuous). We assume that $f(z,\cdot)$ is $(p-1)$-superlinear near $0^+$ partially in $z \in \Omega$. So, near zero we have a partially concave nonlinearity and this complicates the geometry of the problem near the origin. Near $+\infty$ for a.a. $z \in \Omega$, $f(z,\cdot)$ is strictly $(p-1)$-sublinear, while for a.a. $z \in \Omega$, $g(z,\cdot)$ exhibits almost critical growth, a fact that further complicates the geometry of the problem, since the embedding of $W^{1,p}(\Omega)$ into $L^{p^\ast}(\Omega)$ is not compact (recall that $p^\ast$ denotes the critical Sobolev exponent corresponding to $1<p<+\infty$, defined by $$p^\ast=\begin{cases}\dfrac{Np}{n-p} & \mbox{if } p<N,\\+\infty & \mbox{if } p \geq N.\end{cases}$$ 
	
	In the boundary condition $\dfrac{\partial u}{\partial n_a}$ denotes the  conormal derivative corresponding to the map $a(\cdot)$ and defined by extension on $W^{1,p}(\Omega)$ of the map $$C^1(\overline{\Omega})\ni u \to (a(\nabla u), n)_{\mathbb{R}^N},$$ with $n(\cdot)$ being the outward unit normal on $\partial \Omega$.  The boundary coefficient is $\beta \in C^{0,\alpha}(\partial \Omega)$ with $\alpha \in (0,1)$ and $\beta \geq 0$. When  $\beta \equiv 0$, we recover the usual Neumann problem. 
	
	We study the nonexistence, existence and multiplicity of positive solutions as the parameter $\lambda>0$ varies. Our main result is a ``bifurcation-type'' theorem, which produces a critical parameter $\lambda^\ast>0$ such that
	\begin{itemize}
		\item[$\bullet$] for all $\lambda \in (0, \lambda^\ast)$ problem \eqref{PL} has at least two positive smooth solutions;
		\item[$\bullet$] for all $\lambda=\lambda^\ast$ problem \eqref{PL} has at least one positive solution;
		\item[$\bullet$] for all $\lambda>\lambda^\ast$ problem \eqref{PL} has no positive solutions.			 
	\end{itemize}
	
	Moreover, we show that we can have positive solutions $u_\lambda \in C^1(\overline{\Omega})$ such that 
	$$\|u_\lambda\|_{C^1(\overline{\Omega})}\to 0 \quad \mbox{as }\lambda \to 0^+.$$
	
	Our approach uses critical point theory combined with suitable truncation and comparison techniques to exploit the particular geometry of the problem. 
	
	The study of problems in which in the reaction we have competition phenomena between nonlinearities of different nature (``concave-convex'' problems), was initiated by the seminal paper of Ambrosetti-Brezis-Cerami \cite{Ref2} for semilinear equations driven by the Dirichlet Laplacian. Their work was extended to equations driven by the Dirichlet $p$-Laplacian
	 by Garc\'{\i}a-Azorero-Peral Alonso-Manfredi \cite{Ref5} and Guo-Zhang \cite{Ref9}.
	 In the aforementioned works, the reaction has the following special form
	$$\lambda x^{q-1} +x^{r-1} \quad \mbox{for all $x \geq0$ with $1<q<p<r<p^\ast$}.$$
	
	More general reactions were assumed by de Figueiredo-Gossez-Ubilla \cite{Ref4}, Gasi\'nski-Papageorgiou \cite{Ref7}, Hu-Papageorgiou \cite{Ref10}, and Papageorgiou-Vetro \cite{Ref22} (Dirichlet problems). For nonlinear Neumann and Robin problems we mention
	 related works of Molica Bisci-R\v{a}dulescu \cite{RefG1,RefG2}, Molica Bisci-Repov\v{s} \cite{RefG3,RefG4}, Papageorgiou-R\v{a}dulescu \cite{Ref18,Ref19}, and Papageorgiou-R\v{a}dulescu-Repov\v{s} \cite{Ref21}.	
	
	\section{Mathematical Background - Hypotheses}\label{S2}
	
	Let $X$ be a Banach space. By $X^\ast$ we denote the topological dual of $X$ and by $\langle\cdot,\cdot \rangle$ we denote the duality brackets for the pair $(X^\ast,X)$. Given $\varphi \in C^1(X, \mathbb{R})$, we say that $\varphi$ satisfies the ``Cerami condition'' (the ``$C$-condition'' for short), if the following property holds:
	
	\smallskip
	
	``Every sequence $\{u_n\}_{n \in \mathbb{N}} \subseteq X$ such that $\{\varphi(u_n)\}_{n \in \mathbb{N}} \subseteq \mathbb{R}$ is bounded and $(1 + \|u_n\|_X) \varphi'(u_n) \to 0 
	$ in $X^\ast$  as $ n \to +\infty,$
	admits a strongly convergent subsequence''. 
	
	\smallskip	
	
	This is a compactness-type condition on the functional $\varphi$ and it leads to a deformation theorem from which one can derive the minimax theory of the critical values of $\varphi$. One of the main results in this theory is the so-called ``Mountain Pass Theorem'' which we recall below.
	\begin{theorem}\label{T1}
		If $X$ is a Banach space, $\varphi \in C^1(X, \mathbb{R})$ satisfies the $C$-condition,  $u_0, u_1 \in X$, $\|u_1-u_0\|_X >\rho$, $\max\{\varphi(u_0),\varphi(u_1)\} < \inf\{\varphi(u): \|u-u_0\|_X =\rho \} =m_\rho ,$ and $c= \inf_{\gamma \in \Gamma} \max_{0\leq t \leq 1} \varphi(\gamma(t))$ with $ \Gamma = \{\gamma \in C([0,1],X): \gamma(0)=u_0, \gamma(1)=u_1\},$
		then $c \geq m_\rho$ and $c$ is a critical value of $\varphi$  $($that is, there exists $\widehat{u} \in X$ such that $\varphi'(\widehat{u})=0$, $\varphi(\widehat{u})=c \geq m_\rho)$.
	\end{theorem}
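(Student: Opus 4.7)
The plan is to establish the two claims separately: first, the lower bound $c \geq m_\rho$, and second, the fact that $c$ is a critical value, by the standard deformation argument.

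For the lower bound, I would observe that for any $\gamma \in \Gamma$ the continuous function $t \mapsto \|\gamma(t) - u_0\|_X$ satisfies $\|\gamma(0)-u_0\|_X = 0$ and $\|\gamma(1)-u_0\|_X = \|u_1 - u_0\|_X > \rho$. By the intermediate value theorem there exists $t_\gamma \in (0,1)$ with $\|\gamma(t_\gamma) - u_0\|_X = \rho$, and hence $\max_{0 \leq t \leq 1} \varphi(\gamma(t)) \geq \varphi(\gamma(t_\gamma)) \geq m_\rho$. Taking the infimum over $\gamma$ gives $c \geq m_\rho$. Note that $\Gamma \neq \emptyset$ since $t \mapsto (1-t)u_0 + tu_1$ is an admissible path, so $c < +\infty$.

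For the criticality of $c$, I would argue by contradiction, assuming that the critical set $K_c = \{u \in X : \varphi'(u) = 0,\ \varphi(u) = c\}$ is empty. The main technical tool is a quantitative deformation lemma adapted to the Cerami condition: using that $\varphi$ satisfies the $C$-condition and $K_c = \emptyset$, there exist $\varepsilon_0 \in (0, (c - \max\{\varphi(u_0),\varphi(u_1)\})/2)$ and a continuous deformation $\eta : [0,1] \times X \to X$ such that, for some $\varepsilon \in (0, \varepsilon_0)$:
\begin{itemize}
\item[$(i)$] $\eta(0, \cdot) = \mathrm{id}_X$;
\item[$(ii)$] $\eta(s, u) = u$ whenever $|\varphi(u) - c| \geq \varepsilon_0$;
\item[$(iii)$] $\varphi(\eta(1, u)) \leq c - \varepsilon$ for every $u$ with $\varphi(u) \leq c + \varepsilon$;
\item[$(iv)$] $s \mapsto \varphi(\eta(s, u))$ is nonincreasing for each $u \in X$.
\end{itemize}
Granting this deformation, I would pick a near-optimal path $\gamma \in \Gamma$ with $\max_{t} \varphi(\gamma(t)) \leq c + \varepsilon$; by the choice of $\varepsilon_0$ we have $\varphi(u_i) < c - \varepsilon_0$ for $i=0,1$, so property $(ii)$ fixes the endpoints and the deformed curve $\tilde\gamma(t) = \eta(1, \gamma(t))$ still lies in $\Gamma$. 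Property $(iii)$ then gives $\max_t \varphi(\tilde\gamma(t)) \leq c - \varepsilon$, contradicting the definition of $c$ as an infimum.

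The hard part—and the only nonroutine step—is constructing the deformation $\eta$ under the Cerami (rather than Palais--Smale) condition. The Palais--Smale version follows from a pseudo-gradient flow whose vector field $V(u)$ satisfies $\|V(u)\|_X \leq 1$ and $\langle \varphi'(u), V(u)\rangle \leq -\tfrac12\|\varphi'(u)\|_{X^\ast}$ on a neighborhood of the level $c$; under the weaker Cerami hypothesis one instead uses a locally Lipschitz pseudo-gradient vector field with the rescaling $\tilde V(u) = (1 + \|u\|_X)V(u)$ (so that the relevant lower bound becomes $(1+\|u\|_X)\|\varphi'(u)\|_{X^\ast}$, which is precisely the quantity controlled by the Cerami condition) and verifies that the associated flow is defined for all times $s \in [0,1]$ near the level set. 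Once this deformation is in hand, the contradiction scheme above closes the proof.
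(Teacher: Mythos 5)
The paper does not prove this statement: Theorem \ref{T1} is recalled as classical background from critical point theory (the Mountain Pass Theorem under the Cerami condition), so there is no internal proof to compare against. Judged on its own merits, your proposal is the standard and correct argument. The lower bound $c \geq m_\rho$ via the intermediate value theorem applied to $t \mapsto \|\gamma(t)-u_0\|_X$ is exactly right, and the contradiction scheme using a quantitative deformation lemma at the level $c$ (with $\varepsilon_0$ chosen small enough that property $(ii)$ fixes the endpoints, which is legitimate precisely because of the strict inequality $\max\{\varphi(u_0),\varphi(u_1)\} < m_\rho \leq c$) is the canonical route. You also correctly identify the only nonroutine ingredient, namely that the deformation must be built under the Cerami condition rather than Palais--Smale, and the $(1+\|u\|_X)$-weighted pseudo-gradient field is the right device. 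One point deserves more care than your sketch gives it: the rescaled field $\widetilde{V}(u)=(1+\|u\|_X)V(u)$ is unbounded, so global-in-time existence of the flow on the relevant sublevel sets is not automatic and must be argued (e.g., by a Gronwall estimate showing $\|\eta(s,u)\|_X$ grows at most exponentially, or by the usual truncation/normalization of the field away from the level strip). This is a standard but genuine verification, not a formality; with it supplied, the proof is complete.
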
	
	
	Consider a function $\vartheta \in C^1(0, \infty)$, $\vartheta(t)>0$ for all $t>0$, which satisfies
	\begin{equation}
	\label{eq1} 0< \widehat{c}\leq \frac{\vartheta'(t)t}{\vartheta(t)}\leq c_0  \mbox{ and }   c_1 t^{p-1} \leq \vartheta(t) \leq c_2(t^{\tau-1}+t^{p-1}) \quad  \mbox{for all $t>0$},  
	\end{equation}
	with $0<c_1,c_2$ and $1 \leq \tau <p< +\infty$.	
	
	Then the hypotheses on the map $y \to a(y)$ involved in the differential operator of problem \eqref{PL} are the following: 
	\medskip
	
	\noindent $H(a)$: $a(y)=a_0(|y|)y$ for all $y \in \mathbb{R}^N$ with $a_0(t)>0$ for all $t>0$ and
	\begin{itemize}
		\item[$(i)$] $a_0 \in C^1(0,\infty)$, $t \to a_0(t)t$ is strictly increasing on $(0,+\infty)$, $a_0(t)t \to 0^+$ as $t \to 0^+$ and $\lim\limits_{t \to 0^+}\dfrac{a_0'(t)t}{a_0(t)}>-1;$
		\item[$(ii)$] there exists $c_3>0$ such that $|\nabla a(y)| \leq c_3 \dfrac{\vartheta(|y|)}{|y|}$ for all  $y \in \mathbb{R}^N\setminus \{0\}$;
		\item[$(iii)$] $(\nabla a (y) \xi ,\xi)_{\mathbb{R}^N} \geq \dfrac{\vartheta(|y|)}{|y|}|\xi |^2$ for all $y \in \mathbb{R}^N\setminus \{0\}$,  $\xi \in \mathbb{R}^N$;
		\item[$(iv)$] If $\widehat{G}_0(t)= \int_0^t a_0(s)s ds$ for all $t>0$, then there exists $q \in (1,p)$ such that \begin{align*}& \limsup_{t \to 0^+} \dfrac{\widehat{G}_0(t)}{t^q} \leq c_\ast \mbox{ with $c_\ast >0$,}\\ & p\,\widehat{G}_0(t)-a_0(t)t^2 \geq0 \mbox{ for all } t \geq 0.\end{align*}
	\end{itemize}	
	
	\begin{rem} Conditions $H(a) \, (i), (ii),(iii)$ are dictated by the nonlinear regularity theory of Lieberman \cite{Ref11} (p. 320) and the nonlinear maximum principle of Pucci-Serrin \cite{Ref23} (pp. 111, 120). These conditions were first used by Papageorgiou-R\v{a}dulescu \cite{Ref16,Ref17}. Condition $H(a) \, (iv)$ serves the needs of our problem, but it is mild and it is satisfied in all cases of interest (see the examples below).
	\end{rem}
	
	These conditions imply that $t \to \widehat{G}_0(t)= \int_0^t a_0(s)s ds$ is strictly convex and strictly increasing. We set $\widehat{G}(y)=\widehat{G}_0(|y|)$ for all $y \in \mathbb{R}^N$. We have that $G(\cdot)$ is convex, $\widehat{G}(0)=0$, and
	$$\nabla \widehat{G}(0)=0, \quad \nabla \widehat{G}(y)=\widehat{G}_0'(|y|) \frac{y}{|y|}=a_0(|y|)y=a(y) \quad \mbox{for all } y \in \mathbb{R}^N\setminus \{0\}.$$
	So, $\widehat{G}(\cdot)$ is the primitive of the map $a(\cdot)$ and on account of the convexity of $\widehat{G}(\cdot)$ and since $\widehat{G}(0)=0$, we have
	\begin{equation}
	\label{eq2}\widehat{G}(y) \leq (a(y),y)_{\mathbb{R}^N} \quad \mbox{for all } y \in \mathbb{R}^N.\end{equation}
	The next lemma summarizes the main properties of the map $a(\cdot)$ and is a straightforward consequence of \eqref{eq1} and hypotheses $H(a) \, (i),(ii),(iii)$.
	
	\begin{lem}\label{Lem2}
		If hypotheses $H(a) \, (i),(ii),(iii)$ hold, then
		\begin{itemize}
			\item[(a)] $y \to a(y)$ is strictly monotone and continuous (thus also maximal monotone);
			\item[(b)] $|a(y)| \leq c_4 (|y|^{\tau-1}+|y|^{p-1})$ for all $y \in \mathbb{R}^N$ and
			 some $c_4>0$;
			\item[(c)] $(a(y),y)_{\mathbb{R}^N} \geq \dfrac{c_1}{p-1}|y|^p$ for all $y \in \mathbb{R}^N$. 
		\end{itemize}
	\end{lem}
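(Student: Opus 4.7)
The plan is to derive the three assertions from the radial representation $a(y)=a_0(|y|)y$ coupled with a scalar integration along the segment $[0,y]$, which is permissible because $H(a)(i)$ gives $a_0(t)t\to 0^+$, hence $a(0)=0$.

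For part (a), continuity of $a$ on $\mathbb{R}^N\setminus\{0\}$ is immediate from $a_0\in C^1(0,\infty)$, while $|a(y)|=a_0(|y|)|y|\to 0$ extends continuity to the origin. For strict monotonicity, I would fix $y_1\ne y_2$ and write
$$(a(y_1)-a(y_2),y_1-y_2)_{\mathbb{R}^N}=\int_0^1\bigl(\nabla a(y_2+t(y_1-y_2))(y_1-y_2),y_1-y_2\bigr)_{\mathbb{R}^N}\,dt;$$
$H(a)(iii)$ bounds the integrand below by $(\vartheta(|y_t|)/|y_t|)|y_1-y_2|^2$, which is strictly positive off the at most one value of $t\in[0,1]$ for which $y_t:=y_2+t(y_1-y_2)$ vanishes, so the integral is strictly positive. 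A continuous, single-valued, monotone operator on the finite-dimensional space $\mathbb{R}^N$ is automatically maximal monotone, closing (a).

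For (b) and (c), the fundamental theorem of calculus provides $a(y)=\int_0^1\nabla a(ty)y\,dt$ for $y\neq 0$. Combining this with $H(a)(ii)$ and the substitution $s=t|y|$ gives
$$|a(y)|\le c_3\int_0^{|y|}\frac{\vartheta(s)}{s}\,ds,$$
and plugging in the upper estimate $\vartheta(s)\le c_2(s^{\tau-1}+s^{p-1})$ from \eqref{eq1} and integrating term by term produces (b). For (c), inserting $\xi=y$ in $H(a)(iii)$ inside the same identity and performing the same change of variables yields
$$(a(y),y)_{\mathbb{R}^N}\ge |y|\int_0^{|y|}\frac{\vartheta(s)}{s}\,ds\ge c_1|y|\int_0^{|y|}s^{p-2}\,ds=\frac{c_1}{p-1}|y|^p.$$

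The only delicate point is the convergence of $\int_0^{|y|}\vartheta(s)/s\,ds$ and the absolute continuity of $t\mapsto a(ty)$ at $t=0$, since $\nabla a$ is defined only on $\mathbb{R}^N\setminus\{0\}$. The lower bound $\widehat{c}\le\vartheta'(t)t/\vartheta(t)$ in \eqref{eq1}, once integrated, forces $\vartheta(s)\le C s^{\widehat{c}}$ for small $s$, so $\vartheta(s)/s$ is indeed integrable near $0$; in tandem, the condition $\lim_{t\to 0^+}a_0'(t)t/a_0(t)>-1$ in $H(a)(i)$ ensures that $t\mapsto a(ty)$ is absolutely continuous on $[0,1]$, legitimating the integral representation used throughout. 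Once this is in place, the rest is bookkeeping.
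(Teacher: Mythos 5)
The paper offers no proof of this lemma; it simply declares it a straightforward consequence of \eqref{eq1} and $H(a)\,(i)$--$(iii)$, so there is no argument to compare yours against line by line. Your write-up supplies the standard argument and it is essentially correct: the integral representations $a(y)=\int_0^1\nabla a(ty)y\,dt$ and $(a(y_1)-a(y_2),y_1-y_2)=\int_0^1(\nabla a(y_t)(y_1-y_2),y_1-y_2)\,dt$ are legitimate once you know $a$ is continuous at $0$ with $a(0)=0$ and that $\vartheta(s)/s$ is integrable near $0$, and your observation that integrating $\widehat{c}\le \vartheta'(t)t/\vartheta(t)$ yields $\vartheta(s)\le \vartheta(1)s^{\widehat{c}}$ for $s\le 1$ is exactly the point that makes this work; parts (a) and (c) then follow as you say.

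One small blemish in (b): the phrase ``integrating term by term'' the bound $\vartheta(s)/s\le c_2(s^{\tau-2}+s^{p-2})$ breaks down at the admissible endpoint $\tau=1$, since $\int_0^{|y|}s^{-1}\,ds$ diverges. The fix is already in your own preliminaries: for $|y|\le 1$ use $\int_0^{|y|}\vartheta(s)s^{-1}\,ds\le \vartheta(1)\int_0^{1}s^{\widehat c-1}\,ds<\infty$, which is dominated by a constant and hence by $c_4(|y|^{\tau-1}+|y|^{p-1})$; for $|y|\ge 1$ split the integral at $s=1$ and apply the term-by-term estimate only on $[1,|y|]$, where $\int_1^{|y|}s^{-1}\,ds=\ln|y|\le \frac{1}{p-1}|y|^{p-1}$. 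Also, the absolute continuity of $t\mapsto a(ty)$ at $t=0$ follows directly from the continuity of $a$ at $0$ together with the integrability of the derivative bound $c_3\vartheta(t|y|)/(t|y|)\cdot|y|$ (again via the $s^{\widehat c}$ estimate); the condition $\lim_{t\to 0^+}a_0'(t)t/a_0(t)>-1$ is not what carries that step. Neither point affects the validity of the overall argument.
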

	This lemma and \eqref{eq2} lead to the following growth estimates for the primitive $\widehat{G}(\cdot)$.
	
	\begin{cor}\label{cor3}
		If hypotheses $H(a) \, (i),(ii),(iii)$ hold, then  
		$\dfrac{c_1}{p(p-1)}|y|^p \leq \widehat{G}(y) \leq c_5 (|y|^\tau+ |y|^p)$ for all $y \in \mathbb{R}^N$
		and   some $c_5>0.$
	\end{cor}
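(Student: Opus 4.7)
The statement has two bounds to establish, and each is a short consequence of Lemma~\ref{Lem2} once one observes that $\widehat{G}$ is built from $a(\cdot)$ by radial integration. My plan is to handle them separately.

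For the upper bound, I would use inequality \eqref{eq2}, which already furnishes $\widehat{G}(y)\leq(a(y),y)_{\mathbb{R}^N}$ thanks to the convexity of $\widehat{G}$ together with $\widehat{G}(0)=0$ and $\nabla\widehat{G}(0)=0$. Applying Cauchy--Schwarz and then part (b) of Lemma~\ref{Lem2} gives
\[
(a(y),y)_{\mathbb{R}^N}\leq |a(y)|\,|y|\leq c_4\bigl(|y|^{\tau-1}+|y|^{p-1}\bigr)|y|=c_4\bigl(|y|^{\tau}+|y|^{p}\bigr),
\]
which yields the claimed upper bound with $c_5\geq c_4$.

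For the lower bound, I would exploit the radial structure $\widehat{G}(y)=\widehat{G}_0(|y|)=\int_0^{|y|}a_0(s)s\,ds$. Testing Lemma~\ref{Lem2}(c) against a vector of the form $y=s\,e$ with $|e|=1$ converts the coercivity estimate into a scalar inequality: $(a(se),se)_{\mathbb{R}^N}=a_0(s)s^2\geq \frac{c_1}{p-1}s^p$, so that $a_0(s)s\geq \frac{c_1}{p-1}s^{p-1}$ for all $s>0$. Integrating from $0$ to $t=|y|$ yields
\[
\widehat{G}(y)=\widehat{G}_0(|y|)\geq \frac{c_1}{p-1}\int_0^{|y|}s^{p-1}\,ds=\frac{c_1}{p(p-1)}|y|^{p},
\]
which is the desired lower bound.

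There is no genuine obstacle here; the statement is really a bookkeeping corollary of Lemma~\ref{Lem2} and inequality~\eqref{eq2}. The only mild subtlety is to choose $c_5$ that absorbs both $c_4/\tau$ and $c_4/p$ (if one integrates $a_0(s)s$ directly against the upper bound in Lemma~\ref{Lem2}(b) rather than going through \eqref{eq2}); either route produces the same conclusion.
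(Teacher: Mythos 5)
Your proof is correct and follows exactly the route the paper intends: the paper gives no explicit argument, stating only that the corollary follows from Lemma~\ref{Lem2} and \eqref{eq2}, which is precisely the combination you carry out (upper bound via \eqref{eq2} with Lemma~\ref{Lem2}(b), lower bound by radializing Lemma~\ref{Lem2}(c) and integrating).
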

	
	\begin{exmp}
		The following maps satisfy hypotheses $H(a)$ (for details see Papageorgiou-R\v{a}dulescu \cite{Ref17}):
		\begin{itemize}
			\item[(a)] $a(y)=|y|^{p-2}y$, $1<p<+\infty$. This map corresponds to the $p$-Laplace differential operator defined by
			$\Delta_p u =\mbox{div}\, (|\nabla u |^{p-2} \nabla u) $ for all $u \in W^{1,p}(\Omega).$
			\item[(b)] $a(y)=|y|^{p-2}y+\mu|y|^{q-2}y$, $1<q<p<+\infty$, $\mu \geq 0$. This map corresponds to the $(p,q)$-Laplacian defined by
			$\Delta_p u +\Delta_q u $ for all $u \in W^{1,p}(\Omega).$
			Such operators arise in problems of mathematical physics (see Cherfils-Il$^\prime$yasov \cite{Ref3}). 
			\item[(c)] $a(y)=(1+|y|^2)^{\frac{p-2}{2}}y$, $1<p<+\infty$. This map corresponds to the generalized $p$-mean curvature differential operator defined by
			$\mbox{div}\, ((1+|\nabla u |^{2})^{\frac{p-2}{2}} \nabla u)$ for all $ u \in W^{1,p}(\Omega).$
			\item[(d)] $a(y)=|y|^{p-2}y\left[1+\dfrac{1}{1+|y|^p}\right]$, $1<p<+\infty$. This map corresponds to the following pertubation of the $p$-Laplacian
			$\Delta_p u + \mbox{div}\, \left(\dfrac{|\nabla u |^{p-2}\nabla u}{1+|\nabla u |^{p}}\right) $ for all $ u \in W^{1,p}(\Omega).$ 
		\end{itemize}
	\end{exmp}
	
	Let $A: W^{1,p}(\Omega) \to W^{1,p}(\Omega)^\ast$ be defined by
	\begin{equation}\label{eq3}\langle A(u),h \rangle = \int_\Omega (a(\nabla u), \nabla h)_{\mathbb{R}^N}dz \quad \mbox{for all } u,h \in W^{1,p}(\Omega).\end{equation}
	
	Using Lemma \ref{Lem2}, we obtain the following result concerning the map $A(\cdot)$ (see Gasi\'nski-Papageorgiou \cite{Ref8},  Problem 2.192, p. 279).
	
	\begin{prop}\label{prop4}
		If hypotheses $H(a) \, (i),(ii),(iii)$ hold, then the map \\ $A : W^{1,p}(\Omega) \to W^{1,p}(\Omega)^\ast$ defined by \eqref{eq3} is bounded $($that is, it maps bounded sets to bounded sets$)$, continuous, monotone $($hence also maximal monotone$)$ and of type $(S)_+$ $($that is, if $u_n  \xrightarrow{w} u $ in $W^{1,p}(\Omega)$ and $\limsup_{n \to +\infty} \langle A(u_n),u_n-u \rangle \leq 0$, then $u_n \to u$ in $W^{1,p}(\Omega))$. 
	\end{prop}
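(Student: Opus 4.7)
The plan is to verify the four asserted properties of $A$ in turn, with Lemma \ref{Lem2} supplying the main pointwise input.

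\textbf{Boundedness and continuity.} The growth bound $|a(y)| \leq c_4(|y|^{\tau-1} + |y|^{p-1})$ of Lemma \ref{Lem2}(b), combined with H\"older's inequality (using $\tau<p$ and the embedding $L^p(\Omega) \hookrightarrow L^{p/(p-\tau+1)}(\Omega)$, valid since $\Omega$ is bounded), gives
\[
|\langle A(u),h\rangle| \leq c\left(\|\nabla u\|_p^{\tau-1} + \|\nabla u\|_p^{p-1}\right)\|\nabla h\|_p,
\]
so $A$ maps bounded sets to bounded sets. For continuity, if $u_n \to u$ in $W^{1,p}(\Omega)$ one extracts a subsequence with $\nabla u_n \to \nabla u$ a.e.\ on $\Omega$ and with $L^p$-domination; continuity of $a(\cdot)$ and dominated convergence then yield $a(\nabla u_n) \to a(\nabla u)$ in $L^{p'}(\Omega;\mathbb{R}^N)$, whence $A(u_n) \to A(u)$ in $W^{1,p}(\Omega)^\ast$ by a further application of H\"older. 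The Urysohn subsequence principle promotes this to convergence of the full sequence.

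\textbf{Monotonicity and maximal monotonicity.} Strict monotonicity of $a$ from Lemma \ref{Lem2}(a) integrates pointwise to
\[
\langle A(u)-A(v), u-v\rangle = \int_\Omega (a(\nabla u) - a(\nabla v), \nabla u - \nabla v)_{\mathbb{R}^N}\,dz \geq 0
\]
for all $u,v \in W^{1,p}(\Omega)$, so $A$ is monotone. Combined with the already-established continuity of $A$ on the reflexive Banach space $W^{1,p}(\Omega)$, the Browder--Minty theorem upgrades this to maximal monotonicity.

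\textbf{The $(S)_+$-property.} This is the step requiring real work. Assume $u_n \xrightarrow{w} u$ in $W^{1,p}(\Omega)$ with $\limsup_{n\to\infty} \langle A(u_n),u_n-u\rangle \leq 0$. Monotonicity gives $\langle A(u_n),u_n-u\rangle \geq \langle A(u),u_n-u\rangle \to 0$ by weak convergence, so
\[
\lim_{n\to\infty} \int_\Omega (a(\nabla u_n)-a(\nabla u), \nabla u_n - \nabla u)_{\mathbb{R}^N}\,dz = 0.
\]
Since the integrand is a.e.\ non-negative, along a subsequence it tends to $0$ a.e.\ on $\Omega$, and the strict monotonicity of $a$ then forces $\nabla u_n(z) \to \nabla u(z)$ a.e. The main obstacle, and the genuinely delicate step, is the upgrade of this a.e.\ convergence of gradients to strong convergence in $L^p(\Omega;\mathbb{R}^N)$: I would combine the $L^p$-boundedness of $\{\nabla u_n\}$, the coercivity estimate $(a(y),y)_{\mathbb{R}^N} \geq \tfrac{c_1}{p-1}|y|^p$ from Lemma \ref{Lem2}(c), and Vitali's convergence theorem via an equi-integrability argument for $\{|\nabla u_n|^p\}$. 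Paired with the Rellich--Kondrachov compactness $u_n \to u$ in $L^p(\Omega)$, this yields $u_n \to u$ in $W^{1,p}(\Omega)$, completing the verification.
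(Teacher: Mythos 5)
The paper does not actually prove this proposition: it is quoted verbatim from Gasi\'nski--Papageorgiou \cite{Ref8} (Problem 2.192, p.~279), so there is no in-text argument to compare against. Your proof is the standard one that such a reference would contain, and the first three parts (boundedness via Lemma \ref{Lem2}(b) and H\"older on the bounded domain, continuity via a.e.\ convergence plus dominated convergence plus the Urysohn subsequence principle, monotonicity by integrating the pointwise monotonicity of $a$, and maximal monotonicity from monotone plus hemicontinuous on a reflexive space) are all correct as written. Note only that $A$ is merely monotone, not strictly monotone, since $\langle A(u)-A(v),u-v\rangle=0$ forces $\nabla u=\nabla v$ but not $u=v$ in $W^{1,p}(\Omega)$; you correctly claim only monotonicity.

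The one place where your argument is a sketch rather than a proof is the final upgrade in the $(S)_+$ step. Two remarks. First, passing from $(a(\nabla u_n)-a(\nabla u),\nabla u_n-\nabla u)\to 0$ a.e.\ to $\nabla u_n\to\nabla u$ a.e.\ needs a word: for fixed $\xi=\nabla u(z)$ one rules out $|\xi_n|\to\infty$ using Lemma \ref{Lem2}(b),(c), and then any limit point $\xi'\neq\xi$ of $\{\xi_n\}$ would give $(a(\xi')-a(\xi),\xi'-\xi)=0$, contradicting strict monotonicity of $a$. Second, and more importantly, the equi-integrability of $\{|\nabla u_n|^p\}$ that you invoke for Vitali does \emph{not} follow from $L^p$-boundedness of the gradients; as stated this is a gap. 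The clean way to close it is through the primitive $\widehat{G}$: convexity gives $\int_\Omega\widehat{G}(\nabla u_n)dz\leq\int_\Omega\widehat{G}(\nabla u)dz+\int_\Omega(a(\nabla u_n),\nabla u_n-\nabla u)_{\mathbb{R}^N}dz$, and the right-hand integral tends to $0$ (it differs from $\langle A(u_n),u_n-u\rangle-\langle A(u),u_n-u\rangle+o(1)$ by terms controlled as above), while sequential weak lower semicontinuity of the convex integral functional gives the reverse inequality; hence $\int_\Omega\widehat{G}(\nabla u_n)dz\to\int_\Omega\widehat{G}(\nabla u)dz$. Combined with $\widehat{G}(\nabla u_n)\to\widehat{G}(\nabla u)$ a.e.\ and $\widehat{G}\geq 0$, Scheff\'e's lemma yields $L^1$-convergence, hence uniform integrability of $\{\widehat{G}(\nabla u_n)\}$, hence of $\{|\nabla u_n|^p\}$ by the lower bound $\widehat{G}(y)\geq\frac{c_1}{p(p-1)}|y|^p$ of Corollary \ref{cor3}; Vitali then gives $\nabla u_n\to\nabla u$ in $L^p(\Omega;\mathbb{R}^N)$, and Rellich--Kondrachov finishes the argument exactly as you say. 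With that step filled in, your proof is complete and is the expected one.
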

	
	The following spaces will play a central role in the study of problem \eqref{PL}: the Sobolev space $W^{1,p}(\Omega)$, the Banach space $C^1(\overline{\Omega})$ and the ``boundary'' Lebesgue space $L^p(\partial \Omega)$. By $\| \cdot \|$ we denote the norm of the Sobolev space $W^{1,p}(\Omega)$ defined by $$\|u \| = \left[ \|u\|^p_p + \|\nabla u \|_p^p \right] ^{1/p} \mbox{ for all $u \in W^{1,p}(\Omega).$}$$
	The Banach space $C^1(\overline{\Omega})$ is ordered with order (positive) cone 
	$C_+=\{u \in C^1(\overline{\Omega}) \, : \, u(z) \geq 0 \mbox{ for all } z \in \overline{\Omega}\}.$
	This cone has a nonempty interior given by
	$$D_+=\left\{u \in C_+ \, : \, u(z) > 0 \mbox{ for all } z \in \overline{\Omega} \right\}.$$

	Also, we will consider another open cone in $C^1(\overline{\Omega})$, namely the cone
	$${\rm int \ }C_+=\left\{u \in C_+ \, : \, u(z) > 0 \mbox{ for all } z \in  \Omega,\, \dfrac{\partial u}{\partial n}\Big|_{\partial \Omega \cap u^{-1}(0)}<0\right\}.$$

On $\partial \Omega$ we consider the $(N-1)$-dimensional surface (Hausdorff) measure $\sigma(\cdot)$. Using this measure, we can define in the usual way the boundary Lebesgue spaces $L^q(\partial \Omega)$, $1 \leq q \leq  +\infty$. There exists a unique continuous linear map $\gamma_0: W^{1,p}(\Omega) \to L^p(\partial \Omega)$, known as the ``trace map'', such that
	$\gamma_0(u)=u \big|_{\partial \Omega} $ for all $u \in W^{1,p}(\Omega) \cap C(\overline{\Omega}).$
	So, the trace map extends the notion of boundary values to all Sobolev functions.  The trace map   $\gamma_0(\cdot)$ is compact into $L^q(\partial \Omega)$ for all $q \in \left[1, \dfrac{(N-1)p}{N-p}\right)$ if $ p< N$ and into $L^q(\partial \Omega)$ for all $1 \leq q < +\infty$ if $N \leq p$. Also, we have $$\mbox{im }\gamma_0=W^{\frac{1}{p'},p}(\partial \Omega) \, \left(\frac{1}{p}+\frac{1}{p'}=1\right), \quad  \mbox{ker }\gamma_0=W^{1,p}_0(\Omega).$$
	In the sequel, for notational economy, we drop the use of the map $\gamma_0(\cdot)$. All restrictions of   Sobolev functions on $\partial \Omega$ are understood in the sense of traces.	
	
	We introduce the following hypotheses on the potential  $\xi(\cdot)$ and the boundary coefficient $\beta(\cdot)$:
	\begin{itemize}
		\item[$H(\xi)$:] $\xi \in L^\infty(\Omega)$, $\xi(z) \geq 0$ for a.a. $z \in \Omega$.
		\item[$H(\beta)$:] $\beta \in C^{0,\eta}(\partial \Omega )$ for some $\eta \in (0,1)$, $\beta(z) \geq0$ for all $z \in \partial \Omega$.
		\item[$H_0$:] $\xi \not \equiv 0$ or $\beta \not \equiv 0$.
	\end{itemize}
	
	\begin{rem}
		If $\beta \equiv 0$, then we have the usual Neumann problem.
	\end{rem}
	
	The next two lemmata can be found in Papageorgiou-R\v{a}dulescu-Repov\v{s} \cite{Ref20}.
	
	\begin{lem}
		\label{Lem5}If $\widehat{\xi} \in L^\infty(\Omega)$, $\widehat{\xi}(z) \geq 0$ for a.a. $z \in \Omega$, $\widehat{\xi} \not \equiv 0$, then there exists $c_6>0$ such that $\|\nabla u\|_p^p + \int_\Omega \widehat{\xi}(z)|u|^p dz \geq c_6 \|u\|^p$ for all $u \in W^{1,p}(\Omega)$.
	\end{lem}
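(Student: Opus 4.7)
The plan is to argue by contradiction, exploiting the compactness of the Sobolev embedding $W^{1,p}(\Omega) \hookrightarrow L^p(\Omega)$. Suppose no such constant $c_6>0$ exists. Then, after normalizing, I produce a sequence $\{u_n\}_{n\in\mathbb{N}} \subset W^{1,p}(\Omega)$ with $\|u_n\|=1$ for every $n$, yet
\[
\|\nabla u_n\|_p^p + \int_\Omega \widehat{\xi}(z)|u_n|^p\,dz \;\longrightarrow\; 0.
\]
In particular, both $\|\nabla u_n\|_p \to 0$ and $\int_\Omega \widehat{\xi}(z)|u_n|^p\,dz \to 0$.

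Next, boundedness of $\{u_n\}$ in $W^{1,p}(\Omega)$ combined with the Rellich--Kondrachov compact embedding yields a subsequence (not relabeled) such that $u_n \xrightarrow{w} u$ in $W^{1,p}(\Omega)$ and $u_n \to u$ in $L^p(\Omega)$. Since $\nabla u_n \to 0$ strongly in $L^p(\Omega;\mathbb{R}^N)$ while $\nabla u_n \xrightarrow{w} \nabla u$ in the same space, uniqueness of weak limits gives $\nabla u = 0$ a.e., so $u$ is constant on the (connected) domain $\Omega$, say $u \equiv c$. Combining this with the $L^p$-convergence, $u_n \to c$ in fact strongly in $W^{1,p}(\Omega)$, whence $\|c\|_p=1$, forcing $c \neq 0$.

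Finally, from $u_n \to c$ in $L^p(\Omega)$ and $\widehat{\xi} \in L^\infty(\Omega)$, I can pass to the limit in the weighted integral to obtain
\[
0 \;=\; \lim_{n\to\infty}\int_\Omega \widehat{\xi}(z)|u_n|^p\,dz \;=\; |c|^p \int_\Omega \widehat{\xi}(z)\,dz.
\]
Since $c \neq 0$ and $\widehat{\xi}(z) \geq 0$ a.e., this forces $\widehat{\xi} \equiv 0$ in $\Omega$, contradicting the hypothesis $\widehat{\xi} \not\equiv 0$.

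The argument is essentially a Poincar\'e--Wirtinger-type compactness trick, and no real obstacle is expected: the key ingredient is the compact embedding $W^{1,p}(\Omega) \hookrightarrow L^p(\Omega)$, which converts the pointwise smallness of $\|\nabla u_n\|_p$ and of the weighted term into a genuine constant limit. The only point to be mindful of is the connectedness of $\Omega$ (built into the standing assumption that $\Omega$ is a bounded domain with $C^2$-boundary), used to conclude that a function with $\nabla u = 0$ a.e. is identically constant; if one wanted to drop connectedness, one would instead apply the reasoning componentwise and pick any component on which $\int \widehat{\xi}>0$.
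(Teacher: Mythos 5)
Your argument is correct and complete: the normalization, the Rellich--Kondrachov compactness, the identification of the limit as a nonzero constant, and the passage to the limit in the weighted term (using that $u_n\to c$ in $L^p(\Omega)$ implies $|u_n|^p\to|c|^p$ in $L^1(\Omega)$ against $\widehat{\xi}\in L^\infty(\Omega)$) all go through, and you correctly flag connectedness of $\Omega$ as the point where constancy is used. The paper itself gives no proof of this lemma, simply citing Papageorgiou--R\u{a}dulescu--Repov\v{s} \cite{Ref20}; your contradiction-plus-compactness argument is the standard one used there, so there is nothing further to reconcile.
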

	
	\begin{lem}
		\label{Lem6}If $\widehat{\beta} \in L^\infty(\partial \Omega)$, $\widehat{\beta}(z) \geq 0$ for  a.a. $z \in \partial \Omega$, $\widehat{\beta} \not \equiv 0$, then there exists $c_7>0$ such that $\|\nabla u\|_p^p + \int_{\partial \Omega} \widehat{\beta}(z)|u|^p d\sigma \geq c_7 \|u\|^p$ for all $u \in W^{1,p}(\Omega)$.
	\end{lem}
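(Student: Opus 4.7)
The plan is to argue by contradiction. Suppose the inequality fails, so there exists a sequence $\{u_n\}_{n \in \NaturalNumber} \subseteq W^{1,p}(\Omega)$ with $\|u_n\|=1$ for all $n$ and
\[
\|\nabla u_n\|_p^p + \int_{\partial \Omega} \widehat{\beta}(z)|u_n|^p\, d\sigma < \frac{1}{n}.
\]
Since $\{u_n\}_{n \in \NaturalNumber}$ is bounded in $W^{1,p}(\Omega)$, by passing to a subsequence I may assume $u_n \xrightarrow{w} u$ in $W^{1,p}(\Omega)$, and by the compactness of the embedding $W^{1,p}(\Omega) \hookrightarrow L^p(\Omega)$ and of the trace map into $L^p(\partial\Omega)$, I get $u_n \to u$ in $L^p(\Omega)$ and in $L^p(\partial\Omega)$.

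Next I would exploit weak lower semicontinuity of $\|\nabla \cdot\|_p$: since $\|\nabla u_n\|_p \to 0$, it follows that $\|\nabla u\|_p = 0$, so $u$ is (a.e.\ equal to) a constant $c \in \RealNumber$. Combining the convergence $\|\nabla u_n\|_p \to 0 = \|\nabla u\|_p$ with the strong $L^p$-convergence yields $u_n \to u$ in $W^{1,p}(\Omega)$, hence $\|u\|=1$. In particular $u$ is a nonzero constant, i.e.\ $c \neq 0$.

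Finally, I pass to the limit in the boundary term. Using $u_n \to c$ in $L^p(\partial\Omega)$ and $\widehat{\beta} \in L^\infty(\partial\Omega)$,
\[
\int_{\partial \Omega} \widehat{\beta}(z)|u_n|^p\, d\sigma \to |c|^p \int_{\partial \Omega} \widehat{\beta}(z)\, d\sigma.
\]
The left-hand side is forced to $0$ by construction, so $|c|^p \int_{\partial \Omega} \widehat{\beta}\, d\sigma = 0$. Because $\widehat{\beta} \geq 0$ and $\widehat{\beta} \not\equiv 0$, the integral is strictly positive, forcing $c = 0$, which contradicts $\|u\|=1$. The main (and essentially only) obstacle is ensuring strong $W^{1,p}$-convergence of $\{u_n\}_{n \in \NaturalNumber}$ so as to identify the limit as a \emph{nonzero} constant; this is secured by the interplay between compactness of the embedding into $L^p(\Omega)$ and the vanishing of the gradient norms, after which the trace compactness into $L^p(\partial\Omega)$ closes the argument.
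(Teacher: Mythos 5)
Your argument is correct: the contradiction--compactness scheme (normalized minimizing sequence, weak convergence plus compactness of the embedding into $L^p(\Omega)$ and of the trace into $L^p(\partial\Omega)$, vanishing gradient forcing a constant limit, and positivity of $\int_{\partial\Omega}\widehat{\beta}\,d\sigma$ ruling out a nonzero constant) is exactly the standard proof of this lemma. The paper itself gives no proof, citing Papageorgiou--R\v{a}dulescu--Repov\v{s} \cite{Ref20}, where the argument is the same as yours.
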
	
	
	Now consider a Carath\'{e}odory function $f_0: \Omega \times \mathbb{R} \to \mathbb{R}$ which satisfies
	$$|f_0(z,x)| \leq a_0(z)(1+|x|^{r-1}) \quad \mbox{for a.a. } z \in \Omega \mbox{ and all } x \in \mathbb{R},$$
	with $a_0 \in L^\infty(\Omega)$, $1<r \leq p^*$. We set $F_0(z,x)=\int_0^xf_0(z,s)ds$ and consider the $C^1$-functional $\varphi_0 :W^{1,p}(\Omega) \to \mathbb{R}$ defined by 
	$$\varphi_0(u)=\int_\Omega \widehat{G}(\nabla u)dz + \frac{1}{p}\int_{\partial \Omega} \beta(z) |u|^p d\sigma -\int_\Omega F_0(z,u) dz \quad \mbox{for all } u \in W^{1,p}(\Omega).$$
	
	The next result is an outgrowth of the nonlinear regularity theory and can be found in Papageorgiou-R\v{a}dulescu \cite{Ref16}.
	\begin{prop}\label{P7}
		If hypotheses $H(a)$, $H(\beta)$ hold and $u_0 \in W^{1,p}(\Omega)$ is a local $C^1(\overline{\Omega})$-minimizer of $\varphi_0(\cdot)$, that is, there exists $\rho_1>0$ such that $\varphi_0(u_0) \leq \varphi_0(u_0+h)$ for all $h \in C^1(\overline{\Omega})$, $\|h\|_{C^1(\overline{\Omega})}\leq \rho_1,$
		then $u_0 \in C^{1,\alpha}(\overline{\Omega})$ for some $\alpha \in (0,1)$ and it is also a local $W^{1,p}(\Omega)$-minimizer of $\varphi_0(\cdot)$, that is, there exists $\rho_2 >0$ such that
		$\varphi_0(u_0) \leq \varphi_0(u_0+h)$ for all $h \in W^{1,p}(\Omega)$, $\|h\|\leq \rho_2.$
	\end{prop}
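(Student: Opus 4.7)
The plan is to follow the nonlinear analogue of the Brezis--Nirenberg argument as adapted to nonhomogeneous operators. The proof splits naturally into two stages: first, show that the local $C^1(\overline{\Omega})$-minimizer $u_0$ has $C^{1,\alpha}(\overline{\Omega})$-regularity; second, use this regularity together with a constrained-minimization/contradiction argument to upgrade the $C^1$-minimizer property to a $W^{1,p}(\Omega)$-minimizer property.

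For the regularity step, since $C^1(\overline{\Omega})$ is dense in $W^{1,p}(\Omega)$, the local $C^1$-minimizer property forces $\langle \varphi_0'(u_0), h \rangle = 0$ for every $h \in W^{1,p}(\Omega)$. Hence $u_0$ is a weak solution of
\begin{equation*}
-\mathrm{div}\, a(\nabla u_0) + \xi(z) |u_0|^{p-2} u_0 = f_0(z, u_0) \text{ in } \Omega, \quad \frac{\partial u_0}{\partial n_a} + \beta(z)|u_0|^{p-2} u_0 = 0 \text{ on } \partial\Omega.
\end{equation*}
The growth bound on $f_0$ together with a Moser-type iteration yields $u_0 \in L^\infty(\Omega)$, and Lieberman's boundary $C^{1,\alpha}$-estimate \cite{Ref11} (applicable thanks to $H(a)(i)$--$(iii)$ and $H(\beta)$) then upgrades this to $u_0 \in C^{1,\alpha}(\overline{\Omega})$ for some $\alpha \in (0,1)$.

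For the main assertion I argue by contradiction. Suppose there exist $\{u_n\} \subseteq W^{1,p}(\Omega)$ with $\varepsilon_n := \|u_n - u_0\| \to 0^+$ and $\varphi_0(u_n) < \varphi_0(u_0)$. For each $n$ solve the constrained problem
\begin{equation*}
m_n = \inf\bigl\{\varphi_0(v) : \|v - u_0\| \leq \varepsilon_n \bigr\}.
\end{equation*}
The convexity of $\widehat{G}$ (Corollary \ref{cor3}), the compactness of the relevant Sobolev embeddings and traces, and weak lower semicontinuity deliver a minimizer $v_n \neq u_0$ with $\varphi_0(v_n) \leq \varphi_0(u_n) < \varphi_0(u_0)$. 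Lagrange's rule then produces $\mu_n \geq 0$ such that $v_n$ solves a Robin problem of the same structural type, with an additional $\mu_n$-weighted $p$-Laplacian-like perturbation acting on $v_n - u_0$. Moser iteration yields $\|v_n\|_\infty \leq M$, and Lieberman's theorem gives $\|v_n\|_{C^{1,\alpha}(\overline{\Omega})} \leq K$ uniformly in $n$. Arzel\`a--Ascoli then extracts a subsequence converging in $C^1(\overline{\Omega})$ to the $W^{1,p}$-limit $u_0$. For large $n$ the competitor $v_n$ lies in the $C^1$-neighborhood on which $u_0$ minimizes $\varphi_0$, forcing $\varphi_0(v_n) \geq \varphi_0(u_0)$, a contradiction.

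The principal obstacle is verifying that the uniform Lieberman estimate survives the presence of the Lagrange multiplier term. This reduces to showing that $\mu_n$ remains bounded, which I would establish by testing the Lagrange equation against $v_n - u_0$, exploiting the monotonicity of $A$ from Proposition \ref{prop4} together with the growth bound on $f_0$ and the smallness $\|v_n - u_0\| \leq \varepsilon_n \to 0$. Once $\sup_n \mu_n < +\infty$, the right-hand side of the perturbed equation has uniformly controlled growth, the structural hypotheses required by Lieberman are satisfied uniformly in $n$, and the contradiction closes the proof.
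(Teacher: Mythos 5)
The paper does not actually prove Proposition \ref{P7}; it is quoted as a known result and attributed to Papageorgiou--R\u{a}dulescu \cite{Ref16}. Your outline reproduces the standard argument from that line of work (the nonlinear Brezis--Nirenberg scheme): deduce $\varphi_0'(u_0)=0$ by density of $C^1(\overline{\Omega})$ in $W^{1,p}(\Omega)$, get $u_0\in L^\infty(\Omega)$ and then $u_0\in C^{1,\alpha}(\overline{\Omega})$ via Lieberman, and argue by contradiction through constrained minimization on small balls, uniform $C^{1,\alpha}$ bounds for the constrained minimizers $v_n$, and Arzel\`a--Ascoli to land $v_n$ inside the $C^1$-neighborhood where $u_0$ minimizes. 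This architecture is correct and is essentially the proof of record.

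The one genuine gap is exactly at the point you flag: the boundedness of the Lagrange multipliers $\mu_n$, and the method you propose for it does not close. Testing the Lagrange equation against $v_n-u_0$ and using $\varphi_0'(u_0)=0$ together with the monotonicity of $A$ yields an estimate of the form $\mu_n\|v_n-u_0\|^p\leq \int_\Omega\bigl(f_0(z,v_n)-f_0(z,u_0)\bigr)(v_n-u_0)\,dz$; the right-hand side is only $o(1)\cdot\varepsilon_n$ (no Lipschitz continuity of $f_0(z,\cdot)$ is assumed), while the left-hand side carries the factor $\varepsilon_n^{p}$ with $p>1$, so dividing gives $\mu_n\leq o(1)\,\varepsilon_n^{1-p}$, which is compatible with $\mu_n\to+\infty$. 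The standard proofs do not establish boundedness of $\mu_n$ at all: they instead divide the Euler--Lagrange equation by $1+\mu_n$ and observe that the resulting operator, a convex combination of $a(y)$ and the $p$-Laplacian-type term $|y-\nabla u_0(z)|^{p-2}(y-\nabla u_0(z))$ (with $\nabla u_0\in C^{0,\alpha}$ already known from the first step), satisfies Lieberman's structure conditions with constants independent of $n$, while the normalized right-hand side stays bounded in $L^\infty$. You should replace your boundedness claim with this normalization, treating the two cases ($\{\mu_n\}$ bounded, $\mu_n\to+\infty$) if you prefer; with that modification the uniform $C^{1,\alpha}$ estimate and the rest of your argument go through. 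A minor additional caveat: the weak lower semicontinuity of $\varphi_0$ on the ball, needed to attain $m_n$, uses the compactness of the embedding $W^{1,p}(\Omega)\hookrightarrow L^r(\Omega)$ and hence requires $r<p^*$; the borderline case $r=p^*$ admitted in the statement needs a separate remark.
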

	
	This result is a powerful tool in the study of elliptic problems, when it is combined with the following strong comparison principle due to Papageorgiou-R\v{a}dulescu-Repov\v{s} \cite{Ref20}.
	
	\begin{prop}
		\label{P8} If hypotheses $H(a)$ hold, $\widehat{\xi} \in L^\infty(\Omega)$, $\widehat{\xi}(z) \geq 0$ for a.a. $z \in \Omega$, $h_1,h_2 \in L^\infty(\Omega)$ such that $0< c_8 \leq h_2(z)-h_1(z)$ for a.a. $z \in \Omega$, $u,v \in C^1(\overline{\Omega}) \setminus \{0\}$ satisfy $u \leq v$ and 
		\begin{align*}& - {\rm div \ }a(\nabla u(z))+\widehat{\xi}(z) |u(z)|^{p-2}u(z)=h_1(z) \quad \mbox{for a.a. } z \in \Omega,\\ & - {\rm div \ }a(\nabla v(z))+\widehat{\xi}(z) |v(z)|^{p-2}v(z)=h_2(z) \quad \mbox{for a.a. } z \in \Omega,\end{align*} then $v-u \in {\rm int \ } \widehat{C}_+$.
	\end{prop}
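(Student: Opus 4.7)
My plan is to reduce the claim to a strong maximum principle applied to a suitable linearization of the difference $w := v - u \in C^1(\overline{\Omega})$, which is nonnegative by assumption. Using the fundamental theorem of calculus on the segment from $\nabla u(z)$ to $\nabla v(z)$, and on the segment from $u(z)$ to $v(z)$, I would write
\[
a(\nabla v) - a(\nabla u) = M(z)\nabla w, \qquad M(z) := \int_0^1 \nabla a\bigl(\nabla u(z) + t\nabla w(z)\bigr)\,dt,
\]
\[
|v|^{p-2}v - |u|^{p-2}u = (p-1)\eta(z)\,w, \qquad \eta(z) := \int_0^1 |u(z) + t w(z)|^{p-2}\,dt \geq 0.
\]
Subtracting the PDEs satisfied by $u$ and $v$, and subtracting the two Robin conditions at the level of $\partial u/\partial n_a$ and $\partial v/\partial n_a$, $w$ turns out to satisfy, in the weak sense,
\[
-\operatorname{div}(M(z)\nabla w) + (p-1)\widehat{\xi}(z)\eta(z)\,w = h_2(z)-h_1(z)\geq c_8>0 \quad\text{in }\Omega,
\]
together with a linear Robin-type condition on $\partial\Omega$.

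The next step is to verify the structural properties of the linearized operator. By $H(a)(iii)$, for every $t\in[0,1]$ and $\xi\in\mathbb{R}^N$,
\[
\bigl(\nabla a(\nabla u + t\nabla w)\xi,\xi\bigr)_{\mathbb{R}^N} \geq \frac{\vartheta(|\nabla u + t\nabla w|)}{|\nabla u + t\nabla w|}|\xi|^2
\]
wherever the denominator is nonzero, so $M(z)$ is symmetric and positive semidefinite a.e.; and by $H(a)(ii)$ combined with \eqref{eq1}, its entries stay bounded on any region where $|\nabla u|+|\nabla v|$ is bounded, which is automatic from $u,v\in C^1(\overline\Omega)$. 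On the open set $\Omega\setminus E$, where $E:=\{z\in\overline{\Omega}:\nabla u(z)=\nabla v(z)=0\}$, the operator $\mathcal{L}w:=-\operatorname{div}(M\nabla w)+(p-1)\widehat{\xi}\eta\,w$ is linear and uniformly elliptic with bounded coefficients and nonnegative zeroth-order term, and the right-hand side is bounded below by $c_8>0$. Therefore the classical strong maximum principle gives $w>0$ on every connected component of $\Omega\setminus E$ whose closure meets a point where $w>0$, while Hopf's boundary point lemma gives $\partial w/\partial n_a <0$ at each boundary zero of $w$ lying off $E$.

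The main obstacle is the degenerate set $E$, on which $M(z)$ can collapse and the linear theory breaks down. To handle this, I would invoke the nonlinear strong maximum principle and boundary point lemma of Pucci--Serrin \cite{Ref23} (pp.~111, 120), which are designed precisely for quasilinear operators $\operatorname{div}\,a(\nabla\cdot)$ under hypotheses of the form $H(a)(i)$--$(iii)$ and do not require uniform ellipticity, only the strict monotonicity of $t\mapsto a_0(t)t$ and an Osgood-type integrability condition that is built into \eqref{eq1}. Applied to $w$, with the strict lower bound $h_2-h_1\geq c_8$ ruling out the tangency/touching case, these yield $w(z)>0$ for all $z\in\Omega$ together with $\partial w/\partial n_a(z)<0$ at every $z\in\partial\Omega$ where $w(z)=0$. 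Combined with the previous paragraph, this gives $w=v-u\in \mathrm{int}\,\widehat{C}_+$, as required. The most delicate point is therefore not the linearized maximum principle step but the passage across $E$, where the strict separation of $h_1$ and $h_2$ is indispensable.
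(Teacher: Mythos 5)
The paper itself offers no proof of Proposition \ref{P8}: it is quoted from Papageorgiou--R\v{a}dulescu--Repov\v{s} \cite{Ref20}, so there is no internal argument to measure yours against, and your attempt must stand on its own. The linearization via $M(z)=\int_0^1\nabla a(\nabla u+t\nabla w)\,dt$ is indeed the standard opening move, and on the nondegenerate set the reduction to a linear strong maximum principle is essentially sound. One repairable inaccuracy: your claim that the entries of $M(z)$ stay bounded wherever $|\nabla u|+|\nabla v|$ is bounded is false in general, since $H(a)\,(ii)$ together with \eqref{eq1} only gives $|\nabla a(y)|\le c_3c_2(|y|^{\tau-2}+|y|^{p-2})$, which blows up as $y\to 0$ when $\tau<2$ or $p<2$; boundedness and uniform ellipticity of $M$ hold only after localizing away from the set where the segment $[\nabla u(z),\nabla v(z)]$ approaches the origin, not merely away from $E$.

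The decisive gap is your treatment of the degenerate set $E$. You propose to invoke the nonlinear strong maximum principle and boundary point lemma of Pucci--Serrin \cite{Ref23} ``applied to $w$''. Those theorems govern a \emph{single} nonnegative function satisfying a quasilinear differential inequality of the form ${\rm div}\,a(\nabla w)\le\cdots$; but $w=v-u$ satisfies no such inequality, because $a(\nabla v)-a(\nabla u)\neq a(\nabla(v-u))$ for a nonlinear map $a$. This is precisely why Pucci--Serrin suffices for Proposition \ref{P9} (positivity of one solution) but not for comparing two solutions, and why strong comparison principles for degenerate quasilinear operators form a separate and genuinely harder body of results. The whole difficulty of Proposition \ref{P8} is concentrated on $E$ (for $p>2$ the matrix $M$ vanishes there; for $p<2$ it is singular), and the known proofs, including the one in \cite{Ref20}, cross $E$ by a different mechanism: at an interior touching point $z_0$ one has $\nabla u(z_0)=\nabla v(z_0)$ (interior minimum of $w\ge 0$), and one exploits the quantitative gap $h_2-h_1\ge c_8>0$ directly in the divergence identity $-{\rm div}(a(\nabla v)-a(\nabla u))=(h_2-h_1)-\widehat{\xi}(|v|^{p-2}v-|u|^{p-2}u)$, integrated over small balls, to show that the touching set is open and hence empty --- not by any pointwise maximum principle for $w$. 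You correctly located the delicate point, but the step you propose to resolve it is not a valid application of the cited theorems, so the proof as written does not go through.
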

	
	Next, let us fix some basic notation which we will use in the sequel. So, for $x \in \mathbb{R}$, we set $x^\pm = \max \{\pm x, 0\}$. Then for $u \in W^{1,p}(\Omega)$, we define $u^\pm(\cdot)=u(\cdot)^\pm$ and we know that $$u^\pm \in W^{1,p}(\Omega),  \quad u=u^+-u^-, \quad |u| =u^++u^-.$$
	
	If $k: \Omega \times \mathbb{R} \to \mathbb{R}$ is a measurable function (for example, a Carath\'eodory function), then we set $N_k(u)(\cdot)=k(\cdot,u(\cdot))$ for all $u \in W^{1,p}(\Omega)$ (the Nemytskii operator corresponding to $k(\cdot,\cdot)$). Also, by $|\cdot |_N$ we denote the Lebesgue measure on $\mathbb{R}^N$. Given $u,v \in W^{1,p}(\Omega)$ with $u \leq v$, we can define the order interval $[u,v]$ by setting 
	$$[u,v]=\{y \in W^{1,p}(\Omega): u(z) \leq y(z) \leq v(z) \mbox{ for a.a. } z \in \Omega \}.$$
	
	By ${\rm int}_{C^1(\overline{\Omega})}[u,v]$, we denote the interior in $C^1(\overline{\Omega})$ of $[u,v] \cap C^1(\overline{\Omega})$. Also, if $u \in W^{1,p}(\Omega)$, then
	$$[u)=\left\{y \in W^{1,p}(\Omega): u(z) \leq y(z) \mbox{ for a.a. } z \in \Omega\right\}.$$

	If $X$ is a Banach space and $\varphi \in C^1(X, \mathbb{R})$, then by $K_\varphi$ we denote the critical set of $\varphi$, that is, $K_\varphi  = \{u \in X : \varphi'(u) =0 \}$. 
	
	Finally, we introduce the hypotheses on the two competing functions in the reaction of problem \eqref{PL}.
	
	\smallskip
	\noindent $H(f)$: $f: \Omega \times \mathbb{R} \rightarrow \mathbb{R}$ is a Carath\'{e}odory function such that $f(z,0) =0$ for a.a. $z \in \Omega$ and
	\begin{itemize}
		\item[(i)]  for every $\rho >0$, there exists $a_\rho \in L^\infty(\Omega)$ such that $f(z, x) \leq a_\rho (z)$ for a.a. $z \in \Omega$
		and all $0 \leq x \leq \rho$;\\
		\item[(ii)] $f(z, x) \geq \eta_s>0$ for a.a. $z \in \Omega$, all $x \geq s >0$, and   $\lim\limits_{x \to +\infty} \dfrac{f(z,x)}{x^{p-1}}=0$ uniformly  for a.a. $z \in \Omega$;\\
		\item[(iii)] there exist $\mathcal{U} \subseteq \Omega$ open and $\delta_0 \in (0,1]$ such that $\overline{\mathcal{U}} \subseteq \Omega$ and $c_9 x^{q-1} \leq f(z,x)$ for a.a. $z \in \overline{\mathcal{U}}$, all $0 \leq x \leq \delta_0$ with $c_9>0$, and $q \in (1,p)$ as in $H(a)\, (iv)$.
	\end{itemize}
	
	\begin{rem}
		Since we are looking for positive solutions and all the above hypotheses concern the positive semiaxis, we may assume without any loss of generality, that $f(z,x)=0$ for a.a. $z \in \Omega$
		and all $x \leq 0$. Hypothesis $H(f)\, (ii)$ implies that for a.a. $z \in \Omega$, $f(z,\cdot)$ is strictly $(p-1)$-sublinear near $+\infty$. Hypothesis $H(f)\,(iii)$ implies that there is a partially concave nonlinearity near zero.
	\end{rem}
	
	\smallskip
	\noindent $H(g)$: $g: \Omega \times \mathbb{R} \rightarrow \mathbb{R}_+=[0,+\infty)$ is a Carath\'{e}odory function such that $g(z,0) =0$ for a.a. $z \in \Omega$ and
	\begin{itemize}
		\item[(i)]  for every $\rho >0$, there exists $a_\rho \in L^\infty(\Omega)$ such that $g(z, x) \leq a_\rho (z)$ for a.a. $z \in \Omega$
		and  all $0 \leq x \leq \rho$;\\
		\item[(ii)]    $\lim\limits_{x \to +\infty} \dfrac{g(z,x)}{x^{p^\ast-1}}=0$ and $\lim\limits_{x \to +\infty} \dfrac{g(z,x)}{x^{p-1}}=+\infty$ uniformly  for a.a. $z \in \Omega$;\\
		\item[(iii)]  $\lim\limits_{x \to 0^+} \dfrac{g(z,x)}{x^{p-1}}=0$ uniformly  for a.a. $z \in \Omega$.
	\end{itemize}
	
	\begin{rem}
		Again we may assume that $g(z,x)=0$ for a.a. $z \in \Omega$
		and
		 all $x \leq 0$.
		Hypothesis $H(g)\, (ii)$ implies that for a.a. $z \in \Omega$, $g(z,\cdot)$ is $(p-1)$-superlinear and has almost critical growth. Hypothesis $H(g)\, (iii)$ says that for a.a. $z \in \Omega$, $g(z,\cdot)$ is  $(p-1)$-sublinear near zero, in contrast to $f(z, \cdot)$ which exhibits a partially concave nonlinearity.
	\end{rem}	
	
	Usually superlinear problems are treated using the so-called Ambrosetti-Rabinowitz condition (see, for example, Motreanu-Motreanu-Papageorgiou \cite{Ref13}, p. 341). This condition, although useful in checking the compactness condition for the energy (Euler) functional of the problem, it is rather restrictive. For this reason we employ a weaker condition (see hypothesis $\widehat{H}_0\, (i)$ below), which incorporates in our framework also superlinear terms which have ``slower'' growth near $+\infty$ and fail to satisfy the Ambrosetti-Rabinowitz condition.
	
	We introduce $F(z,x)=\int_0^x f(z,s)ds$ and $G(z,x)=\int_0^x g(z,s)ds$. For every $\lambda >0$ we define
	$$e_\lambda(z,x)=[\lambda f(z,x)+g(z,x)]x-p[\lambda F(z,x)+G(z,x)].$$
	
	\medskip
	\noindent $\widehat{H}_0$: for all $\lambda$ in a bounded set $B \subseteq (0,+\infty)$, we have:
	\begin{itemize}
		\item[(i)]  there exists $\eta_B \in L^1(\Omega)$ such that $e_\lambda(z, x) \leq e_\lambda(z,v)+\eta_B(z)$ for a.a. $z \in \Omega$
		and  all $0 \leq x \leq v$, $\lambda \in B$;\\
		\item[(ii)]  for every $\rho >0$, we can find  $\widehat{\xi}^B_\rho >0$ such that for a.a. $z \in \Omega$
		and  all $\lambda \in B$,
		$$x \to \lambda f(z,x)+g(z,x)+ \widehat{\xi}^B_\rho x^{p-1}$$ is nondecreasing on $[0,\rho]$.
	\end{itemize}
	
	\begin{rem}
		Hypothesis $\widehat{H}_0\, (i)$ replaces the Ambrosetti-Rabinowitz condition. It is a slight generalization of a condition used by Li-Yang \cite{Ref12} (see also Mugnai-Papageorgiou \cite{Ref14}). Hypothesis $\widehat{H}_0 \ (ii)$ is satisfied, if, for example, for a.a. $z \in \Omega$, the functions $f(z,\cdot)$, $g(z,\cdot)$ are differentiable and for every $\rho >0$, there exists $\widehat{\xi}^B_\rho >0$ such that $$[ \lambda f_x^\prime(z,x)+g^\prime_x(z,x)]x^2 \geq - \widehat{\xi}^B_\rho |x|^{p} \quad \mbox{for a.a. $z \in \Omega$
		and
		 all $0 \leq x \leq \rho$, 
		  $\lambda \in B$}.$$
	\end{rem}
	
	\begin{exmp}
		The following pair of functions $f(z,x)$, $g(z,x)$ satisfies hypotheses $H(f)$, $H(g)$, $\widehat{H}_0$ above:
		$$f(z,x)=\widehat{a}(z)x^{q-1}+c_{10}x^{\tau-1}$$ with $\widehat{a} \in L^\infty(\Omega)_+ \cap {\rm int \, }L^\infty(\mathcal{U})_+$ with $\mathcal{U} \subseteq \Omega$ open, $\overline{\mathcal{U}} \subseteq \Omega$, $c_{10}>0$, $\tau <p$, and $$g(z,x)=\mu(z)x^{p-1} \ln(1+x)$$ with $\mu \in L^\infty(\Omega)$, $\mu(z) \geq \gamma >0$ for a.a. $z \in \Omega$. The function $g(z,\cdot)$ does not satisfy the Ambrosetti-Rabinowitz condition.
	\end{exmp}
	
	In what follows for the sake of simplicity, the collection of all the hypotheses on the data of \eqref{PL}, namely the hypotheses $H(a)$, $H(\xi)$, $H(\beta)$, $H_0$, $H(f)$, $H(g)$, $\widehat{H}_0$ will be denoted by $\widetilde{H}$.
	
	\section{A Bifurcation-Type Theorem}\label{S3}
	
	We introduce the following two sets:
	\begin{align*}
		\mathcal{L} & =\{\lambda >0 : \mbox{ problem \eqref{PL} admits a positive solution}\},\\ S(\lambda) & = \mbox{ set of positive solutions of  \eqref{PL} ($\lambda >0$)}.
	\end{align*}
	
	\begin{prop}
		\label{P9} If hypotheses $\widetilde{H}$ hold, then $S(\lambda) \subseteq D_+$ for all $\lambda >0$.
	\end{prop}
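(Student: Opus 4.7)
The goal is to show that any weak solution $u \in S(\lambda)$ lies in the open cone $D_+$, i.e.\ $u(z) > 0$ for every $z \in \overline{\Omega}$. My plan proceeds in three stages: boundedness, $C^1$-regularity, and strict positivity via a nonlinear maximum principle.

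First, I would establish $L^\infty$-boundedness. Take $u \in S(\lambda)$, so $u \geq 0$, $u \not\equiv 0$, and $u$ is a weak solution. Since $g(z,\cdot)$ has almost critical growth and $f(z,\cdot)$ is $(p-1)$-sublinear near $+\infty$ with $f(z,0)=0$, I would run a standard Moser iteration (or invoke a Winkert-type $L^\infty$ estimate for Robin problems) to conclude $u \in L^\infty(\Omega)$. Set $\rho = \|u\|_\infty$.

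With $u$ bounded, hypotheses $H(f)(i)$ and $H(g)(i)$ give $\lambda f(\cdot,u(\cdot)) + g(\cdot,u(\cdot)) \in L^\infty(\Omega)$, so the right-hand side of \eqref{PL} is essentially bounded. Combined with hypotheses $H(a)$ and $H(\beta)$, Lieberman's nonlinear regularity theorem \cite{Ref11} yields $u \in C^{1,\alpha}(\overline{\Omega})$ for some $\alpha \in (0,1)$; hence $u \in C_+ \setminus \{0\}$.

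The crucial step is strict positivity. Using hypothesis $\widehat{H}_0(ii)$ with $B = \{\lambda\}$ and the bound $\rho$, there exists $\widehat{\xi}_\rho^\lambda > 0$ such that $x \mapsto \lambda f(z,x) + g(z,x) + \widehat{\xi}_\rho^\lambda\, x^{p-1}$ is nondecreasing on $[0,\rho]$ for a.a. $z \in \Omega$. Rewriting the equation as
\begin{equation*}
-{\rm div}\, a(\nabla u(z)) + [\xi(z) + \widehat{\xi}_\rho^\lambda]\, u(z)^{p-1} = \lambda f(z,u(z)) + g(z,u(z)) + \widehat{\xi}_\rho^\lambda\, u(z)^{p-1},
\end{equation*}
monotonicity and $f(z,0) = g(z,0) = 0$ imply that the right-hand side is nonnegative a.e.\ in $\Omega$. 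So $u$ satisfies the differential inequality
\begin{equation*}
-{\rm div}\, a(\nabla u(z)) + [\xi(z) + \widehat{\xi}_\rho^\lambda]\, u(z)^{p-1} \geq 0 \quad \text{a.e. in } \Omega,\quad u \geq 0, \ u \not\equiv 0.
\end{equation*}
The nonlinear strong maximum principle of Pucci--Serrin \cite{Ref23} (pp.\ 111, 120), applicable thanks to $H(a)(i)$--$(iii)$, then gives $u(z) > 0$ for every $z \in \Omega$. Finally, to get strict positivity on $\partial\Omega$, I would argue by contradiction: if $u(z_0) = 0$ for some $z_0 \in \partial\Omega$, the boundary version of the Pucci--Serrin principle (Hopf-type) forces $\frac{\partial u}{\partial n_a}(z_0) < 0$, but the Robin boundary condition yields $\frac{\partial u}{\partial n_a}(z_0) = -\beta(z_0) u(z_0)^{p-1} = 0$, a contradiction. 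Hence $u(z) > 0$ on all of $\overline{\Omega}$, i.e., $u \in D_+$.

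The main technical obstacle is the initial $L^\infty$-bound in the presence of the almost critical term $g$: the usual Moser iteration works but requires care since the embedding $W^{1,p}(\Omega) \hookrightarrow L^{p^\ast}(\Omega)$ is only continuous, not compact. The key is that $H(g)(ii)$ provides a sub-critical rate $g(z,x)/x^{p^\ast-1} \to 0$, which suffices to close the iteration and absorb the critical exponent.
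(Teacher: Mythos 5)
Your proposal is correct and follows essentially the same route as the paper: an $L^\infty$ bound (the paper cites Proposition 7 of Papageorgiou--R\u{a}dulescu \cite{Ref16} rather than redoing the Moser iteration), Lieberman regularity to get $u\in C^{1,\gamma}(\overline{\Omega})$, hypothesis $\widehat{H}_0\,(ii)$ with $B=\{\lambda\}$ to produce the differential inequality $-\mathrm{div}\,a(\nabla u)+[\xi(z)+\widehat{\xi}^B_\rho]u^{p-1}\geq 0$, and then the Pucci--Serrin strong maximum principle together with the boundary point lemma to conclude $u\in D_+$. Your explicit contradiction argument on $\partial\Omega$ via the Robin condition is just an unpacking of the paper's appeal to the Boundary Point Lemma.
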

	
	\begin{proof}
		Of course the result is trivially true if $S(\lambda)= \emptyset$. 
		
		So, suppose that $S(\lambda) \neq \emptyset$ and let $u \in S(\lambda)$. Then
		\begin{equation}\label{eq4} 
		\begin{cases}
		- \mbox{div}\, a(\nabla u(z)) + \xi(z) u(z)^{p-1}= \lambda f(z,u(z)) + g(z,u(z))  \mbox{ for a.a. }z \in \Omega,& \\
		\quad \dfrac{\partial u}{\partial n_a} + \beta (z)u^{p-1}=0  \mbox{ on }\partial \Omega,
		\end{cases}
		\end{equation} 
		(see Papageorgiou-R\v{a}dulescu \cite{Ref15}).
		
		From \eqref{eq4} and Proposition 7 of Papageorgiou-R\v{a}dulescu \cite{Ref16}, we have $u \in L^\infty(\Omega).$
		
		Apply the regularity theory of Lieberman \cite{Ref11} (p. 320), to obtain that 
		$$u \in C^{1,\gamma}(\overline{\Omega}) \quad \mbox{for some } \gamma \in (0,1).$$
		
		Let $\rho=\|u\|_{C^1(\overline{\Omega})}$, $B=\{\lambda\}$ and let $\widehat{\xi}^B_\rho>0$ be as postulated by hypothesis $\widehat{H}_0(ii)$. We have
		\begin{align*}
			& - {\rm div \ } a(\nabla u(z)) +[\xi(z)+\widehat{\xi}^B_\rho]u(z)^{p-1} \geq 0 \quad \mbox{for a.a. } z \in \Omega,\\ \Rightarrow \quad & {\rm div \ } a(\nabla u(z)) \leq [\|\xi\|_\infty+\widehat{\xi}^B_\rho]u(z)^{p-1} \quad \mbox{for a.a. } z \in \Omega.
		\end{align*}
		
		Using the nonlinear maximum principle of Pucci-Serrin \cite{Ref23} (Theorem 5.4.1, p. 111), we have $$u(z) >0 \quad \mbox{for all } z \in \Omega.$$
		
		Finally, invoking the Boundary Point Lemma of Pucci-Serrin \cite{Ref23} (Theorem 5.5.1, p. 120), we conclude that $u \in D_+$.
		
		Therefore for every $\lambda>0$, $S(\lambda) \subseteq D_+$.
	\end{proof}
	
	Next, we show the nonemptiness of $\mathcal{L}$.
	
	\begin{prop}
		\label{P10} If hypotheses $\widetilde{H}$ hold, then $\mathcal{L} \neq \emptyset$.
	\end{prop}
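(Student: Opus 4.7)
The plan is to produce, for some sufficiently large $\lambda>0$, a nontrivial nonnegative critical point of a truncated $C^1$-energy and to upgrade it to a positive solution of $(P_\lambda)$ via Proposition~\ref{P9}. Fix $\rho>0$ (to be tuned later) and introduce the bounded Carath\'eodory truncation
\[
\widehat h_\lambda(z,x)=
\begin{cases} 0, & x<0,\\ \lambda f(z,x)+g(z,x), & 0\le x\le \rho,\\ \lambda f(z,\rho)+g(z,\rho), & x>\rho,\end{cases}
\]
its antiderivative $\widehat H_\lambda(z,x)=\int_0^x \widehat h_\lambda(z,s)\,ds$, and the functional
\[
\widehat\varphi_\lambda(u)=\int_\Omega \widehat G(\nabla u)\,dz+\frac{1}{p}\int_\Omega \xi|u|^p\,dz+\frac{1}{p}\int_{\partial\Omega}\beta|u|^p\,d\sigma-\int_\Omega \widehat H_\lambda(z,u)\,dz.
\]
Since $\widehat H_\lambda(z,\cdot)$ has at most linear growth, Corollary~\ref{cor3} together with Lemmas~\ref{Lem5}--\ref{Lem6} (available under $H_0$) renders $\widehat\varphi_\lambda$ coercive and sequentially weakly lower semicontinuous on $W^{1,p}(\Omega)$, so it attains its infimum at some $u_\lambda\in W^{1,p}(\Omega)$ with $\widehat\varphi_\lambda'(u_\lambda)=0$.

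Nontriviality $u_\lambda\neq 0$ is extracted from the partially concave structure. Fix a nonzero $v\in C_c^\infty(\mathcal{U})$ with $0\le v\le \delta_0$. Exploiting the upper bound $\widehat G_0(\tau)\le(c_\ast+\varepsilon)\tau^q$ near $\tau=0$ from $H(a)(iv)$, the lower bound $F(z,x)\ge (c_9/q)x^q$ on $\overline{\mathcal{U}}$ from $H(f)(iii)$, and $G\ge 0$, one obtains for small $t>0$
\[
\widehat\varphi_\lambda(tv)\le(c_\ast+\varepsilon)t^q\|\nabla v\|_q^q+O(t^p)-\frac{\lambda c_9}{q}t^q\|v\|_{L^q(\mathcal{U})}^q.
\]
Choosing $\lambda$ large enough that the coefficient of $t^q$ is strictly negative, and using $q<p$, yields $\widehat\varphi_\lambda(tv)<0$ for small $t>0$, hence $\widehat\varphi_\lambda(u_\lambda)<0$ and $u_\lambda\neq 0$. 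Testing $\widehat\varphi_\lambda'(u_\lambda)=0$ against $-u_\lambda^-$, together with the monotonicity of $A$ (Proposition~\ref{prop4}), the sign conditions $\xi,\beta\ge 0$, and the vanishing of $\widehat h_\lambda(z,\cdot)$ on $(-\infty,0]$, forces $u_\lambda\ge 0$ a.e.\ in $\Omega$.

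To conclude, $u_\lambda$ must be shown to satisfy the original equation. The nonlinear regularity of Papageorgiou--R\v{a}dulescu~\cite{Ref16} places $u_\lambda\in L^\infty(\Omega)\cap C^{1,\gamma}(\overline{\Omega})$, and one then needs $u_\lambda\le \rho$ pointwise so that the truncation is inactive and $\widehat h_\lambda(z,u_\lambda)=\lambda f(z,u_\lambda)+g(z,u_\lambda)$. Once this holds, $u_\lambda\in S(\lambda)\subseteq D_+$ by Proposition~\ref{P9}, whence $\lambda\in\mathcal{L}$ and $\mathcal{L}\neq\emptyset$. The main obstacle is precisely the synchronization of $\rho$ with an a priori $L^\infty$-bound on $u_\lambda$: the almost-critical superlinear growth of $g$ (hypothesis $H(g)(ii)$) prevents a naive Moser iteration from closing the estimate, and I would expect to resolve this via a weak comparison against a suitable supersolution built from the bound $\|\widehat h_\lambda\|_\infty\le \lambda \|f(\cdot,\rho)\|_\infty+\|g(\cdot,\rho)\|_\infty$, leveraging the $(p-1)$-sublinearity of $f$ at infinity ($H(f)(ii)$) to keep the minimizer below the threshold $\rho$.
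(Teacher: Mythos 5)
Your overall scheme (truncate the reaction, minimize the truncated functional, show the minimizer is nontrivial and nonnegative, then remove the truncation) matches the paper's, but the way you set up the truncation creates a gap that you yourself flag and that, as arranged, cannot be closed. Your nontriviality argument forces $\lambda$ to exceed a fixed positive threshold $\lambda_0 = q(c_\ast+\varepsilon)\|\nabla v\|_q^q/\bigl(c_9\|v\|_{L^q(\mathcal{U})}^q\bigr)$, and no choice of $v\in C_c^\infty(\mathcal{U})$ makes this threshold arbitrarily small (the quotient is bounded below by the first Dirichlet $q$-eigenvalue of $\mathcal{U}$). On the other hand, Proposition \ref{P14} shows that for all large $\lambda$ problem \eqref{PL} has \emph{no} positive solution, so a scheme that only produces candidates for large $\lambda$ is guaranteed to break down exactly at the step you left open: for such $\lambda$ the minimizer of your truncated functional must pierce the level $\rho$, and no comparison argument can prevent it. Concretely, the supersolution $w$ you propose, solving $-\mathrm{div}\,a(\nabla w)+\xi w^{p-1}=\lambda\|f(\cdot,\rho)\|_\infty+\|g(\cdot,\rho)\|_\infty$, satisfies $\|w\|_\infty\le c_{13}\bigl(\lambda\|f(\cdot,\rho)\|_\infty+\|g(\cdot,\rho)\|_\infty\bigr)^{1/(p-1)}$, and getting this below $\rho$ requires the datum to be of order $\rho^{p-1}$, hence requires $\lambda$ small once $\rho$ is fixed (note that $H(f)\,(i),(iii)$ give no useful upper bound forcing $\|f(\cdot,\rho)\|_\infty$ to vanish as $\rho\to0^+$). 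Small $\lambda$ and large $\lambda$ cannot both hold.

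The paper resolves this by truncating not at a constant but at a genuine supersolution: it first solves the auxiliary problem $-\mathrm{div}\,a(\nabla\overline{u})+\xi|\overline{u}|^{p-2}\overline{u}=\eta$ by surjectivity of the maximal monotone coercive operator $V$, obtains a unique $\overline{u}\in D_+$ with $\|\overline{u}\|_\infty\le c_{13}\eta^{1/(p-1)}$, and then chooses $\eta\in(0,1)$ and $\lambda>0$ small so that $\lambda f(z,\overline{u}(z))+g(z,\overline{u}(z))<\eta$ a.e.\ (using $g(z,x)\le\varepsilon x^{p-1}+c_{14}x^{p^\ast-1}$ together with $p<p^\ast$). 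Testing the Euler equation of the truncated functional with $(u_\lambda-\overline{u})^+$ then yields $u_\lambda\in[0,\overline{u}]$ with no a priori $L^\infty$ estimate needed, and nontriviality is arranged for this same small $\lambda$ via a cut-off function equal to $1$ on a neighbourhood of $\mathcal{U}$. To repair your proof you must either adopt this supersolution truncation, or else redo the nontriviality step so that it works for the small values of $\lambda$ for which your comparison can succeed; as written, the argument is incomplete.
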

	
	\begin{proof}
		Let $\eta>0$ and consider the following auxiliary Robin problem 	
		\begin{equation} \label{eq5} 
		\begin{cases}
		- \mbox{div}\, a(\nabla u(z)) + \xi(z) |u(z)|^{p-2}u(z)= \eta  \mbox{ in }\Omega,& \\
		\quad \dfrac{\partial u}{\partial n_a} + \beta (z)|u|^{p-2} u=0  \mbox{ on }\partial \Omega.
		\end{cases}
		\end{equation}
		We introduce the operator $V: W^{1,p}(\Omega) \to W^{1,p}(\Omega)^\ast$ defined by
		$$\langle V(u),h \rangle = \langle A(u),h \rangle + \int_\Omega \xi(z)|u|^{p-2}uhdz +$$ $$  \int_{\partial \Omega} \beta(z)|u|^{p-2}uhd\sigma \quad \mbox{for all } u,h \in W^{1,p}(\Omega),$$
		which is continuous, monotone (see Proposition \ref{prop4}), hence it is also maximal monotone. Also, we have 
		\begin{align*} \langle V(u),h \rangle & \geq \frac{c_1}{p-1}\|\nabla u \|^p_p + \int_\Omega \xi(z)|u|^{p}dz +  \int_{\partial \Omega} \beta(z)|u|^{p}d\sigma \quad \mbox{(see Lemma \ref{Lem2})}\\ & \geq c_{11}\|u\|^p \quad \mbox{for some $c_{11}>0$ (see Lemmata \ref{Lem5} and \ref{Lem6})},\\ \Rightarrow \quad & V(\cdot) \mbox{ is coercive}. 
		\end{align*}
		
		A maximal monotone coercive operator is surjective (see Gasi\'nski-Papa\-geor\-giou \cite{Ref6}, Corollary 3.2.31, p. 319). So, we can find $\overline{u} \in W^{1,p}(\Omega)$, $\overline{u}\neq 0$ such that
		\begin{align} \nonumber & V(\overline{u})=\eta, \\ \label{eq6} \Rightarrow \quad &  \langle A(\overline{u}),h \rangle + \int_\Omega \xi(z)|\overline{u}|^{p-2}\overline{u}hdz +  \int_{\partial \Omega} \beta(z)|\overline{u}|^{p-2}\overline{u}hd\sigma = \eta  \int_\Omega h dz \\ & \nonumber \hskip 8cm\mbox{for all } h \in W^{1,p}(\Omega).
		\end{align}
		In \eqref{eq6} we choose $h=-u^- \in W^{1,p}(\Omega)$. Then using Lemma \ref{Lem2}, we obtain 
		\begin{align*} &
			\frac{c_1}{p-1}\|\nabla \overline{u}^- \|^p_p + \int_\Omega \xi(z)(\overline{u}^-)^{p}dz +  \int_{\partial \Omega} \beta(z)(\overline{u}^-)^{p}d\sigma \leq 0,\\ \Rightarrow \quad & c_{12} \|\overline{u}\|^p \leq 0 \quad \mbox{for some $c_{12} >0$ (see Lemmata \ref{Lem5} and \ref{Lem6}),}\\ \Rightarrow \quad & \overline{u} \geq 0, \, \overline{u} \neq 0.
		\end{align*}	
		
		From \eqref{eq6} we obtain
		\begin{equation*}  
			\begin{cases}
				- \mbox{div}\, a(\nabla  \overline{u}(z)) + \xi(z)  \overline{u}(z)^{p-1}= \eta  \mbox{ for a.a.  }z \in \Omega,& \\
				\quad \dfrac{\partial  \overline{u}}{\partial n_a} + \beta (z) \overline{u}^{p-1} =0  \mbox{ on }\partial \Omega.
			\end{cases}
		\end{equation*}
		(see Papageorgiou-R\v{a}dulescu \cite{Ref15}). 
		
		As before (see the proof of Proposition \ref{P9}), using the nonlinear regularity theory, we infer that $\overline{u} \in C_+ \setminus \{0\}$.
		
		In fact, we have
		\begin{align*}
			&	\mbox{div}\, a(\nabla  \overline{u}(z)) \leq \|\xi\|_\infty  \overline{u}(z)^{p-1} \quad  \mbox{for a.a.  $z \in \Omega$ (see hypothesis $H(\xi)$),} \\ \Rightarrow \quad & \overline{u} \in D_+ \mbox{ (see Pucci-Serrin \cite{Ref23}, pp. 111, 120)}.
		\end{align*}	
		
		Since $V(\cdot)$ is strictly monotone (see hypothesis $H_0$), the solution $\overline{u} \in C_+ \setminus \{0\}$ is unique. Using Proposition 7 of Papageorgiou-R\v{a}dulescu \cite{Ref16}, we have 
		\begin{equation}
		\label{eq7} \|\overline{u}\|_\infty \leq c_{13} \eta^{\frac{1}{p-1}} \quad \mbox{for some } c_{13} >0.
		\end{equation}
		
		Hypotheses $H(g)$ imply that given $\varepsilon >0$, we can find $c_{14} =c_{14}(\varepsilon)>0$ such that 
		\begin{equation}
		\label{eq8} g(z,x) \leq \varepsilon x^{p-1}+c_{14} x^{p^\ast-1} \quad \mbox{for a.a. $z \in \Omega$
		and
		 all $x \geq 0$}.
		\end{equation}
		
		Combining \eqref{eq7} and \eqref{eq8} we have 
		$$g(z, \overline{u}(z)) \leq \varepsilon  \overline{u}(z)^{p-1}+c_{14}  \overline{u}(z)^{p^\ast-1}  \leq \varepsilon c_{13}\eta + c_{14} c_{13} \eta^{\frac{p^\ast-1}{p-1}} .$$
		
		Since $p < p^\ast$, choosing $\eta \in (0,1)$ and $\varepsilon >0$ small, we can have
		\begin{equation}
		\label{eq9} g(z, \overline{u}(z))< \frac{\eta}{2} \quad \mbox{for a.a. } z \in \Omega.
		\end{equation}
		
		Notice that $0 \leq f(z, \overline{u}(z))\leq c_{15}$ for a.a. $z \in \Omega$ and some $c_{15} >0$ (see hypothesis $H(f) \, (i)$). So, choosing $\lambda >0$ small we can have that 
		\begin{equation}
		\label{eq10} \lambda f(z, \overline{u}(z))< \frac{\eta}{2} \quad \mbox{for a.a. } z \in \Omega.
		\end{equation}
		
		It follows from \eqref{eq9} and \eqref{eq10}  that
		\begin{equation}
		\label{eq11} - \mbox{div}\, a(\nabla  \overline{u}(z)) + \xi(z)  \overline{u}(z)^{p-1}= \eta > \lambda f(z, \overline{u}(z)) + g(z, \overline{u}(z)) \quad  \mbox{for a.a.  }z \in \Omega.
		\end{equation}
		
		We introduce the following truncation of the reaction in problem \eqref{PL}
		\begin{equation}
		\label{eq12} k_\lambda(z,x)=\begin{cases} \lambda f(z, x^+)+ g(z, x^+) & \mbox{if } x \leq \overline{u}(z),\\ \lambda f(z, \overline{u}(z))+g(z, \overline{u}(z)) & \mbox{if } \overline{u}(z) < x.\end{cases}
		\end{equation}
		
		This is a Carath\'eodory function. We set $K_\lambda(z,x)=\int_0^xk_\lambda(z,s)ds$ and consider the $C^1$-functional $\psi_\lambda: W^{1,p}(\Omega) \to \mathbb{R}$ defined by
		\begin{align*}\psi_\lambda(u) & =\int_\Omega \widehat{G} (\nabla u)dz +\frac{1}{p}\int_\Omega \xi(z)|u|^p dz+\frac{1}{p}\int_{\partial \Omega}\beta(z)|u|^p d \sigma-\int_\Omega K_\lambda(z,u)dz \\ & \hskip 9cm \mbox{for all } u \in W^{1,p}(\Omega).\end{align*}
		
		We have 
		\begin{align*}\psi_\lambda(u) &\geq \frac{1}{p} \left[\frac{c_1}{p-1}\|\nabla u\|^p_p +  \int_\Omega \xi(z)|u|^p dz+\int_{\partial \Omega}\beta(z)|u|^p d \sigma \right]-\int_\Omega K_\lambda(z,u)dz \\ & \geq \frac{c_{15}}{p}\|u\|^p-c_{16} \quad \mbox{for some $c_{15},c_{16}>0$ (see Lemmata \ref{Lem5} and \ref{Lem6}),} \\ \Rightarrow \quad & \psi_\lambda(\cdot) \mbox{ is coercive}.\end{align*}
		
		Also, by the Sobolev embedding theorem and the compactness of the trace map, we see that $\psi_\lambda(\cdot)$ is sequentially weakly lower semicontinuous. So, by the Weierstrass-Tonelli theorem, we can find $u_\lambda \in W^{1,p}(\Omega)$ such that
		\begin{equation}
		\label{eq13} \psi_\lambda(u_\lambda)= \inf \{\psi_\lambda(u): u \in W^{1,p}(\Omega)\}.
		\end{equation}
		
		Let $V \subseteq \Omega$ be open with $C^1$-boundary such that $\overline{\mathcal{U}} \subseteq V \subseteq \overline{V} \subseteq \Omega$. If $\delta >0$, we define 
		$$V_\delta= \{z \in V: d(z, \partial V)< \delta \}.$$
		
		We can always choose $\delta >0$ small such that 
		\begin{equation}
		\label{eq14}\mathcal{U} \subseteq \overline{V} \setminus V_\delta.
		\end{equation}
		
		We consider a function $\widehat{h} \in C^1(\overline{\Omega})$ such that
		\begin{equation}
		\label{eq15} 0 \leq \widehat{h}(z) \leq 1 \mbox{ for all $z \in \overline{\Omega}$ and } \widehat{h} \Big|_{\overline{V} \setminus V_\delta} =1, \, \widehat{h} \Big|_{\overline{\Omega} \setminus \overline{V}}=0.
		\end{equation}
		
		Hypothesis $H(a)\, (iv)$ implies that we can find $c_{17}\geq c_\ast$ and $\delta \in (0,\delta_0)$  (see hypothesis $H(f) \, (iii)$) such that 
		\begin{equation*}
		\label{eq16} \widehat{G}(y) \leq c_{17}|y|^q \quad \mbox{for all } |y| \leq \delta.
		\end{equation*}
		
		Since $\overline{u} \in D_+$, we can find $t \in (0,1)$ small such that 
		\begin{equation}
		\label{eq17} t \widehat{h} \in (0,\overline{u}] \quad \mbox{and} \quad 0\leq t \widehat{h}(z) \leq \delta \quad \mbox{for all } z \in \overline{V}.
		\end{equation}
		
		We have
		\begin{align*}
			\psi_\lambda(t \widehat{h})&  =\int_{V_\delta} \widehat{G} (t \nabla \widehat{h})dz +\frac{t^p}{p}\int_\Omega \xi(z)\widehat{h}^p dz+\frac{t^p}{p}\int_{\partial \Omega}\beta(z)\widehat{h}^p d \sigma-
		\\&
			\int_\Omega K_\lambda(z,t \widehat{h})dz \mbox{ (see \eqref{eq15})}
			 \leq t^qc_{17} \int_{V_\delta}|\nabla \widehat{h}|^q dz +
			 \frac{t^p}{p} \int_\Omega \xi(z)\widehat{h}^p dz +
			 \\ &
			 \frac{t^p}{p}\int_{\partial \Omega}\beta(z)\widehat{h}^p d \sigma - \frac{\lambda c_9 t^q}{q} \int_{\mathcal{U}}\widehat{h}^q dz \ \ \
			  \mbox{(see \eqref{eq14}, \eqref{eq17} and  $H(f) \, (iii)$)}\\ & = t^q \left[c_{17}\int_{V_\delta}|\nabla \widehat{h}|^q dz - \frac{\lambda c_9}{q} \int_{\mathcal{U}}\widehat{h}^q dz\right] + \frac{t^p}{p} \left[\int_\Omega \xi(z)\widehat{h}^p dz + \int_{\partial \Omega}\beta(z)\widehat{h}^p d \sigma \right]. \end{align*}
		
		We see that if we choose $\delta >0$ small (so that $|V_\delta|_N$ is small) and $t \in (0,1)$ small, too, since $q<p$, we will have
		\begin{align*}
			& \psi_\lambda(t \widehat{h}) <0, \\ \Rightarrow \quad & \psi_\lambda(u_\lambda) <0 = \psi_\lambda(0) \quad \mbox{(see \eqref{eq13})},\\ \Rightarrow \quad & u_\lambda \neq 0.
		\end{align*}
		
		From \eqref{eq13} we have \begin{align}
			&\nonumber \psi'_\lambda(u_\lambda)=0, \\ \label{eq18}\Rightarrow \quad & \langle A(u_\lambda),h \rangle + \int_\Omega \xi(z)|u_\lambda|^{p-2}u_\lambda hdz +\int_{\partial \Omega} \beta(z)|u_\lambda|^{p-2}u_\lambda hd \sigma  =\int_\Omega k_\lambda(z,u_\lambda)hdz \\ \nonumber & \hskip 10cm \mbox{for all $h \in W^{1,p}(\Omega)$.}
		\end{align}
		
		In \eqref{eq18} we choose $h=-u_\lambda^- \in W^{1,p}(\Omega)$. Then
		\begin{align*}
			&\frac{c_1}{p-1}\|\nabla u_\lambda^-\|^p_p +\int_\Omega \xi(z)(u_\lambda^-)^pdz +\int_{\partial \Omega} \beta(z)(u_\lambda^-)^p d \sigma = 0,\\ \Rightarrow \quad & c_{18}\|u_\lambda^-\|^p\leq 0 \quad \mbox{for some $c_{18}>0$ (see Lemmata \ref{Lem5} and \ref{Lem6})},\\ \Rightarrow \quad & u_\lambda \geq 0, \, u_\lambda \neq 0.
		\end{align*}
		
		Also, if in \eqref{eq18} we choose $h=(u_\lambda-\overline{u})^+ \in W^{1,p}(\Omega)$, then
		\begin{align*}
			&\langle A(u_\lambda),(u_\lambda-\overline{u})^+ \rangle + \int_\Omega \xi(z)u_\lambda^{p-1}(u_\lambda-\overline{u})^+dz +\int_{\partial \Omega} \beta(z)u_\lambda^{p-1}(u_\lambda-\overline{u})^+d \sigma \\& =\int_\Omega [ \lambda f(z, \overline{u})+ g(z, \overline{u})](u_\lambda-\overline{u})^+dz\\ & \leq \eta \int_\Omega (u_\lambda-\overline{u})^+dz  \quad \mbox{(see \eqref{eq11})}
	\\ &
		= \langle A(\overline{u}),(u_\lambda-\overline{u})^+ \rangle + \int_\Omega \xi(z)\overline{u}^{p-1}(u_\lambda-\overline{u})^+dz +\int_{\partial \Omega} \beta(z)\overline{u}^{p-1}(u_\lambda-\overline{u})^+d \sigma \\ \Rightarrow \quad & 
			\langle A(u_\lambda) -A(\overline{u}),(u_\lambda-\overline{u})^+ \rangle + \int_\Omega \xi(z)(u_\lambda^{p-1} -\overline{u}^{p-1})(u_\lambda-\overline{u})^+dz\\ & \quad  +\int_{\partial \Omega} \beta(z)(u_\lambda^{p-1} -\overline{u}^{p-1})(u_\lambda-\overline{u})^+d \sigma \leq 0\\ \Rightarrow \quad & u_\lambda \leq \overline{u} \quad \mbox{(see Lemmata \ref{Lem5}, \ref{Lem6})}.
		\end{align*}
		
		So, we have proved that
		\begin{equation}
		\label{eq19} u_\lambda \in [0, \overline{u}], \, u_\lambda \neq 0.
		\end{equation}
		
		On account of \eqref{eq12} and \eqref{eq19}, equation \eqref{eq18} becomes
		\begin{align*} &\langle A(u_\lambda),h \rangle + \int_\Omega \xi(z)u_\lambda^{p-1} hdz +\int_{\partial \Omega} \beta(z)u_\lambda^{p-1} hd \sigma \\ & =\int_\Omega [\lambda f(z,u_\lambda)+g(z,u_\lambda)]hdz \quad \mbox{for all $h \in W^{1,p}(\Omega)$,}\\ \Rightarrow \quad & u_\lambda \in S(\lambda) \subseteq D_+ \quad \mbox{(see Proposition \ref{P9}) and so $\lambda \in \mathcal{L} \neq \emptyset$}.
		\end{align*}
	\end{proof}	
	
	In the next proposition, we prove a structural property of $\mathcal{L}$, namely we show that $\mathcal{L}$ is an interval.
	
	\begin{prop}
		\label{P11} If hypotheses $\widetilde{H}$ hold, $\lambda \in \mathcal{L}$ and $0 < \mu < \lambda$, then $\mu \in \mathcal{L}$.
	\end{prop}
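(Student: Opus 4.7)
The plan is to mimic the auxiliary-truncation argument used in the proof of Proposition \ref{P10}, replacing the ``barrier'' $\overline{u}$ there by the already-available $\lambda$-solution $u_\lambda\in S(\lambda)\subseteq D_+$. Since $f,g\ge 0$ and $\mu<\lambda$, the function $u_\lambda$ is a supersolution for problem $(P_\mu)$: a.e.\ in $\Omega$ it satisfies
\[
-\mathrm{div}\,a(\nabla u_\lambda)+\xi(z)u_\lambda^{p-1}=\lambda f(z,u_\lambda)+g(z,u_\lambda)\ge \mu f(z,u_\lambda)+g(z,u_\lambda),
\]
together with the Robin boundary condition. This is the object that will confine the minimizer from above.

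Next I would introduce the truncation
\[
k_\mu(z,x)=\begin{cases}\mu f(z,x^+)+g(z,x^+) & \text{if } x\le u_\lambda(z),\\[2pt] \mu f(z,u_\lambda(z))+g(z,u_\lambda(z)) & \text{if } x>u_\lambda(z),\end{cases}
\]
set $K_\mu(z,x)=\int_0^x k_\mu(z,s)\,ds$, and form the associated truncated functional
\[
\psi_\mu(u)=\int_\Omega\widehat{G}(\nabla u)\,dz+\tfrac{1}{p}\!\int_\Omega\xi(z)|u|^p\,dz+\tfrac{1}{p}\!\int_{\partial\Omega}\beta(z)|u|^p\,d\sigma-\int_\Omega K_\mu(z,u)\,dz.
\]
Because $k_\mu(z,\cdot)$ is bounded uniformly (by $a_{\|u_\lambda\|_\infty}$ from $H(f)(i),H(g)(i)$), Corollary \ref{cor3} together with Lemmata \ref{Lem5} and \ref{Lem6} gives coercivity of $\psi_\mu$; sequential weak lower semicontinuity follows from the convexity of $\widehat{G}$, the Sobolev embedding, and the compactness of the trace map. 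Hence there exists a global minimizer $u_\mu\in W^{1,p}(\Omega)$.

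To prove $u_\mu\neq 0$ I would use the partially concave geometry: choose the same cutoff function $\widehat{h}\in C^1(\overline{\Omega})$ as in Proposition \ref{P10} (supported in a neighborhood of $\overline{\mathcal{U}}$ with $\widehat{h}\equiv 1$ on $\overline{V}\setminus V_\delta$), use $H(a)(iv)$ to bound $\widehat{G}(t\nabla\widehat{h})\le c_{17}t^q|\nabla\widehat{h}|^q$, and use $H(f)(iii)$ to get $K_\mu(z,t\widehat{h})\ge \mu c_9 t^q\widehat{h}^q/q$ on $\mathcal{U}$. Since $q<p$, for $t>0$ sufficiently small one obtains $\psi_\mu(t\widehat{h})<0$, so $\psi_\mu(u_\mu)<0=\psi_\mu(0)$ and $u_\mu\neq 0$.

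Finally, from $\psi'_\mu(u_\mu)=0$ I would test against $-u_\mu^-$ and conclude $u_\mu\ge 0$ via Lemmata \ref{Lem5}, \ref{Lem6} and Lemma \ref{Lem2}(c); then testing against $(u_\mu-u_\lambda)^+$ and using the equation satisfied by $u_\lambda$, together with $\lambda f(z,u_\lambda)+g(z,u_\lambda)\ge \mu f(z,u_\lambda)+g(z,u_\lambda)$, yields
\[
\langle A(u_\mu)-A(u_\lambda),(u_\mu-u_\lambda)^+\rangle+\int_\Omega\xi(z)(u_\mu^{p-1}-u_\lambda^{p-1})(u_\mu-u_\lambda)^+\,dz\le 0,
\]
plus the analogous nonnegative boundary term; strict monotonicity of $A$ together with Lemmata \ref{Lem5}, \ref{Lem6} forces $(u_\mu-u_\lambda)^+=0$, i.e.\ $u_\mu\le u_\lambda$. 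Hence $u_\mu\in[0,u_\lambda]\setminus\{0\}$, the truncation is inactive on $u_\mu$, so $u_\mu$ is a weak solution of $(P_\mu)$, and Proposition \ref{P9} places $u_\mu\in D_+$. Thus $\mu\in\mathcal{L}$. The step I expect to require the most care is the upper-bound comparison $u_\mu\le u_\lambda$, because it is the place where the inequality $\mu<\lambda$ (and nonnegativity of $f$) is genuinely used; everything else is parallel to Proposition \ref{P10}.
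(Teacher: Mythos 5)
Your proposal is correct and follows essentially the same route as the paper: truncate the reaction of $(P_\mu)$ at the solution $u_\lambda\in S(\lambda)\subseteq D_+$, minimize the coercive truncated functional, show the minimizer is nontrivial via the cutoff function $\widehat{h}$ together with $H(a)\,(iv)$ and $H(f)\,(iii)$, and then squeeze it into $[0,u_\lambda]$ by testing with $-u_\mu^-$ and $(u_\mu-u_\lambda)^+$, using $\mu<\lambda$ and $f\ge 0$. No gaps; this is the paper's argument.
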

	
	\begin{proof}
		Since $\lambda \in \mathcal{L}$, there is $u_\lambda \in S(\lambda) \subseteq D_+$ (see Proposition \ref{P9}). We consider the following truncation of the reaction in problem $(P_\mu)$
		\begin{equation}
		\label{eq20} k_\mu(z,x)=\begin{cases} \mu f(z, x^+)+ g(z, x^+) & \mbox{if } x \leq u_\lambda(z),\\ \mu f(z, u_\lambda(z))+g(z,  u_\lambda(z)) & \mbox{if }  u_\lambda(z) < x.\end{cases}
		\end{equation}
		
		This is a Carath\'eodory function. We set $K_\mu(z,x)=\int_0^xk_\mu(z,s)ds$ and consider the $C^1$-functional $\widehat{\varphi}_\mu: W^{1,p}(\Omega) \to \mathbb{R}$ defined by
		\begin{align*}\widehat{\varphi}_\mu(u) & =\int_\Omega \widehat{G} (\nabla u)dz +\frac{1}{p}\int_\Omega \xi(z)|u|^p dz+\frac{1}{p}\int_{\partial \Omega}\beta(z)|u|^p d \sigma-\int_\Omega K_\mu(z,u)dz \\ & \hskip 9cm \mbox{for all } u \in W^{1,p}(\Omega).\end{align*}
		
		As before we have that
		\begin{align*} & \widehat{\varphi}_\mu(\cdot) \mbox{ is coercive (see \eqref{eq20})},\\ & \widehat{\varphi}_\mu(\cdot) \mbox{ is sequentially weakly lower semicontinuous.} \end{align*}
		
		So, we can find $u_\mu \in W^{1,p}(\Omega)$ such that
		\begin{equation}
		\label{eq21}\widehat{\varphi}_\mu(u_\mu)= \inf \{\widehat{\varphi}_\mu(u): u \in W^{1,p}(\Omega)\}.
		\end{equation}
		
		Reasoning as in the proof of Proposition \ref{P10}, using the cut-off function $\widehat{h}$, we show that 
		\begin{align*}
			& \widehat{\varphi}_\mu(u_\mu) <0=\widehat{\varphi}_\mu(0), \\ \Rightarrow \quad & u_\mu \neq 0.
		\end{align*}
		
		From \eqref{eq21} we have \begin{align}
			&\nonumber\widehat{\varphi}^\prime_\mu(u_\mu)=0, \\ \label{eq22}\Rightarrow \quad & \langle A(u_\mu),h \rangle + \int_\Omega \xi(z)|u_\mu|^{p-2}u_\mu hdz +\int_{\partial \Omega} \beta(z)|u_\mu|^{p-2}u_\mu hd \sigma  =\int_\Omega k_\mu(z,u_\mu)hdz \\ \nonumber & \hskip 10cm \mbox{for all $h \in W^{1,p}(\Omega)$.}
		\end{align}
		
		In \eqref{eq22} we first choose $h=-u_\mu^- \in W^{1,p}(\Omega)$ and infer that
		$$u_\mu \geq 0, \, u_\mu \neq 0.$$
		
		Next, in \eqref{eq22} we choose $h=(u_\mu - u_\lambda)^+ \in W^{1,p}(\Omega)$. We have
		\begin{align*}
			&\langle A(u_\mu),(u_\mu-u_\lambda)^+ \rangle + \int_\Omega \xi(z)u_\mu^{p-1}(u_\mu-u_\lambda)^+dz +\int_{\partial \Omega} \beta(z)u_\mu^{p-1}(u_\mu-u_\lambda)^+d \sigma \\& =\int_\Omega [ \mu f(z, u_\lambda)+ g(z, u_\lambda)](u_\mu-u_\lambda)^+dz \quad \mbox{(see \eqref{eq20})} \\& \leq\int_\Omega [ \lambda f(z, u_\lambda)+ g(z, u_\lambda)](u_\mu-u_\lambda)^+dz \quad \mbox{(since $\lambda > \mu$)}\\ & =\langle A(u_\lambda),(u_\mu-u_\lambda)^+ \rangle + \int_\Omega \xi(z)u_\lambda^{p-1}(u_\mu-u_\lambda)^+dz +\int_{\partial \Omega} \beta(z)u_\lambda^{p-1}(u_\mu-u_\lambda)^+d \sigma \\ & \hskip 10cm \mbox{(since $u_\lambda \in S(\lambda)$),}\\ \Rightarrow \quad & u_\mu \leq u_\lambda.
		\end{align*}
		
		So, we have proved that
		\begin{align*}
			& u_\mu \in [0, u_\lambda], \, u_\mu \neq 0, \\ \Rightarrow \quad & u_\mu \in S(\mu) \subseteq D_+ \quad \mbox{(see \eqref{eq20}, \eqref{eq22} and Proposition \ref{P9})},\\ \Rightarrow \quad & \mu \in \mathcal{L}.
		\end{align*}
	\end{proof}	
	
	This proposition shows that $\mathcal{L}$ is an interval. An interesting byproduct of the above proof is the following corollary.
	
	\begin{cor}
		\label{C12} If hypotheses $\widetilde{H}$ hold, $\lambda \in \mathcal{L}$, $u_\lambda \in S(\lambda) \subseteq D_+$ and $0 < \mu < \lambda$, then $\mu \in \mathcal{L}$ and we can find $u_\mu \in S(\mu) \subseteq D_+$ such that $u_\lambda - u_\mu \in C_+ \setminus \{0\}$.
	\end{cor}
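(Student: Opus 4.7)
The plan is to read the proof of Proposition \ref{P11} ``one sentence further''. Starting from the given $u_\lambda \in S(\lambda) \subseteq D_+$, I would carry out verbatim the truncation and minimization performed there: define $k_\mu$ by cutting off the reaction at the level $u_\lambda$ (exactly as in \eqref{eq20}), form the coercive and sequentially weakly lower semicontinuous functional $\widehat{\varphi}_\mu$, and let $u_\mu \in W^{1,p}(\Omega)$ be its global minimizer. The cut-off function argument using $\widehat{h}$ from the proof of Proposition \ref{P10} again yields $\widehat{\varphi}_\mu(u_\mu) < 0 = \widehat{\varphi}_\mu(0)$, hence $u_\mu \neq 0$. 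Testing the Euler equation with $-u_\mu^-$ and with $(u_\mu-u_\lambda)^+$ as in the proof of Proposition \ref{P11} gives $u_\mu \in [0,u_\lambda]$ and $u_\mu \in S(\mu) \subseteq D_+$. Since $u_\mu,u_\lambda \in C^1(\overline{\Omega})$ and $u_\mu \leq u_\lambda$ pointwise, we immediately obtain $u_\lambda - u_\mu \in C_+$.

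The only genuinely new ingredient beyond Proposition \ref{P11} is to certify that $u_\mu \not\equiv u_\lambda$. I would argue by contradiction: if $u_\mu = u_\lambda$, then $u_\lambda$ satisfies both $(P_\lambda)$ and $(P_\mu)$ in the sense of \eqref{eq4}, and subtracting the two equations gives $(\lambda-\mu)\,f(\cdot,u_\lambda(\cdot)) \equiv 0$ a.e.\ in $\Omega$. Since $\lambda > \mu$, this forces $f(z,u_\lambda(z)) = 0$ a.e.\ in $\Omega$. However, $u_\lambda \in D_+$ together with the compactness of $\overline{\Omega}$ implies $s_0 := \min_{\overline{\Omega}} u_\lambda > 0$, and then hypothesis $H(f)\,(ii)$ supplies $\eta_{s_0} > 0$ with $f(z,u_\lambda(z)) \geq \eta_{s_0}$ for a.a.\ $z \in \Omega$, a contradiction. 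Hence $u_\lambda - u_\mu \in C_+ \setminus \{0\}$, as desired.

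There is no real obstacle here: everything structurally hard has been done inside the proof of Proposition \ref{P11}, and the strict-inequality upgrade rests entirely on the quantitative lower bound built into $H(f)\,(ii)$ combined with the fact that elements of $D_+$ are uniformly bounded away from zero on $\overline{\Omega}$. Consequently I expect the corollary to be stated in one short paragraph that merely harvests the construction already produced and appends the contradiction argument above.
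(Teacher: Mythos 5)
Your proposal is correct and follows essentially the same route as the paper, which offers no separate proof at all: it simply declares the corollary a byproduct of the construction in the proof of Proposition \ref{P11}, where the minimizer $u_\mu$ of the truncated functional is shown to satisfy $u_\mu\in[0,u_\lambda]$, $u_\mu\neq 0$, $u_\mu\in S(\mu)\subseteq D_+$. Your extra contradiction argument showing $u_\mu\neq u_\lambda$ (subtracting the two equations to get $(\lambda-\mu)f(\cdot,u_\lambda(\cdot))\equiv 0$ and contradicting the lower bound $f(z,x)\geq\eta_{s}>0$ from $H(f)\,(ii)$, valid since $u_\lambda\in D_+$ is bounded away from zero on $\overline{\Omega}$) is a correct and welcome justification of a point the paper leaves implicit.
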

	
	We can improve the conclusion of this corollary.
	
	\begin{prop}
		\label{P13} If hypotheses $\widetilde{H}$ hold, $\lambda \in \mathcal{L}$, $u_\lambda \in S(\lambda) \subseteq D_+$ and $0 < \mu < \lambda$, then $\mu \in \mathcal{L}$ and we can find $u_\mu \in S(\mu) \subseteq D_+$ such that $u_\lambda - u_\mu \in {\rm int}\, \widehat{C}_+ $.
	\end{prop}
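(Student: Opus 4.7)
My plan is to sharpen Corollary \ref{C12} by applying the strong comparison principle in Proposition \ref{P8} to the pair $(u_\mu, u_\lambda)$ produced there. From Corollary \ref{C12} I already have $u_\mu \in S(\mu) \subseteq D_+$ with $u_\lambda - u_\mu \in C_+ \setminus \{0\}$, i.e.\ $u_\mu \leq u_\lambda$. The aim is to verify the ``strict gap'' hypothesis of Proposition \ref{P8} so as to deduce $u_\lambda - u_\mu \in \mbox{int}\, \widehat{C}_+$.

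Since $u_\lambda \in D_+$ and $\overline{\Omega}$ is compact, there exists $\delta_0 > 0$ with $u_\lambda(z) \geq \delta_0$ for all $z \in \overline{\Omega}$, and hypothesis $H(f)(ii)$ then supplies $\eta_{\delta_0} > 0$ such that $f(z, u_\lambda(z)) \geq \eta_{\delta_0}$ for a.a.\ $z \in \Omega$. Setting $\rho := \|u_\lambda\|_\infty$ and invoking $\widehat{H}_0(ii)$ with the bounded set $B = \{\mu\}$, I obtain $\widehat{\xi}_\rho > 0$ for which the map $x \mapsto \mu f(z, x) + g(z, x) + \widehat{\xi}_\rho\, x^{p-1}$ is nondecreasing on $[0, \rho]$ for a.a.\ $z \in \Omega$.

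I then rewrite the equations for $u_\mu$ and $u_\lambda$ by adding $\widehat{\xi}_\rho\, u^{p-1}$ to both sides, obtaining
\begin{align*}
& - \mbox{div}\, a(\nabla u_\mu) + [\xi(z) + \widehat{\xi}_\rho]\, u_\mu^{p-1} = h_1(z), \\
& - \mbox{div}\, a(\nabla u_\lambda) + [\xi(z) + \widehat{\xi}_\rho]\, u_\lambda^{p-1} = h_2(z),
\end{align*}
with $h_1(z) := \mu f(z, u_\mu) + g(z, u_\mu) + \widehat{\xi}_\rho\, u_\mu^{p-1}$ and $h_2(z) := \lambda f(z, u_\lambda) + g(z, u_\lambda) + \widehat{\xi}_\rho\, u_\lambda^{p-1}$; both lie in $L^\infty(\Omega)$ by $H(f)(i)$ and $H(g)(i)$. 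Splitting $\lambda f(z, u_\lambda) = (\lambda - \mu) f(z, u_\lambda) + \mu f(z, u_\lambda)$ and using the monotonicity property on $[0, \rho]$ with $u_\mu \leq u_\lambda \leq \rho$,
\[
h_2(z) - h_1(z) \geq (\lambda - \mu)\, f(z, u_\lambda(z)) \geq (\lambda - \mu)\, \eta_{\delta_0} =: c_8 > 0 \quad \mbox{for a.a. } z \in \Omega.
\]
With $\widehat{\xi}(z) := \xi(z) + \widehat{\xi}_\rho \geq 0$ in $L^\infty(\Omega)$, Proposition \ref{P8} applies and delivers $u_\lambda - u_\mu \in \mbox{int}\, \widehat{C}_+$, as claimed.

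The main obstacle is producing the pointwise strict lower bound $h_2 - h_1 \geq c_8 > 0$: since neither $f(z, \cdot)$ nor $g(z, \cdot)$ is assumed to be monotone, this forces us to absorb the non-monotone part into the potential via $\widehat{H}_0(ii)$ before exploiting the parameter gap $\lambda - \mu > 0$ in conjunction with the positive lower bound $u_\lambda \geq \delta_0$ furnished by $u_\lambda \in D_+$ and $H(f)(ii)$.
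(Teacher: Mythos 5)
Your proposal is correct and follows essentially the same route as the paper: starting from Corollary \ref{C12}, perturbing both equations by $\widehat{\xi}^B_\rho\,u^{p-1}$, using the monotonicity from $\widehat{H}_0\,(ii)$ together with $u_\mu \leq u_\lambda \leq \rho$ to absorb the nonmonotone part, and extracting the uniform gap $(\lambda-\mu)\,\eta_{s}>0$ from $u_\lambda \in D_+$ and $H(f)\,(ii)$ before invoking Proposition \ref{P8}. The only (immaterial) differences are your choice $B=\{\mu\}$ instead of $B=[\mu,\lambda]$ and a notational clash of your lower bound $\delta_0$ with the constant of the same name in $H(f)\,(iii)$.
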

	
	\begin{proof}
		From Corollary \ref{C12}, we already know that $\mu \in \mathcal{L}$ and we can find $u_\mu \in S(\mu) \subseteq D_+$ such that 
		\begin{equation} \label{eq23} u_\lambda - u_\mu \in C_+ \setminus \{0\}. \end{equation}
		
		Let $\rho=\|u_\lambda\|_\infty$, $B=[\mu,\lambda]$ and let $\widehat{\xi}^B_\rho>0$ as postulated by hypothesis $\widehat{H}_0 \, (ii)$. We have 
		\begin{align} \nonumber &
			- \mbox{div}\, a(\nabla  u_\mu(z)) + [\xi(z) + \widehat{\xi}^B_\rho]  u_\mu(z)^{p-1} \\ \nonumber & = \mu f(z,  u_\mu(z)) + g(z,  u_\mu(z))+ \widehat{\xi}^B_\rho   u_\mu(z)^{p-1} \\ \nonumber & \leq \mu f(z,  u_\lambda(z)) + g(z,  u_\lambda(z))+ \widehat{\xi}^B_\rho   u_\lambda(z)^{p-1} \quad \mbox{(see \eqref{eq23} and hypothesis $\widehat{H}_0 \, (ii)$)}\\ \label{eq24} & = \lambda f(z,  u_\lambda(z)) + g(z,  u_\lambda(z))+ \widehat{\xi}^B_\rho   u_\lambda(z)^{p-1} +[\mu-\lambda]f(z,  u_\lambda(z)) \quad \mbox{for a.a. }z \in \Omega.
		\end{align}
		
		Recall that $u_\lambda \in D_+$. Therefore $s_\lambda = \min_{\overline{\Omega}} u_\lambda >0$. Then using hypothesis $H(f) \, (iii)$, we have 
		\begin{equation}
		\label{eq25} f(z,  u_\lambda(z)) \geq \eta_{s_\lambda}
		>0 \quad \mbox{for a.a. }z \in \Omega.\end{equation}
		
		Using \eqref{eq25} in \eqref{eq24} and recalling that $\mu < \lambda$, we obtain
		\begin{align*} &
			- \mbox{div}\, a(\nabla  u_\mu(z)) + [\xi(z) + \widehat{\xi}^B_\rho]  u_\mu(z)^{p-1} \\  & \leq \lambda f(z,  u_\lambda(z)) + g(z,  u_\lambda(z))+ \widehat{\xi}^B_\rho   u_\lambda(z)^{p-1} +[\mu-\lambda] \eta_{s_\lambda}\\ & < 	- \mbox{div}\, a(\nabla  u_\lambda(z)) + [\xi(z) + \widehat{\xi}^B_\rho]  u_\lambda(z)^{p-1} \quad \mbox{for a.a. }z \in \Omega, \\ \Rightarrow \quad & u_\lambda - u_\mu \in {\rm int \ } \widehat{C}_+ \quad \mbox{(see Proposition \ref{P8})}.
		\end{align*}
	\end{proof}
	
	We set $\lambda^\ast = \sup \mathcal{L}$.
	
	\begin{prop}
		\label{P14} If hypotheses $\widetilde{H}$ hold, then $\lambda^\ast < +\infty$.
	\end{prop}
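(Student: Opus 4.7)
The plan is to argue by contradiction. Assume $\mathcal L$ is unbounded and pick a sequence $\lambda_n \to +\infty$ with corresponding $u_n := u_{\lambda_n} \in S(\lambda_n) \subseteq D_+$.

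The first step is to establish a pointwise lower bound on the reaction that can be driven to infinity by taking $\lambda$ large. By hypothesis $H(f)\,(ii)$ with $s = 1$, $f(z, x) \geq \eta_1 > 0$ for a.a.\ $z \in \Omega$ and all $x \geq 1$. By the $(p-1)$-superlinearity in $H(g)\,(ii)$, for any prescribed $K > 0$ there exists $M = M(K) \geq 1$ with $g(z, x) \geq K x^{p-1}$ for all $x \geq M$. A routine three-region splitting of $[0,\infty)$ into $[0,1]$, $[1,M]$ and $[M,\infty)$ (using nonnegativity on $[0,1]$, $\lambda f \geq \lambda \eta_1$ on $[1,M]$, and $g \geq K x^{p-1}$ on $[M,\infty)$) then yields, provided $\lambda \geq K M^{p-1}/\eta_1$,
\[
\lambda f(z, x) + g(z, x) \geq K\, x^{p-1} - K M^{p-1} \quad \text{for a.a. } z \in \Omega \text{ and all } x \geq 0.
\]

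Next, I test the weak formulation of the Robin problem satisfied by $u_n$ against $u_n$ itself. Lemma~\ref{Lem2}\,(c) bounds the operator term from below by $\tfrac{c_1}{p-1}\|\nabla u_n\|_p^p$, while Lemmata~\ref{Lem5}--\ref{Lem6} (applicable thanks to $H_0$) upgrade $\tfrac{c_1}{p-1}\|\nabla u_n\|_p^p + \int_\Omega \xi u_n^p\,dz + \int_{\partial\Omega}\beta u_n^p\,d\sigma$ to $C_0\,\|u_n\|^p$ for some $C_0 > 0$ independent of $n$ and $K$. Combining with the reaction lower bound above and H\"older's inequality $\|u_n\|_1 \leq |\Omega|^{1/p'}\|u_n\|_p$ gives
\[
C_0\,\|u_n\|^p \geq K\,\|u_n\|_p^p - K M^{p-1}\,|\Omega|^{1/p'}\,\|u_n\|_p.
\]

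The main obstacle is to convert this energy inequality into a genuine contradiction, since it alone permits $\|u_n\|$ to grow with $K$. For this, I would return to the same weak formulation and exploit the matching \emph{upper} estimates from hypotheses $H(f)\,(ii)$ and $H(g)\,(ii)$, namely $f(z, x) \leq \varepsilon x^{p-1} + c_\varepsilon$ (from sublinearity at $+\infty$) and $g(z, x) \leq \varepsilon x^{p^\ast - 1} + c_\varepsilon$ (from almost-critical growth), together with the Sobolev embedding $W^{1,p}(\Omega) \hookrightarrow L^{p^\ast}(\Omega)$. The real technical difficulty here is the noncompactness of the latter: one must absorb the critical term $\|u_n\|_{p^\ast}^{p^\ast}$ into the coercive side via the smallness factor $\varepsilon$ and Lemma~\ref{Lem2}\,(c). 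Once an upper bound of the form $C_0\,\|u_n\|^p \leq C_1\bigl(1 + \lambda_n\,\|u_n\|_p^p\bigr)$ is secured, comparing it with the lower bound displayed above and exploiting the freedom to choose $K$ arbitrarily large (at the cost of taking $\lambda_n$ just above the threshold $K M(K)^{p-1}/\eta_1$) forces an inconsistency for $K$ sufficiently large relative to $C_0$ and $C_1$, establishing $\lambda^\ast < +\infty$.
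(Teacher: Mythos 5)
Your proposal has a genuine gap and does not follow the paper's route. Two problems. First, a sign/direction slip: testing the equation against $u_n$ gives an \emph{equality} whose left-hand side you bound \emph{from below} by $C_0\|u_n\|^p$ (Lemmata \ref{Lem5}--\ref{Lem6}) and whose right-hand side you bound \emph{from below} by $K\|u_n\|_p^p-KM^{p-1}|\Omega|^{1/p'}\|u_n\|_p$; two lower bounds for the same quantity do not yield the displayed inequality. What you actually get is $\langle A(u_n),u_n\rangle+\int_\Omega\xi u_n^p\,dz+\int_{\partial\Omega}\beta u_n^p\,d\sigma\geq K\|u_n\|_p^p-\dots$, and to exploit this you need an \emph{upper} bound on the left (Lemma \ref{Lem2}\,(b)), which is fixable. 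Second, and decisively, the upper estimate $C_0\|u_n\|^p\leq C_1(1+\lambda_n\|u_n\|_p^p)$ that your scheme requires cannot be obtained: the term $\varepsilon\|u_n\|_{p^\ast}^{p^\ast}\leq \varepsilon S\|u_n\|^{p^\ast}$ is a \emph{higher} power of the norm than $\|u_n\|^p$, so no smallness of $\varepsilon$ absorbs it into the coercive side unless you already have an a priori bound on $\|u_n\|$ --- which is exactly what is unavailable here. Moreover, even granting that estimate, the two inequalities are mutually consistent: since $M=M(K)\geq 1$, the factor $KM^{p-1}\geq K$ appears on the large side of both, and substituting one into the other produces no contradiction. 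The root cause is the additive error $-KM^{p-1}$ in your reaction lower bound, which comes from using only nonnegativity on $[0,1]$.

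The paper avoids all of this by working pointwise rather than with energy integrals. Using the partial concavity hypothesis $H(f)\,(iii)$ (on $\overline{\mathcal U}$, $f(z,x)\geq c_9x^{q-1}\geq c_9x^{p-1}$ for $0\leq x\leq\delta_0\leq 1$ since $q<p$), together with $H(f)\,(ii)$ on the middle range and $H(g)\,(ii)$ for large $x$, it produces a $\widehat\lambda>0$ with $\widehat\lambda f(z,x)+g(z,x)\geq\mu x^{p-1}$ for a.a.\ $z\in\overline{\mathcal U}$ and \emph{all} $x\geq0$, with $\mu>\|\xi\|_\infty$ and \emph{no subtracted constant}. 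Then for $\lambda>\widehat\lambda$ and a putative $u_\lambda\in S(\lambda)\subseteq D_+$, it compares $u_\lambda$ with the constant function $m_\lambda+\delta$, where $m_\lambda=\min_{\overline{\mathcal U}}u_\lambda>0$, via the strong comparison principle (Proposition \ref{P8}); the conclusion $u_\lambda-(m_\lambda+\delta)\in{\rm int}\,\widehat C_+(\overline{\mathcal U})$ contradicts the minimality of $m_\lambda$. If you want to salvage an argument along your lines, you must use $H(f)\,(iii)$ to kill the additive error near $x=0$ and then switch from integral estimates to a comparison at the minimum point.
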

	
	\begin{proof}
		Let $\mu > \|\xi\|_\infty$ (see hypothesis $H(\xi)$). We claim that we can find $\widehat{\lambda}>0$ such that
		\begin{equation}
		\label{eq26} \widehat{\lambda}f(z,x)+g(z,x) \geq \mu x^{p-1} \quad \mbox{for a.a. $z \in \overline{\mathcal{U}}$
		and
		 all $x \geq 0$.}
		\end{equation}
		
		To this end, notice that for any $\lambda >0$ on account of hypothesis $H(f) \, (iii)$ we have 
		\begin{equation}
		\label{eq27} \lambda f(z,x) \geq  \mu x^{p-1} \quad \mbox{for a.a. $z \in \overline{\mathcal{U}}$, all $0 \leq x \leq \widehat{\delta} \leq \delta_0$ (recall $q<p$ and $\delta_0 \leq 1$).}
		\end{equation}
		
		Also, hypothesis $H(g) \, (ii)$ implies that we can find $M_1>0$ such that 
		\begin{equation}
		\label{eq28}   g(z,x) \geq  \mu x^{p-1} \quad \mbox{for a.a. $z \in \Omega$
		and
		 all $x \geq M_1$}.
		\end{equation}
		
		According to  hypothesis $H(f) \, (ii)$, we have 
		\begin{equation}
		\label{eq29} \lambda f(z,x) \geq  \lambda \eta_{\widehat{\delta}} \quad \mbox{for a.a. $z \in \Omega$
		and
		 all $x \geq \widehat{\delta}$}.
		\end{equation}
		
		Choose $\widehat{\lambda} >0$ such that
		\begin{equation}
		\label{eq30} \lambda \eta_{\widehat{\delta}} \geq \mu M_1^{p-1}.
		\end{equation}
		
		Then from \eqref{eq27}, \eqref{eq28}, \eqref{eq29}, \eqref{eq30} and since $f,g \geq 0$, we conclude that \eqref{eq26} is true.
		
		Now let $\lambda > \widehat{\lambda}$ and assume that $\lambda \in \mathcal{L}$. Then we can find $u_\lambda \in S(\lambda) \subseteq D_+$.
		We set $m_\lambda = \min_{\overline{\mathcal{U}}} u_\lambda >0$. For $\delta >0$ let $m_\lambda^\delta =m_\lambda + \delta$. We set $\rho = \|u_\lambda \|_\infty$, $B= \{\lambda\}$ and consider $\widehat{\xi}^B_\rho >0$ as postulated by hypothesis $\widehat{H}_0 \, (ii)$. We have 
		\begin{align} &
			\nonumber	- \mbox{div}\, a(\nabla  m_\lambda^\delta) + [\xi(z) + \widehat{\xi}^B_\rho]  (m_\lambda^\delta)^{p-1} \\ \nonumber & =  [\xi(z) + \widehat{\xi}^B_\rho]  (m_\lambda^\delta)^{p-1}\\ \nonumber & \leq [\xi(z) + \widehat{\xi}^B_\rho]  m_\lambda^{p-1} + \chi(\delta) \quad \mbox{ with $\chi(\delta) \to 0^+$ as $\delta \to 0^+$}\\ \nonumber & <   [\mu + \widehat{\xi}^B_\rho]  m_\lambda^{p-1} + \chi(\delta) \quad \mbox{ (recall that $\mu >\|\xi\|_\infty$)}\\  & \leq \widehat{\lambda} f(z,  m_\lambda ) + g(z,  m_\lambda )+ \widehat{\xi}^B_\rho   m_\lambda ^{p-1} +\chi(\delta)
			\quad \mbox{(see \eqref{eq26})} \nonumber  \\ \nonumber & = \lambda f(z,  m_\lambda) + g(z,  m_\lambda)+ \widehat{\xi}^B_\rho   m_\lambda^{p-1} +[\widehat{\lambda}-\lambda] f(z,m_\lambda)+\chi(\delta)\\ & \leq \lambda f(z,  m_\lambda) + g(z,  m_\lambda)+ \widehat{\xi}^B_\rho   m_\lambda^{p-1} +[\widehat{\lambda}-\lambda] \eta_{m_\lambda}+\chi(\delta) \label{eq31}\\ \nonumber & \hskip 4cm \quad \mbox{(see hypothesis $H(f) \, (ii)$ and recall that $\widehat{\lambda}<\lambda$)}.
		\end{align}
		
		Since $\chi(\delta) \to 0^+$ as $\delta \to 0^+$, for $\delta >0$ small we have
		\begin{equation}
		\label{eq32} \chi(\delta)   < [\lambda-\widehat{\lambda}] \eta_{m_\lambda} \quad \mbox{(recall
		that
		 $\widehat{\lambda}<\lambda$)}.
		\end{equation}
		
		Using \eqref{eq32} in \eqref{eq31}, we see that for $\delta >0$ small, we have
		\begin{align*} &
			- \mbox{div}\, a(\nabla  m_\lambda^\delta) + [\xi(z) + \widehat{\xi}^B_\rho]  (m_\lambda^\delta)^{p-1} \\ \nonumber & < \lambda f(z,  u_\lambda(z)) + g(z,  u_\lambda(z))+ \widehat{\xi}^B_\rho   u_\lambda(z)^{p-1}\\ & \hskip 4cm \mbox{(see hypothesis $\widehat{H}_0 \, (ii)$ and recall
			that 
			 $m_\lambda = \min_{\overline{\mathcal{U}}} u_\lambda$)}\\ & = 	- \mbox{div}\, a(\nabla u_\lambda(z)) + [\xi(z) + \widehat{\xi}^B_\rho]  u_\lambda(z)^{p-1} \quad \mbox{for a.a. }z \in \overline{\mathcal{U}}\\ \Rightarrow \quad & u_\lambda - m_\lambda^\delta \in {\rm int \ } \widehat{C}_+(\overline{\mathcal{U}}) \quad \mbox{for $\delta >0$ small (see Proposition \ref{P8}).}
		\end{align*}
		
		This contradicts the fact that $m_\lambda = \min_{\overline{\mathcal{U}}} u_\lambda$. It follows that $\lambda \not \in \mathcal{L}$ and so we conclude that $\lambda^\ast \leq \widehat{\lambda} < +\infty$.
	\end{proof}	

\begin{prop}\label{prop15}
	If hypotheses $\widetilde{H}$ hold and $0< \lambda <\lambda^*$, then problem \eqref{PL} has at least two positive solutions $u_0, \widehat{u} \in D_+$, $u_0 \neq \widehat{u}$.
\end{prop}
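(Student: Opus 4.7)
The plan is to produce the first positive solution $u_0 \in D_+$ as a local minimizer of the Euler functional of \eqref{PL}, and the second positive solution $\widehat u \in D_+$ via the mountain pass theorem (Theorem \ref{T1}). Throughout I write
\[
\varphi_\lambda(u) = \int_\Omega \widehat{G}(\nabla u)\,dz + \frac{1}{p}\int_\Omega \xi(z)|u|^p\,dz + \frac{1}{p}\int_{\partial\Omega}\beta(z)|u|^p\,d\sigma - \int_\Omega[\lambda F(z,u) + G(z,u)]\,dz
\]
for the energy of \eqref{PL}.

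Since $\lambda < \lambda^\ast = \sup \mathcal{L}$ and $\mathcal{L}$ is an interval by Proposition \ref{P11}, I can pick $\mu \in (\lambda,\lambda^\ast) \cap \mathcal{L}$ and some $u_\mu \in S(\mu) \subseteq D_+$. Proposition \ref{P13} then furnishes $u_0 \in S(\lambda) \subseteq D_+$ with $u_\mu - u_0 \in {\rm int}\,\widehat{C}_+$. To see that $u_0$ is a local $C^1(\overline\Omega)$-minimizer of $\varphi_\lambda$, I would truncate the reaction at $u_\mu$ exactly as in \eqref{eq20} and consider the corresponding $\widehat\varphi_\lambda$, which is coercive and sequentially weakly lower semicontinuous. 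Its global minimizer, produced by the Weierstrass-Tonelli theorem, belongs to $[0,u_\mu]$ and solves \eqref{PL}; I identify $u_0$ with that minimizer. Because $u_\mu - u_0 \in {\rm int}\,\widehat{C}_+$ and $u_0 \in D_+$, a small $C^1(\overline\Omega)$-ball around $u_0$ sits inside $[0,u_\mu]$, and on this ball $\widehat\varphi_\lambda$ and $\varphi_\lambda$ agree up to an additive constant. Hence $u_0$ is a local $C^1(\overline\Omega)$-minimizer of $\varphi_\lambda$, and Proposition \ref{P7} upgrades it to a local $W^{1,p}(\Omega)$-minimizer.

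For the second solution I would work with the positive truncation $\varphi_\lambda^+$ obtained from $\varphi_\lambda$ by replacing $F(z,u),G(z,u)$ with $F(z,u^+),G(z,u^+)$, so that critical points of $\varphi_\lambda^+$ are nonnegative elements of $K_{\varphi_\lambda}$. I then invoke the standard dichotomy for local minimizers: either $u_0$ is a strict local minimizer of $\varphi_\lambda^+$ at some radius $\rho>0$, or there is already a second distinct critical point in every $W^{1,p}(\Omega)$-neighborhood of $u_0$, in which case the proposition is immediate. In the first case the mountain pass geometry is completed by hypothesis $H(g)\,(ii)$, which forces $\varphi_\lambda^+(tu_0) \to -\infty$ as $t \to +\infty$, so $u_1 := t u_0$ for large $t$ serves as the far endpoint. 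Theorem \ref{T1} then produces $\widehat u \in K_{\varphi_\lambda^+}$ with $\varphi_\lambda^+(\widehat u) \geq m_\rho > \varphi_\lambda^+(u_0)$, so $\widehat u \neq u_0$. Testing $(\varphi_\lambda^+)'(\widehat u)=0$ against $-\widehat u^-$ gives $\widehat u \geq 0$, and the regularity and maximum principle arguments from the proof of Proposition \ref{P9} finally place $\widehat u \in D_+$.

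The main obstacle is verifying that $\varphi_\lambda^+$ satisfies the Cerami condition despite the almost critical growth of $g$, since $W^{1,p}(\Omega)\hookrightarrow L^{p^\ast}(\Omega)$ is not compact. I would first exploit hypothesis $\widehat H_0\,(i)$ (a weak substitute for the Ambrosetti-Rabinowitz condition) to bound a Cerami sequence $\{u_n\} \subseteq W^{1,p}(\Omega)$: combining boundedness of $\varphi_\lambda^+(u_n)$ and $(1+\|u_n\|)(\varphi_\lambda^+)'(u_n) \to 0$ with $\widehat H_0\,(i)$ controls $\int_\Omega e_\lambda(z,u_n)\,dz$ and rules out blow-up along the superlinear direction. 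Once boundedness is known, passing to $u_n \xrightarrow{w} u$, the almost critical condition $\lim_{x\to+\infty} g(z,x)/x^{p^\ast-1}=0$ (the actual meaning of ``almost'' here) together with uniform integrability of the subcritical terms reduces the problem to showing $\limsup_n \langle A(u_n), u_n - u\rangle \leq 0$, after which the $(S)_+$-property from Proposition \ref{prop4} delivers strong convergence.
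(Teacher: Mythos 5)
Your construction of the first solution $u_0$ is essentially the paper's and is sound: you truncate the reaction at a solution for a larger parameter, minimize the coercive truncated functional, and use the strong comparison from Proposition \ref{P13} to see that a full $C^1(\overline{\Omega})$-neighborhood of $u_0$ lies in the order interval on which the truncated functional coincides with $\varphi_\lambda$, so that Proposition \ref{P7} upgrades $u_0$ to a local $W^{1,p}(\Omega)$-minimizer. (The paper pins $u_0$ inside a two-sided interval $[u_{\lambda_1},u_{\lambda_2}]$, but your one-sided interval $[0,u_\mu]$ works just as well for this step because $u_0\in D_+$.)

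The gap is in the second solution. Since $f(z,\cdot)$ and $g(z,\cdot)$ vanish on $(-\infty,0]$, your ``positive truncation'' $\varphi_\lambda^+$ is just $\varphi_\lambda$ itself, and $u=0$ is a critical point of it at level $\varphi_\lambda(0)=0$. The mountain pass theorem only gives $\widehat u\in K_{\varphi_\lambda}$ with $\varphi_\lambda(\widehat u)=c\geq m_\rho>\varphi_\lambda(u_0)$; since $\varphi_\lambda(u_0)<0$, nothing prevents $m_\rho\leq 0$, hence possibly $c=0$ and $\widehat u=0$. You cannot rescue this by showing $m_\rho>0$: the partially concave term in $H(f)\,(iii)$ forces $\varphi_\lambda$ to take negative values arbitrarily close to the origin (this is exactly the computation with $t\widehat h$ in the proof of Proposition \ref{P10}), so $0$ is not even a local minimizer and there is no sign control on the mountain pass level. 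This is precisely why the paper does not run the mountain pass on $\varphi_\lambda$ but on the functional $\tau_\lambda$ obtained by truncating the reaction from below at $u_{\lambda_1}\in S(\lambda_1)\subseteq D_+$ with $\lambda_1<\lambda$: then $K_{\tau_\lambda}\subseteq [u_{\lambda_1})\cap D_+$ (see \eqref{eq38}), the trivial function is not a critical point, and the mountain pass output is automatically a positive solution of \eqref{PL}, distinct from $u_0$ by its energy level. You would need to import this lower truncation (or some other device excluding the trivial critical point) to close the argument. Separately, your treatment of the Cerami condition is only a sketch of the easy half: the hard step is boundedness of the Cerami sequence without the Ambrosetti--Rabinowitz condition, and in the case where $y_n=u_n^+/\|u_n^+\|$ converges weakly to $0$ the paper needs the auxiliary functional $\tau_\lambda^*$ and the maximization trick $\tau_\lambda^*(t_nu_n)=\max_{0\leq t\leq 1}\tau_\lambda^*(tu_n)$ combined with hypothesis $\widehat H_0\,(i)$; asserting that $\widehat H_0\,(i)$ ``controls $\int_\Omega e_\lambda(z,u_n)\,dz$ and rules out blow-up'' does not yet carry out that argument.
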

\begin{proof}
	Let $0<\lambda_1 <\lambda <\lambda_2 < \lambda^*$. We know that $\lambda_1, \lambda_2 \in \mathcal{L}$. According to Proposition \ref{P13}, we can find $u_{\lambda_2}
	\in S(\lambda_2) \subseteq D_+$ and $u_{\lambda_1}
	\in S(\lambda_1) \subseteq D_+$ such that 
	$$u_{\lambda_2} - u_{\lambda_1} \in {\rm int}\, C_+.$$
	We consider the following truncation of the reaction in problem \eqref{PL} 

	\begin{equation}\label{eq33}
		\widehat{j}_\lambda(z,x)=\begin{cases}
			\lambda f(z,u_{\lambda_1}(z))+g(z,u_{\lambda_1}(z)) & \mbox{if $x <u_{ \lambda_1}(z)$},\\
			\lambda f(z,x)+g(z,x) &  \mbox{if $u_{ \lambda_1}(z) \leq x \leq u_{ \lambda_2}(z)$},\\
			\lambda f(z,u_{\lambda_2}(z))+g(z,u_{\lambda_2}(z)) &  \mbox{if $u_{\lambda_2}(z) <x$}.
		\end{cases}
	\end{equation} 
	This is a Carath\'{e}odory function. We set $\widehat{J}_\lambda(z,x)= \int_0^x \widehat{j}_\lambda(z,s)ds$ and consider the $C^1$-functional $\widehat{\tau}_\lambda:W^{1,p}(\Omega) \to \mathbb{R}$ defined by
	$$\widehat{\tau}_\lambda(u)= \int_\Omega \widehat{G}(\nabla u) dz +\frac{1}{p}\int_\Omega\xi(z) |u|^pdz + \frac{1}{p}\int_{\partial \Omega}\beta(z)|u|^pd\sigma -\int_\Omega\widehat{J}_\lambda(z,u)dz$$
	for all $u \in W^{1,p}(\Omega)$. 
	Evidently,  $\widehat{\tau}_\lambda(\cdot)$ is coercive (see \eqref{eq33} and Lemmata \ref{Lem5} and \ref{Lem6}) and sequentially weakly lower semicontinuous. So, we can find $u_0 \in W^{1,p}(\Omega)$ such that 
	\begin{align}\label{eq34}
		&\widehat{\tau}_\lambda(u_0) = \inf \{\widehat{\tau}_\lambda(u):\ u \in W^{1,p}(\Omega)\},\\
		\nonumber \Rightarrow \quad & \widehat{\tau}_\lambda^\prime(u_0) =0,\\
		\nonumber \Rightarrow \quad & \langle A(u_0),h \rangle + \int_\Omega\xi(z) |u_0|^{p-2}u_0hdz + \int_{\partial \Omega}\beta(z)|u_0|^{p-2}u_0hd\sigma =\int_\Omega\widehat{j}_\lambda(z,u_0)hdz\\ \nonumber
		& \hspace{8cm} \mbox{ for all $h \in W^{1,p}(\Omega)$}.
	\end{align}
	Choosing $h=(u_{\lambda_1} -u_0)^+ \in W^{1,p}(\Omega)$ and  $h=(u_0 - u_{\lambda_2})^+ \in W^{1,p}(\Omega)$ and reasoning as before, we obtain that
	\begin{align*}
		& u_0 \in [u_{\lambda_1}, u_{\lambda_2}],\\
		\Rightarrow \quad & u_0 \in S(\lambda) \subseteq D_+ \quad \mbox{ (see \eqref{eq33})}.
	\end{align*}
	In fact, using Proposition \ref{P8} (the strong comparison principle) as in the proof of  Proposition \ref{P13}, we obtain
	\begin{equation}\label{eq35}
		u_0 \in {\rm int}_{C^1(\overline{\Omega})}[u_{\lambda_1}, u_{\lambda_2}].
	\end{equation}
	Consider the following Carath\'{e}odory function
	\begin{equation}\label{eq36}
		j_\lambda(z,x)=\begin{cases}
			\lambda f(z,u_{\lambda_1}(z))+g(z,u_{\lambda_1}(z)) & \mbox{if $x \leq u_{\lambda_1}(z)$},\\
			\lambda f(z,x)+g(z,x) &  \mbox{if $u_{ \lambda_1}(z) < x$}.
		\end{cases}
	\end{equation} 
	We set $J_\lambda(z,x)= \int_0^x j_\lambda(z,s)ds$ and consider the $C^1$-functional $\tau_\lambda:W^{1,p}(\Omega) \to \mathbb{R}$ defined by
	$$\tau_\lambda(u)= \int_\Omega \widehat{G}(\nabla u) dz +\frac{1}{p}\int_\Omega\xi(z) |u|^pdz + \frac{1}{p}\int_{\partial \Omega}\beta(z)|u|^pd\sigma -\int_\Omega J_\lambda(z,u)dz$$
	for all $u \in W^{1,p}(\Omega)$. 
	
	From \eqref{eq33} and \eqref{eq36} it is clear that 
	$$\widehat{\tau}_\lambda \Big|_{[u_{\lambda_1}, u_{\lambda_2}]}= \tau_\lambda \Big|_{[u_{\lambda_1}, u_{\lambda_2}]}.$$
	From \eqref{eq34} and \eqref{eq35} we infer that 
	\begin{align}\label{eq37}
		\nonumber & u_0 \mbox{ is a local $C^1(\overline{\Omega})$-minimizer of $\tau_\lambda$},\\
		\Rightarrow \quad &  u_0 \mbox{ is a local $W^{1,p}(\Omega)$-minimizer of $\tau_\lambda$ (see Proposition \ref{P7})}.
	\end{align}
	Using \eqref{eq36}, we can easily check that 
	\begin{equation}\label{eq38}
		K_{\tau_\lambda} \subseteq [u_{\lambda_1} ) \cap D_+.
	\end{equation}
	So, we may assume that $K_{\tau_\lambda}$ is finite (otherwise we already have an infinity of positive solutions in $D_+$, see \eqref{eq36}). Then this property of $K_{\tau_\lambda}$ and \eqref{eq37} imply that we can find $\rho \in (0,1)$ small such that 
	\begin{equation}\label{eq39}
		\tau_\lambda(u_0) < \inf \{\tau_\lambda(u): \, \|u-u_0\|=\rho\}=m_\rho^\lambda
	\end{equation}
	(see Aizicovici-Papageorgiou-Staicu \cite{Ref1}, proof of Proposition 29).
	
	Given $u \in D_+$, on account of hypothesis $H(g)\,(ii)$, we have
	\begin{equation}\label{eq40}
		\tau_\lambda(tu) \to - \infty\quad \mbox{ as } \quad t\to + \infty.
	\end{equation}
	
	\medskip
	\noindent \underline{Claim}: $\tau_\lambda$ satisfies the $C$-condition.
	
	Let $\{ u_{n} \}_{n \in \mathbb{N}} \subseteq W^{1, p} (\Omega)$ be a sequence such that 
	\begin{equation} \label{eq41}
		|\tau_\lambda(u_n)| \leq M_2 \quad \mbox{for some } M_2>0
		 \mbox{ and  all } n \in \mathbb{N},\end{equation}
	\begin{equation} \label{eq42}
		(1 + \|u_{n}\|) \tau_\lambda^{\prime} (u_n) \rightarrow 0 \mbox{  in } W^{1, p} (\Omega) ^* \mbox{ as } n \to +\infty. 
	\end{equation}
	From \eqref{eq42} we have 
	\begin{align} \label{eq43} 
		\nonumber & |\langle \tau_\lambda^{\prime}(u_n), h \rangle |  \leq\frac{\varepsilon_n \|h\|}{1 + \|u_n\|}\quad \mbox{for all $h \in W^{1,p} (\Omega)$, with $\varepsilon_n \rightarrow 0^+$,}\\
		\nonumber \Rightarrow & \Big| \langle A(u_n), h \rangle + \int_\Omega \xi(z) |u_n|^{p - 2} u_n  h dz + \int_{\partial \Omega}\beta(z) |u_n|^{p-2} u_n  h d \sigma 
		-  \int_\Omega j_\lambda(z, u_n)  h dz \Big| \\ &
		\leq\frac{\varepsilon_n \|h\|}{1 + \|u_n\|}, \quad \mbox{for all $h \in W^{1,p} (\Omega)$, 
		 $n \in \mathbb{N}$.}
	\end{align}

	In \eqref{eq43} we choose $h = -u_n^- \in W^{1,p} (\Omega)$. Using Lemma \ref{Lem2}, we have  
	\begin{align}\label{eq44}
		& \nonumber \frac{c_1}{p-1}\|\nabla u_n^- \|^p_p + \int_\Omega \xi(z) (u_n^-)^{p} \, dz + \int_{\partial \Omega}\beta(z) (u_n^-)^{p}d \sigma  \leq c_{19} \|u_n^-\|\\ 
		& \nonumber \hspace{4cm} \mbox{for some $c_{19} >0$
		and
		  all } n \in \mathbb{N}
		\mbox{ (see \eqref{eq36})},\\
		\nonumber \Rightarrow \quad & \|u_n^-\|^{p-1} \leq c_{20} \mbox{ for some $c_{20}>0$
		and
		 all } n \in \mathbb{N} \mbox{ (see Lemmata \ref{Lem5} and \ref{Lem6})},\\
		\Rightarrow \quad & \{u_n^-\}_{n \in \mathbb{N}} \subseteq W^{1,p}(\Omega) \mbox{ is bounded}.
	\end{align}
	Using \eqref{eq44} in \eqref{eq41}, we obtain
	\begin{equation}\label{eq45}
		\int_\Omega p \widehat{G}(\nabla u_n)dz + \int_\Omega \xi(z) (u_n^+)^{p}  dz + \int_{\partial \Omega}\beta(z) (u_n^+)^{p}  d \sigma  -  \int_\Omega p[\lambda F(z, u_n^+)+ G(z, u_n^+)] dz \leq M_3,
	\end{equation} 
	for some $M_3 >0$
	 and all $n \in \mathbb{N}$ (see \eqref{eq36}). 
	
	On the other hand, if in \eqref{eq43} we choose $h = u_n^+ \in W^{1,p}(\Omega)$, then
	\begin{align}\label{eq46}
	&	-\int_\Omega (a(\nabla u_n^+),\nabla u_n^+)_{\mathbb{R}^N}dz - \int_\Omega \xi(z) (u_n^+)^{p}  dz - \int_{\partial \Omega}\beta(z) (u_n^+)^{p}  d \sigma \\ \nonumber &   +  \int_\Omega [\lambda f(z, u_n^+)+g(z, u_n^+)] u_n^+  dz \leq c_{21},   \mbox{for some $c_{21}>0$
	and
	 all $n \in \mathbb{N}$ (see \eqref{eq36}).}
	\end{align} 
	 We add \eqref{eq45} and \eqref{eq46} and using hypothesis $H(a)\, (iv)$, we obtain
	\begin{equation}
		\label{eq47} \int_\Omega e_\lambda(z,u_n^+)dz \leq M_4, \quad \mbox{for some } M_4>0 \ \ 
		 \mbox{and all } n \in \mathbb{N}.
	\end{equation}
	
	We will show that   $\{ u^+_{n} \}_{n \in \mathbb{N}} \subseteq W^{1, p} (\Omega)$ is bounded. Arguing by contradiction, suppose that 
	$\| u_{n}^+  \| \rightarrow + \infty$  as $n \rightarrow + \infty$.
	
	We set $y_n = \dfrac{u_n^+}{\| u_{n}^+  \| }$,   $n \in \mathbb{N}$. Then  $\| y_n \| = 1$, $y_n \geq 0$ for all $n \in \mathbb{N}$. We may assume that 
	\begin{equation} \label{eq48} y_n \xrightarrow{w} y \mbox{ in } W^{1, p} (\Omega) \mbox{ and } y_n \rightarrow y \mbox{ in } L^p(\Omega), \mbox{ and in } L^p(\partial \Omega), \, y \geq 0. 
	\end{equation}
	First, assume that $y \neq 0$ and let $\Omega_+=\{z \in \Omega : y(z) >0\}$. We have $|\Omega_+|_N>0$ (recall that $y \geq 0$, see \eqref{eq48}).  Then
	\begin{equation}\label{eq49} u_{n}^+(z)  \rightarrow + \infty \quad \mbox{for all } z \in \Omega_+.
	\end{equation}
	Hypotheses $H(f)\,(ii)$ and $H(g)\,(ii)$ imply that 
	\begin{equation}\label{eq50} 
		\lim_{x \to + \infty}\dfrac{F(z, x)}{x^{p}}=0 \mbox{ and } \to \lim_{x \to + \infty}\dfrac{G(z, x)}{x^{p}}=+\infty
		\mbox{ uniformly for a.a. } z \in \Omega.
	\end{equation}
	Then \eqref{eq49}, \eqref{eq50} imply that 
	$$\dfrac{F(z, u_n^+(z))}{\|u_n^+\|^{p}}\to 0 \quad \mbox{for a.a. } z \in \Omega_+,$$
	$$\dfrac{G(z, u_n^+(z))}{\|u_n^+\|^{p}}\to + \infty \quad \mbox{for a.a. } z \in \Omega_+.$$
	Using Fatou's lemma, we have
	\begin{align}\label{eq51} 
		\nonumber
		&\int_{\Omega_+}\dfrac{\lambda F(z, u_n^+)+G(z, u_n^+)}{\|u_n^+\|^{p}}dz \to +\infty \mbox{ as } n \to + \infty\\
		\Rightarrow \quad & \int_{\Omega}\dfrac{\lambda F(z, u_n^+)+G(z, u_n^+)}{\|u_n^+\|^{p}}dz \to +\infty \mbox{ as } n \to + \infty \mbox{ (recall $F,G \geq 0$)}.
	\end{align}
	Recall that from \eqref{eq41} and \eqref{eq44}, we have
	\begin{align}\label{eq52}
		\nonumber & \left| \int_\Omega p \widehat{G}(\nabla u_n^+)dz + \int_\Omega \xi(z) (u_n^+)^{p}  dz + \int_{\partial \Omega}\beta(z) (u_n^+)^{p}  d \sigma \right.\\
		& \hskip 1cm \left. -  \int_\Omega p[\lambda F(z, u_n^+)+ G(z, u_n^+)] dz\right| \leq M_5,  \mbox{for some $M_5>0$
		and  all $n \in \mathbb{N}$} \nonumber \\
		\nonumber\Rightarrow \ & \int_\Omega p[\lambda F(z, u_n^+)+ G(z, u_n^+)]dz \leq \int_\Omega p \widehat{G}(\nabla u_n^+)dz + \int_\Omega \xi(z) (u_n^+)^{p}  dz\\
		 & \hspace{6cm} + \int_{\partial \Omega}\beta(z) (u_n^+)^{p}  d \sigma + M_4
		\mbox{ for  all $n \in \mathbb{N}$} \nonumber \\
		\nonumber \Rightarrow \ & \int_\Omega \frac{p[\lambda F(z, u_n^+)+ G(z, u_n^+)]}{\|u_n^+\|^p} dz \leq \frac{1}{\|u_n^+\|^p} \int_\Omega p \widehat{G}(\nabla u_n^+)dz
		+ \int_\Omega \xi(z) y_n^{p}  dz\\
		& \hspace{4cm}  + \int_{\partial \Omega}\beta(z) y_n^{p}  d \sigma +\frac{M_4}{\|u_n^+\|^p} \mbox{ for all $n \in \mathbb{N}$}. 
	\end{align}
	
	Corollary \ref{cor3} and hypothesis $H(a)\,(iv)$ imply that 
	
	$$\widehat{G}(y) \leq c_{22} (|y|^q+|y|^p) \mbox{ for some $c_{22}>0$
	and  all $y \in \mathbb{R}^N$.}$$
	Therefore we have
	\begin{equation}\label{eq53}
		\frac{1}{\|u_n^+\|^p} \int _\Omega p \widehat{G}(\nabla u_n^+) dz \leq \frac{p\,c_{22}}{\|u_n^+\|^{p-q}} \|\nabla y_n\|^q_q+ p\,c_{22} \|\nabla y_n\|^p_p \leq M_6
	\end{equation}
	for some $M_6>0$, all $n \in \mathbb{N}$ (recall $p>q$).
	Returning to \eqref{eq52} and using \eqref{eq53}, we obtain 
	\begin{equation}\label{eq54}
		\int_\Omega \frac{p[\lambda F(z, u_n^+)+ G(z, u_n^+)]}{\|u_n^+\|^p} dz \leq M_7 \quad \mbox{for some $M_7>0$
		and all $n \in \mathbb{N}$}.
	\end{equation}
	Comparing \eqref{eq51} and \eqref{eq54}, we have a contradiction.
	
	Next, we assume that $y=0$. We introduce the $C^1$-functional
	$\tau_\lambda^*: W^{1,p}(\Omega) \to \mathbb{R}$ defined by
	
	$$\tau_\lambda^*(u)= \frac{c_1}{p(p-1)}\|\nabla u\|^p_p+\frac{1}{p} \int_\Omega\xi(z) |u|^pdz +\frac{1}{p} \int_{ \partial \Omega} \beta(z) |u|^pd\sigma -\int_\Omega J_\lambda(z,u)dz$$ 
	for all $u \in W^{1,p}(\Omega)$.

Let $k>0$ and define
$$v_n=(kp)^{1/p}y_n \in  W^{1,p}(\Omega) \quad \mbox{for all }n \in \mathbb{N}.$$

We have 
	\begin{equation} \label{eq55} v_n \xrightarrow{w} 0 \mbox{ in } W^{1, p} (\Omega) \mbox{ and } v_n \rightarrow 0 \mbox{ in } L^p(\Omega) \mbox{ and in } L^p(\partial \Omega), \mbox{ (see \eqref{eq48} and recall  $y=0$)}. 
\end{equation}

Hypotheses $H(f) \, (i), (ii)$ imply that
\begin{align*}
&0 \leq F(z,x) \leq c_{23}(1+x^{p-1}) \quad \mbox{for a.a. $z \in \Omega$, all $x \geq 0$, 
and
some $c_{23}>0$},\\ \Rightarrow \quad & 	\int_\Omega F(z,v_n) dz \to 0 \quad \mbox{(see \eqref{eq55})}.
\end{align*}	

Let $c_{24}=\sup_{n \in \mathbb{N}} \|v_n \|^{p^\ast}_{p^\ast}$ (see \eqref{eq55}). Hypotheses  $H(g) \, (i), (ii)$ imply that given $\varepsilon >0$, we can find $c_{25}=c_{25}(\varepsilon)>0$ such that
\begin{equation}
\label{eq56}0 \leq G(z,x) \leq \frac{\varepsilon}{2 c_{24}}x^{p^\ast}
+ c_{25} \quad \mbox{for a.a. $z \in \Omega$
and
 all $x \geq 0$.}\end{equation}

Let $E \subseteq \Omega$ be a measurable set with $|E|_N \leq  \dfrac{\varepsilon}{2 c_{25}}$. Then we have
$$\int_E G(z,v_n)dz \leq  \frac{\varepsilon}{2 c_{24}}\|v_n \|^{p^\ast}_{p^\ast} +c_{25}|\Omega|_N \leq \varepsilon \quad \mbox{for all $n \in \mathbb{N}$ (see \eqref{eq56})}.$$

Also, from \eqref{eq54} we see that 
$$\{N_G(v_n)\}_{n \in \mathbb{N}} \subseteq L^1(\Omega) \mbox{ is bounded}.$$

It follows that
\begin{equation}\label{eq57}\{N_G(v_n)\}_{n \in \mathbb{N}} \subseteq L^1(\Omega) \mbox{ is uniformly integrable}\end{equation}
(see Gasi\'nski-Papageorgiou \cite{Ref8}, Problem 1.6, p. 36).

From \eqref{eq55} and by passing to a subsequence if necessary, we can say that
\begin{align*}
&v_n(z) \to 0 \quad \mbox{for a.a. $z \in \Omega$},\\ 	
\Rightarrow \quad &G(z,v_n(z)) \to 0 \quad \mbox{for a.a. $z \in \Omega$},\\
\Rightarrow \quad & \int_ \Omega G(z,v_n)dz \to 0 \quad \mbox{as $n \to +\infty$},\\
\end{align*}	
using Vitali's Theorem (see Gasi\'nski-Papageorgiou \cite{Ref8}, p. 5), we have
\begin{equation*}
\label{eq58} \int_ \Omega [\lambda F(x,v_n)+G(z,v_n)]dz \to 0 \quad \mbox{as $n \to +\infty$}
\end{equation*}

Recall that $\|u_n^+\| \to +\infty$. So, we can find $n_0 \in \mathbb{N}$ such that
\begin{equation}
\label{eq59}0 < \frac{(kp)^{1/p}}{\|u_n^+\|} \leq 1 \quad \mbox{for all } n \geq n_0.
\end{equation}

Let $t_n \in [0,1]$ be such that
\begin{equation}
\label{eq60} \tau^\ast_\lambda(t_n u_n)=\max \{\tau^\ast_\lambda(t u_n): 0 \leq t \leq 1\}.
\end{equation}

It follows from \eqref{eq59} and \eqref{eq60} that
\begin{align*}
\tau^\ast_\lambda(t_n u_n)	& \geq \tau^\ast_\lambda(v_n)\\ & = \frac{c_1 k}{p-1}\|\nabla y_n\|^p_p+ k \left[\int_\Omega \xi(z) |y_n|^p dz + \int_{\partial \Omega} \beta(z) |y_n|^p d \sigma \right]- \int_\Omega J_\lambda(z,v_n)dz\\ & \geq kc_{26} -c_{27} \quad \mbox{for some $c_{26}, c_{27}>0$
 and all $n \geq n_0$ (see Lemmata \ref{Lem5} and \ref{Lem6}).}
\end{align*}	

Since $k>0$ is arbitrary, we infer that
\begin{equation}
\label{eq61}\tau^\ast_\lambda(t_nu_n) \to +\infty \quad \mbox{as } n \to +\infty.
\end{equation}

From the definition of $\tau^\ast_\lambda(\cdot)$ and Corollary \ref{cor3}, we have
$$\tau^\ast_\lambda(u) \leq \tau_\lambda(u) \quad \mbox{for all } u \in W^{1,p}(\Omega).$$

Therefore from \eqref{eq41} we have
\begin{equation}
\label{eq62}\tau^\ast_\lambda(u_n) \leq M_2 \quad \mbox{for all } n \in \mathbb{N}.
\end{equation}

Also, notice that
\begin{equation}
\label{eq63}\tau^\ast_\lambda(0) =0.
\end{equation}

Then \eqref{eq61}, \eqref{eq62}, \eqref{eq63} imply that we can find $n_1 \in \mathbb{N}$ such that 
\begin{equation}
\label{eq64}t_n \in (0,1) \quad \mbox{for all } n \geq n_1.
\end{equation}

It follows from \eqref{eq60} and \eqref{eq64} that 
\begin{align}\nonumber
& \frac{d}{dt} 	\tau^\ast_\lambda (tu_n)\Big|_{t=t_n}=0,\\ \nonumber \Rightarrow \quad &
\langle (\tau^\ast_\lambda)'(t_nu_n), t_nu_n \rangle=0 \quad \mbox{(by the Chain rule)},\\ \nonumber \Rightarrow \quad &
\frac{c_1 }{p-1}\|\nabla (t_n u_n)\|^p_p+ \int_\Omega \xi(z) |t_n u_n|^p dz + \int_{\partial \Omega} \beta(z) |t_n u_n|^p d \sigma = \int_\Omega j_\lambda(z,t_n u_n)(t_n u_n)dz \\ & \leq c_{28} + \int_\Omega [\lambda f(z, t_n u_n^+)+g(z,t_n u_n^+)](t_nu_n^+)dz \label{eq65}\\ \nonumber & \hskip 1cm \mbox{for some $c_{28}>0$
and 
 all $n \geq n_1$ (recall $f,g \Big|_{\Omega \times (-\infty,0]}=0$)}.
	\end{align}

By hypothesis $\widehat{H}_0 \, (i)$ and \eqref{eq64}, we have for $B=\{\lambda\}$
	\begin{align}
		\nonumber & \int_\Omega e_\lambda (z,t_n u_n^+)dz \leq \int_\Omega e_\lambda(z,u_n^+)dz + \|\eta_B\|_1 \leq M_4 + \|\eta_B\|_1  \mbox{ for all $n \geq n_1$ (see \eqref{eq47})},\\ \nonumber \Rightarrow \quad & \int_\Omega [\lambda f(z,t_n u_n^+)+g(z,t_n u_n^+)] (t_n u_n^+)dz\\ & \label{eq66}\leq M_8 + \int_\Omega p[\lambda F(z,t_n u_n^+)+G(z,t_n u_n^+)]dz  \mbox{ for some $M_8>0$
		and
		 all $n \geq n_1$.} 
		\end{align}

Returning to \eqref{eq65} and using \eqref{eq66}, we have 
\begin{align}\nonumber 
& \frac{c_1 }{p-1}\|\nabla (t_n u_n)\|^p_p+ \int_\Omega \xi(z) |t_n u_n|^p dz + \int_{\partial \Omega} \beta(z) |t_n u_n|^p d \sigma \\ \nonumber & \hskip 1cm  - \int_\Omega J_\lambda(z,t_n u_n^+)dz \leq M_9 \quad \mbox{for some $M_9>0$, all $n \geq n_1$ (see \eqref{eq36})},\\ \label{eq67} \Rightarrow \quad & p \tau^\ast_\lambda (t_n u_n) \leq M_9 \quad \mbox{for all } n \geq n_1.
\end{align}	

Comparing \eqref{eq61} and \eqref{eq67} we get a contradiction.

This proves that $\{u_n^+\}_{n \in \mathbb{N}} \subseteq W^{1,p}(\Omega)$ is bounded, therefore 
$$\{u_n\}_{n \in \mathbb{N}} \subseteq W^{1,p}(\Omega) \mbox{ is bounded (see \eqref{eq44})}.$$

We may assume that
		\begin{equation} \label{eq68} u_n \xrightarrow{w} u \mbox{ in } W^{1, p} (\Omega) \mbox{ and } u_n \rightarrow u \mbox{ in } L^p(\Omega) \mbox{ and in } L^p(\partial \Omega). 
	\end{equation}
	
Recall that
\begin{align}\nonumber
& 0 \leq f(z,x) \leq c_{28} [1+|x|^{p-1}] \quad \mbox{for a.a. $z \in \Omega$, all $x \in \mathbb{R}$,
and
 some $c_{28}>0$,}	\\ \label{eq69} \Rightarrow \quad & \int_{\{u_n \geq u_{\lambda_1}\}}f(z,u_n)(u_n-u)dz \to 0 \quad \mbox{(see \eqref{eq68})}.	
\end{align}	

As before, let $c_{29}=\sup_{n \in \mathbb{N}} \|u_n\|_{p^\ast}< +\infty$ (see \eqref{eq68}). Hypotheses $H(g) \, (i),(ii)$ imply that given $\varepsilon >0$, we can find $c_{30}>0$ such that
\begin{equation}
\label{eq70} g(z,x) \leq \frac{\varepsilon}{3 c_{29}^{p^\ast}} x^{p^\ast-1}+c_{30} \quad \mbox{for a.a. $z \in \Omega$
and
 all $x \geq 0$.}
\end{equation}

Suppose that $E \subseteq \Omega$ is measurable. We have 
\begin{align}\nonumber
& \left| \int_E g(z,u_n^+)(u_n-u)dz\right| \\ & \nonumber \leq \int_E |g(z,u_n^+)| |u_n-u|dz \\ & \label{eq71} \leq \frac{\varepsilon}{3 c_{29}^{p^\ast}} \int_\Omega (u_n^+)^{p^\ast -1}|u_n- u| dz + c_{30} \int_E |u_n -u| dz \quad \mbox{(see \eqref{eq70}).} 
\end{align}	

Notice that $(u_n^+)^{p^\ast -1} \in L^{(p^\ast)'}(\Omega)$ (recall $\dfrac{1}{p^\ast}+\dfrac{1}{(p^\ast)'}=1$) and $u_n - u \in L^{p^\ast}(\Omega).$

Using H\"{o}lder's inequality, we have 
\begin{equation}
\label{eq72} \frac{\varepsilon}{3 c_{29}^{p^\ast}} \int_\Omega (u_n^+)^{p^\ast -1}|u_n- u| dz \leq \frac{\varepsilon}{3 c_{29}^{p^\ast}} \|u_n^+\|^{p^\ast -1}_{p^\ast}\|u_n- u\|_{p^\ast} \leq \frac{2 \varepsilon}{3} \quad \mbox{for all } n \in \mathbb{N}.
\end{equation}

Assume that
$$|E|_N \leq \left[\frac{\varepsilon}{6 c_{30}c_{29}}\right]^{(p^\ast)'}
.$$	

Then we have
\begin{equation}
\label{eq73} c_{30}\int_\Omega |u_n - u|dz \leq \frac{\varepsilon}{3} \quad \mbox{for all } n \in \mathbb{N}.
\end{equation}

Returning to \eqref{eq71} and using \eqref{eq72}, \eqref{eq73}, we obtain
\begin{align} \nonumber &
	\left| \int_E g(z,u_n^+)(u_n-u) dz \right| \leq \varepsilon \quad \mbox{for all } n \in \mathbb{N},\\ \nonumber \Rightarrow \quad & \left\{g(\cdot,u_n^+(\cdot))(u_n-u)(\cdot)\right\}_{n \in \mathbb{N}} \subseteq L^1(\Omega) \mbox{ is uniformly integrable,}\\  \Rightarrow \quad & \left\{\chi_{\{u_n \geq u_{\lambda_1}\}}(\cdot)g(\cdot,u_n^+(\cdot))(u_n-u)(\cdot)\right\}_{n \in \mathbb{N}} \subseteq L^1(\Omega) \mbox{ is uniformly integrable.}\label{eq74}
	\end{align}

From \eqref{eq68} and by passing to a subsequence if necessary, we can have
\begin{equation}
\label{eq75} \chi_{\{u_n \geq u_{\lambda_1}\}}(z)g(z,u_n^+(z))(u_n-u)(z)\to 0 \quad \mbox{for a.a. $z \in \Omega$, as $n \to + \infty$}.
\end{equation}

Then \eqref{eq74}, \eqref{eq75} and Vitali's theorem, imply that
\begin{align} \nonumber & \int_{\{u_n \geq u_{\lambda_1}\}}g(z,u_n^+)(u_n-u)dz \to 0,\\ \Rightarrow \quad & \int_{\Omega}j_\lambda(z,u_n^+)(u_n-u)dz \to 0 \mbox{ (see \eqref{eq36}, \eqref{eq44}, \eqref{eq69}).}\label{eq76}
	\end{align}

Therefore if in  \eqref{eq43} we choose $h=u_n -u \in W^{1,p}(\Omega)$, pass to the limit as $n \to +\infty$ and use \eqref{eq44}, \eqref{eq68} and \eqref{eq76}, we obtain 
\begin{align*}&
\lim_{n \to +\infty} \langle A(u_n), u_n-u \rangle =0, \\ \Rightarrow \quad & u_n \to u \mbox{ in $W^{1,p}(\Omega)$ (see Proposition \ref{prop4}),} 	\\ \Rightarrow \quad & \tau_\lambda \mbox{ satisfies the $C$-condition.}
\end{align*}

This proves the claim.

Then \eqref{eq39}, \eqref{eq40} and the claim permit the use of Theorem \ref{T1} (the mountain pass theorem). So, we can find $\widehat{u} \in W^{1,p}(\Omega)$ such that
\begin{equation}
\label{eq77} \widehat{u} \in K_{\tau_\lambda} \subseteq [u_{\lambda_1}) \cap D_+ \quad \mbox{(see \eqref{eq38}),}
\end{equation}
\begin{equation}
\label{eq78} m_\rho^\lambda \leq \tau_\lambda(\widehat{u}) \quad \mbox{(see \eqref{eq39}).}
\end{equation}

From  \eqref{eq36}, \eqref{eq39},  \eqref{eq77}, \eqref{eq78} we conclude that $\widehat{u}$ is the second positive smooth solution of \eqref{PL} ($0 < \lambda < \lambda^\ast$) distinct from $u_0$.
		
\end{proof}

Next, we show that the critical parameter value $\lambda^* >0$ is admissible. In what follows,
 $\varphi_\lambda: W^{1,p}(\Omega) \to \mathbb{R}$ is the energy (Euler) functional for problem \eqref{PL} defined by
$$\varphi_\lambda(u)= \int_\Omega \widehat{G}(\nabla u) dz +\frac{1}{p}\int_\Omega\xi(z) |u|^pdz +$$ $$ \frac{1}{p}\int_{\partial \Omega}\beta(z)|u|^pd\sigma -\int_\Omega[\lambda F(z,u)+G(z,u)]dz,$$
for all $u \in W^{1,p}(\Omega)$.
Evidently, $\varphi_\lambda \in C^1(W^{1,p}(\Omega), \mathbb{R})$ for all $\lambda>0$.
\begin{prop}\label{prop16}
	If hypotheses $\widetilde{H}$ hold, then $\lambda^* \in \mathcal{L}$ and so $\mathcal{L}=(0,\lambda^*]$.
\end{prop}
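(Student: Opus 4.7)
The plan is to realize $\lambda^*$ as a limit of admissible parameters. Choose an increasing sequence $\{\lambda_n\}_{n\in\mathbb{N}}\subseteq\mathcal L$ with $\lambda_n\uparrow\lambda^*$, and for each $n$ select a positive solution $u_n\in S(\lambda_n)\subseteq D_+$. The objective is to extract a strong limit $u^*\in W^{1,p}(\Omega)$, identify it as a weak solution of $(P_{\lambda^*})$, and show $u^*\not\equiv 0$; combined with Proposition \ref{P9} and Proposition \ref{P11}, this yields $\lambda^*\in\mathcal L$ and $\mathcal L=(0,\lambda^*]$.

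The first task is a uniform $W^{1,p}(\Omega)$-bound for $\{u_n\}$. This is precisely the Cerami-type dichotomy carried out in the Claim inside the proof of Proposition \ref{prop15}: if $\|u_n\|\to+\infty$, the normalized sequence $y_n=u_n/\|u_n\|$ either has a nontrivial weak limit (excluded by the $(p-1)$-superlinearity of $g$ in $H(g)(ii)$ and Fatou's lemma) or weakly converges to zero (excluded by evaluating $\tau^*_{\lambda_n}$ on $(kp)^{1/p}y_n$ and invoking hypothesis $\widehat H_0(i)$), with all constants uniform because $\{\lambda_n\}\subseteq[\lambda_1,\lambda^*]$. Next, extract $u_n\xrightarrow{w}u^*$ in $W^{1,p}(\Omega)$, strongly in $L^p(\Omega)$ and $L^p(\partial\Omega)$. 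The almost critical term $\int_\Omega g(z,u_n)(u_n-u^*)\,dz$ is passed to the limit via H\"older's inequality, uniform integrability, and Vitali's theorem, exactly as in the closing steps of that Claim, yielding $\limsup_n\langle A(u_n),u_n-u^*\rangle\leq 0$; the $(S)_+$ property from Proposition \ref{prop4} upgrades this to $u_n\to u^*$ in $W^{1,p}(\Omega)$. Passing to the limit in the weak formulation of $(P_{\lambda_n})$, together with $\lambda_n\to\lambda^*$ and the Carath\'eodory continuity of $f,g$, shows that $u^*\geq 0$ is a weak solution of $(P_{\lambda^*})$.

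The main obstacle is ruling out $u^*\equiv 0$, since the previous two steps also go through trivially for $u_n\equiv 0$. The strategy is to arrange a uniform pointwise lower bound $u_n\geq u_0$ for a fixed $u_0\in D_+$ independent of $n$. Fix $\lambda_0\in(0,\lambda_1)\cap\mathcal L$ and $u_0\in S(\lambda_0)$; because $\lambda_0<\lambda_n$ and $f\geq 0$, the function $u_0$ is a strict subsolution of $(P_{\lambda_n})$ for every $n$. Pick $\widetilde\lambda_n\in(\lambda_n,\lambda^*)\cap\mathcal L$ and a solution $w_n\in S(\widetilde\lambda_n)$ with $w_n\geq u_0$, constructed by the sub--super-solution method with $u_0$ as lower barrier at $\widetilde\lambda_n$ (the required coercivity of the associated truncated functional is exactly the one used in Proposition \ref{prop15}). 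Minimizing $\widehat\tau_{\lambda_n}$ over the order interval $[u_0,w_n]$ then produces $u_n\in S(\lambda_n)$ with $u_0\leq u_n\leq w_n$ pointwise. Strong convergence from step two forces $u^*\geq u_0>0$ on $\overline\Omega$, so $u^*\not\equiv 0$. Proposition \ref{P9} then gives $u^*\in D_+$, so $\lambda^*\in\mathcal L$, and Proposition \ref{P11} completes $\mathcal L=(0,\lambda^*]$.
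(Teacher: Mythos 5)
Your overall strategy is the same as the paper's: approximate $\lambda^*$ from below by $\lambda_n\in\mathcal{L}$, bound the solutions $u_n\in S(\lambda_n)$ in $W^{1,p}(\Omega)$, upgrade to strong convergence via the $(S)_+$ property, and rule out $u^*\equiv 0$ by a fixed positive lower barrier (the paper arranges $\{u_n\}$ increasing, so $u^*\geq u_1\in D_+$; your $u_n\geq u_0$ plays the identical role). There are, however, two genuine gaps.

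First, the boundedness step. The ``Cerami-type dichotomy'' in the Claim of Proposition \ref{prop15} is not available for an arbitrary sequence of exact solutions, because it consumes an a priori bound on the energies: the case $y=0$ needs $\int_\Omega e_\lambda(z,u_n^+)\,dz\leq M_4$ (that is, \eqref{eq47}, used through \eqref{eq66}) and $\tau^*_\lambda(u_n)\leq M_2$ (that is, \eqref{eq62}, needed to force $t_n\in(0,1)$), and both of these are obtained by adding the inequality $\tau_\lambda(u_n)\leq M_2$ to the near-critical-point identity. Membership in $S(\lambda_n)$ supplies the identity but not the energy bound; nothing prevents $\varphi_{\lambda_n}(u_n)\to+\infty$ along an arbitrary selection. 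The paper closes this by \emph{selecting} $u_n$ with $\varphi_{\lambda_n}(u_n)<0$, which gives \eqref{eq79} and hence \eqref{eq82}, the substitute for \eqref{eq47}. Your own construction would in fact supply the missing ingredient, since $u_n$ minimizes a functional that agrees with $\varphi_{\lambda_n}$ on $[u_0,w_n]\ni u_0$, so that $\varphi_{\lambda_n}(u_n)\leq\varphi_{\lambda_n}(u_0)\leq\sup_{0<\lambda\leq\lambda^*}\varphi_\lambda(u_0)<+\infty$; but you never record this, and you present the boundedness as an automatic consequence of the dichotomy before the construction of the $u_n$ is even specified. As written, the step does not go through.

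Second, the upper barriers $w_n$. You claim that $w_n\in S(\widetilde\lambda_n)$ with $w_n\geq u_0$ is produced by truncating from below at $u_0$, asserting that ``the required coercivity of the associated truncated functional is exactly the one used in Proposition \ref{prop15}.'' This is false: truncating only from below leaves the almost critical term $g$ intact for large $x$, and the resulting functional is unbounded below --- this is precisely \eqref{eq40}. Every coercive truncated functional in the paper ($\psi_\lambda$, $\widehat{\varphi}_\mu$, $\widehat{\tau}_\lambda$) is cut off from \emph{above} by a known solution or supersolution. Hence producing $w_n\geq u_0$ requires an upper barrier at level $\widetilde\lambda_n$ dominating $u_0$, which is essentially the object you are trying to construct; as stated the step is circular. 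The paper's choice --- an increasing sequence obtained from Corollary \ref{C12} and the proof of Proposition \ref{P11}, with $\varphi_{\lambda_n}(u_n)<0$ --- delivers the lower barrier and the energy bound in one stroke, which is why it is the more economical route.
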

\begin{proof}
	Let $\{\lambda_n\}_{n \in \mathbb{N}} \subseteq (0,\lambda^*)$ such that $\lambda_n \to (\lambda^*)^-$ as $n \to + \infty$. We can find $u_n \in S(\lambda_n) \subseteq D_+$ for all $n \in \mathbb{N}$. In fact, from Corollary \ref{C12} and the proof of Proposition \ref{P11}, we see that can have $\{u_n\}_{n \in \mathbb{N}}$ increasing and 
	$$\varphi_{\lambda_n} (u_n) <0 \quad \mbox{ for all $n \in \mathbb{N}$}.$$
	Therefore we have 
	\begin{equation}\label{eq79}
		\int_\Omega p\widehat{G}(\nabla u_n) dz+\int_\Omega \xi(z) u_n^p dz +\int_{\partial \Omega}\beta(z) u_n^p d\sigma -p \int_\Omega [\lambda F(z,u_n)+G(z,u_n)]dz <0
	\end{equation}
	for all $n \in \mathbb{N}$. Also we have 
	\begin{align}\label{eq80}
		\nonumber &\langle A(u_n),h \rangle+ \int_\Omega \xi(z) u_n^{p-1}h dz +\int_{\partial \Omega}\beta(z) u_n^{p-1}h d\sigma\\
		= & \int_\Omega [\lambda_n f(z,u_n)+g(z,u_n)]hdz \mbox{ for all $h \in W^{1,p}(\Omega)$, $n \in \mathbb{N}$}.
	\end{align}
	Choosing $h=u_n \in W^{1,p}(\Omega)$ in \eqref{eq80}, we obtain
	\begin{align}\label{eq81}
		\nonumber - \int_\Omega (a(\nabla u_n),\nabla u_n)_{\mathbb{R}^N}dz -& \int_\Omega \xi(z) u_n^{p} dz -\int_{\partial \Omega}\beta(z) u_n^{p} d\sigma\\
		+ & \int_\Omega [\lambda_n f(z,u_n)+g(z,u_n)]u_ndz=0 \mbox{ for  all $n \in \mathbb{N}$}.
	\end{align}
	Adding \eqref{eq79}, \eqref{eq81} and using hypothesis $H(a) \,(iv)$, we have
	\begin{equation}\label{eq82}
		\int_\Omega e_{\lambda_n}(z,u_n) dz <0 \quad \mbox{ for  all $n \in \mathbb{N}$}.
	\end{equation}
	
	From \eqref{eq82} and reasoning as in the proof of Proposition \ref{prop15} (see the part of the proof after \eqref{eq47}), we obtain that 
	$$\{u_n\}_{n \in \mathbb{N}} \subseteq W^{1,p}(\Omega) \quad \mbox{ is bounded}.$$
	So, we may assume that
	\begin{equation}\label{eq83}
		u_n \xrightarrow{w} u^* \mbox{ in $W^{1,p}(\Omega)$ and } u_n \to u^* \mbox{ in $L^p(\Omega)$, and $L^p(\partial \Omega)$}.
	\end{equation}
	In \eqref{eq80} we choose $h=u_n-u^* \in W^{1,p}(\Omega)$ and pass to the limit as $n \to +\infty$. Then 
	\begin{align}\label{eq84}
		\nonumber & \lim_{n \to + \infty}\langle A(u_n),u_n-u^* \rangle =0 \mbox{ (see the part of the proof of Proposition \ref{P11} after \eqref{eq68})},\\
		\Rightarrow \ & u_n \to u^*  \mbox{ in $ W^{1,p}(\Omega)$ (see Proposition \ref{prop4})}.
	\end{align}
	In \eqref{eq80} we pass to the limit as $n \to +\infty$ and use \eqref{eq84}. Then 
	\begin{align*}
		& \langle A(u^*),h \rangle + \int_\Omega \xi(z) (u^*)^{p-1}h dz +\int_{\partial \Omega}\beta(z) (u^*)^{p-1}h d\sigma\\
		= & \int_\Omega [\lambda f(z,u^*)+g(z,u^*)]hdz \mbox{ for all $h \in W^{1,p}(\Omega)$},\\
		\Rightarrow \quad & u^* \in S(\lambda^*) \subseteq D_+.
	\end{align*}
	
	We conclude that $\lambda^* \in \mathcal{L}$ and so $\mathcal{L}=(0,\lambda^*]$.
\end{proof}

\begin{prop}\label{prop17}
	If hypotheses $\widetilde{H}$ hold and $\lambda \to 0^+$, then we can find $u_\lambda \in S(\lambda) \subseteq D_+$ such that $\|u_\lambda\|_{C^1(\overline{\Omega})} \to 0$ as $\lambda \to 0^+$.
\end{prop}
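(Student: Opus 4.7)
The plan is to refine the construction of Proposition \ref{P10} by letting the auxiliary parameter $\eta$ shrink with $\lambda$. In that proof the supersolution $\overline{u} = \overline{u}_\eta$ of \eqref{eq5} obeyed the bound $\|\overline{u}\|_\infty \leq c_{13}\,\eta^{1/(p-1)}$ from \eqref{eq7}, and the solution $u_\lambda$ was obtained by minimizing the truncated functional $\psi_\lambda$ inside $[0, \overline{u}]$. The compatibility between $\eta$ and $\lambda$ there amounted to $g(z,\overline{u}) \leq \eta/2$, which follows from $\eta$ being below a fixed threshold because $(p^\ast - 1)/(p-1) > 1$, together with $\lambda f(z,\overline{u}) \leq \eta/2$, i.e., $\lambda \leq \eta/(2c_{15})$.

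First I would couple $\eta$ to $\lambda$ via, for instance, $\eta(\lambda) := 2c_{15}\sqrt{\lambda}$: for every sufficiently small $\lambda$ both compatibility conditions are met, and $\eta(\lambda) \to 0^+$ together with $\lambda$. Setting $\overline{u}_\lambda := \overline{u}_{\eta(\lambda)}$, we obtain
$$\|\overline{u}_\lambda\|_\infty \leq c_{13}\,\eta(\lambda)^{1/(p-1)} \longrightarrow 0 \quad \text{as } \lambda \to 0^+.$$
Next, with $\overline{u}_\lambda$ as the truncation cap, the functional $\psi_\lambda$ is still coercive and sequentially weakly lower semicontinuous, so a minimizer $u_\lambda \in W^{1,p}(\Omega)$ exists; the tests $h = -u_\lambda^-$ and $h = (u_\lambda - \overline{u}_\lambda)^+$ from Proposition \ref{P10} still pin $u_\lambda$ into $[0, \overline{u}_\lambda]$. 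For the non-triviality, rather than invoke the scaled bump $t\widehat{h}$, I would test $\psi_\lambda$ on a suitable constant $c = c(\lambda) \in (0, \min_{\overline{\Omega}} \overline{u}_\lambda]$: on $\overline{\mathcal{U}}$ the partially concave lower bound $F(z,c) \geq c_9 c^q/q$ from $H(f)\,(iii)$ yields
$$\psi_\lambda(c) \leq \frac{c^p}{p}\Bigl[\int_\Omega \xi(z)\,dz + \int_{\partial\Omega}\beta(z)\,d\sigma\Bigr] - \frac{\lambda c_9 |\mathcal{U}|_N}{q}\,c^q,$$
which is strictly negative once $c^{p-q} < A\lambda$ for a suitable $A > 0$. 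Because $q < p$, taking $c = \min\bigl\{(A\lambda/2)^{1/(p-q)},\,\tfrac{1}{2}\min_{\overline{\Omega}}\overline{u}_\lambda,\,\delta_0\bigr\}$ makes this work for every small $\lambda$, so $\psi_\lambda(u_\lambda) \leq \psi_\lambda(c) < 0 = \psi_\lambda(0)$ and $u_\lambda \neq 0$. Proposition \ref{P9} then places $u_\lambda$ in $S(\lambda) \subseteq D_+$, with $\|u_\lambda\|_\infty \leq \|\overline{u}_\lambda\|_\infty \to 0$.

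To upgrade $L^\infty$-smallness to $C^1(\overline{\Omega})$-smallness I would invoke the nonlinear regularity theory of Lieberman \cite{Ref11}. Since $\|u_\lambda\|_\infty$ is uniformly bounded for small $\lambda$, hypotheses $H(f)\,(i)$ and $H(g)\,(i)$ yield a uniform $L^\infty$-bound on the right-hand side $\lambda f(z,u_\lambda) + g(z,u_\lambda)$; Lieberman's theorem then provides $\gamma \in (0,1)$ and $M > 0$, independent of small $\lambda$, with $\|u_\lambda\|_{C^{1,\gamma}(\overline{\Omega})} \leq M$. By the compact embedding $C^{1,\gamma}(\overline{\Omega}) \hookrightarrow C^1(\overline{\Omega})$, every sequence $\lambda_n \to 0^+$ admits a $C^1$-convergent subsequence whose limit must coincide with the $L^\infty$-limit $0$; as this forces $u_{\lambda_n} \to 0$ in $C^1(\overline{\Omega})$ along every subsequence, the full family converges: $\|u_\lambda\|_{C^1(\overline{\Omega})} \to 0$.

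The main obstacle is the joint calibration of $\eta(\lambda)$ with the non-triviality test: one must verify at once that $\overline{u}_\lambda$ shrinks, that the admissible $c(\lambda)$ fits beneath it, and that the partially concave $c^q$-gain on $\mathcal{U}$ overtakes the $c^p$-cost from the potential and boundary terms. All three quantities tend to zero, and it is precisely the strict inequality $q < p$ together with the lower bound $c_9 x^{q-1}$ on $\overline{\mathcal{U}}$ from $H(f)\,(iii)$ that keeps the balance in our favour. Once this bookkeeping is performed, the rest is a routine Lieberman-plus-Arzelà–Ascoli bootstrap.
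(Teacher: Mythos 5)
Your proposal is correct and follows the same basic strategy as the paper: send the auxiliary parameter $\eta$ of problem \eqref{eq5} to $0^+$ together with $\lambda$, and exploit the enclosure $0\leq u_\lambda\leq\overline{u}=\overline{u}(\eta)$ established in the proof of Proposition \ref{P10}. The paper's own argument is a two-line remark that does exactly this, but it leaves two things implicit that you make explicit, and one of them is a genuine improvement. First, you spell out an admissible coupling $\eta(\lambda)=2c_{15}\sqrt{\lambda}$ reconciling the two smallness requirements $\lambda f(z,\overline{u})<\eta/2$ and $g(z,\overline{u})<\eta/2$; the paper just asserts that small $\lambda$ works. Second, and more substantially, the paper concludes $u_\lambda\to0$ in $C^1(\overline{\Omega})$ directly from $\overline{u}(\eta)\to0$ in $C^1(\overline{\Omega})$ and the ordering $0\leq u_\lambda\leq\overline{u}(\eta)$; but the pointwise ordering only controls $\|u_\lambda\|_\infty$, not $\|\nabla u_\lambda\|_\infty$. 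Your route --- uniform $L^\infty$ bounds on the reaction, uniform $C^{1,\gamma}$ estimates from Lieberman's theory, the compact embedding $C^{1,\gamma}(\overline{\Omega})\hookrightarrow C^1(\overline{\Omega})$, and the subsequence principle to identify the limit as $0$ --- is the standard way to close this gap and is what the paper's terse proof should be read as invoking. Your replacement of the bump function $t\widehat{h}$ by a constant test function for the nontriviality of the minimizer is a harmless simplification (constants are admissible here since the problem is Robin/Neumann, and $H_0$ guarantees the coefficient of $c^p$ is positive). In short: same skeleton, but your write-up is the more complete one.
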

\begin{proof}
	From the proof of Proposition \ref{P10}, we know that for $\lambda >0$ small we can find $u_\lambda \in S(\lambda) \subseteq D_+$ such that
	$$u_\lambda \leq \overline{u}\quad \mbox{ (see \eqref{eq19}, \eqref{eq5})}.$$
	If $\eta \to 0^+$ (see \eqref{eq5}), then $\overline{u}=\overline{u}(\eta) \to 0$ in $C^1(\overline{\Omega})$ and so $u_\lambda \to 0$ in $C^1(\overline{\Omega})$.
\end{proof}

Summarizing, we can state the following theorem describing the dependence of the set of positive solutions on the parameter $\lambda>0$.
\begin{theorem}\label{T18}
	If hypotheses $\widetilde{H}$ hold, then there exists $\lambda^*>0$ such that
	\begin{itemize}
		\item[(a)] if $0< \lambda <\lambda^*$, then problem \eqref{PL} has at least two positive solutions $u_0, \widehat{u} \in D_+$, $u_0 \neq \widehat{u}$;
		\item[(b)] if $\lambda =\lambda^*$, then problem \eqref{PL} has at least one positive solution $u^* \in D_+$;
		\item[(c)] if $\lambda >\lambda^*$, then problem \eqref{PL} has no positive solutions;
		\item[(d)] if $\lambda \to 0^+$, then we can find positive solutions $u_\lambda \in D_+$ such that $\|u_\lambda\|_{C^1(\overline{\Omega})} \to 0$ as $\lambda \to 0^+$.
	\end{itemize}
\end{theorem}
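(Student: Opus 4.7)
The statement is a summary theorem that collates results already established in Propositions \ref{P9} through \ref{prop17}, so the plan is essentially to assemble the pieces and identify which previous result supplies each clause. Since $\mathcal{L}$ has already been shown to be nonempty (Proposition \ref{P10}), an interval (Proposition \ref{P11}), and bounded above (Proposition \ref{P14}), the number $\lambda^* = \sup \mathcal{L}$ is a well-defined positive real. No new analytic work is required; the task is bookkeeping.

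For part (a), I would simply invoke Proposition \ref{prop15}, which produces, for every $\lambda \in (0,\lambda^*)$, a local-minimizer solution $u_0 \in D_+$ (obtained by minimizing the truncated functional $\widehat{\tau}_\lambda$ between $u_{\lambda_1}$ and $u_{\lambda_2}$) and a second mountain-pass solution $\widehat{u} \in D_+$ distinct from $u_0$. For part (b), Proposition \ref{prop16} establishes that $\lambda^* \in \mathcal{L}$, which means $S(\lambda^*) \neq \emptyset$; by Proposition \ref{P9} any such solution belongs to $D_+$, furnishing $u^* \in D_+$. For part (c), Proposition \ref{prop16} asserts $\mathcal{L} = (0,\lambda^*]$, so every $\lambda > \lambda^*$ is outside $\mathcal{L}$ and $S(\lambda) = \emptyset$. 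For part (d), Proposition \ref{prop17} directly provides the family $u_\lambda \in S(\lambda) \subseteq D_+$ with $\|u_\lambda\|_{C^1(\overline{\Omega})} \to 0$ as $\lambda \to 0^+$.

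Because every ingredient is already proved, there is no real obstacle in this final step; the only thing to be careful about is logical consistency—in particular, confirming that the $\lambda^*$ appearing in Proposition \ref{prop15} (where the statement reads ``for all $0 < \lambda < \lambda^*$'') and in Proposition \ref{prop16} is literally the same number $\sup \mathcal{L}$ introduced before Proposition \ref{P14}. This is immediate from the definitions, so the proof reduces to a short paragraph citing the four propositions in sequence. I would therefore write the proof as: ``Set $\lambda^* = \sup \mathcal{L}$, which is finite and positive by Propositions \ref{P10} and \ref{P14}. Parts (a), (b), (c), (d) follow respectively from Propositions \ref{prop15}, \ref{prop16}, \ref{prop16}, and \ref{prop17}.'' No further calculation is needed.
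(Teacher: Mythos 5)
Your proposal is correct and matches the paper exactly: the paper presents Theorem \ref{T18} as a summary statement with no separate proof, its content being precisely the assembly of Propositions \ref{P10}, \ref{P14}, \ref{prop15}, \ref{prop16}, and \ref{prop17} with $\lambda^*=\sup\mathcal{L}$. Nothing further is needed.
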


\section*{Acknowledgments}

The first and the second author were supported in part by the Slovenian Research Agency  grants P1-0292,
J1-8131, J1-7025, N1-0064 and N1-0083. We thank the referee for comments and suggestions.

\end{document}